\definecolor{lightgray}{gray}{0.75}
\newcommand\greybox[1]{%
  \vskip\baselineskip%
  \par\noindent\colorbox{lightgray}{%
    \begin{minipage}{\textwidth}#1\end{minipage}%
  }%
  \vskip\baselineskip%
}
\newtheorem{claim}{Claim}
\newtheorem{conjecture}{Conjecture}
\newtheorem{conj}{Conjecture}
\newtheorem{theorem}{Theorem}
\newtheorem{corollary}{Corollary}
\newtheorem{lemma}{Lemma}
\newtheorem{result}{Result}
\theoremstyle{definition}
\newtheorem{procedure}{Procedure}
\newtheorem{remark}{Remark}
\newtheorem{definition}{Definition}
\newtheorem{example}{Example}
\begin{document}

\title[A conjectured formula for the rational $q,t$-Catalan polynomial]{A conjectured formula for the rational $q,t$-Catalan polynomial}

\author[G.~Hawkes]{Graham Hawkes}



\begin{abstract}
We conjecture a formula  for the rational $q,t$-Catalan polynomial $\mathcal{C}_{r/s}$ that is symmetric in $q$ and $t$ by definition.  The conjecture posits that  $\mathcal{C}_{r/s}$ can be written in terms of symmetric monomial strings indexed by maximal Dyck paths.  We show that for any finite $d^*$, giving a combinatorial proof of our conjecture on the infinite set of functions $\{ \mathcal{C}_{r/s}^d: r\equiv 1 \mod s, \,\,\, d \leq d^*\}$ is equivalent to a finite counting problem. 
\end{abstract}

\maketitle

\section{Introduction}

The rational $q,t$-Catalan polynomial is defined combinatorially in \cite{LW09} (definition 21 of section 7).  This is a generalization of the classical $q,t$-Catalan polynomial, the combinatorics of which had been  studied earlier in places such as \cite{GH02}, \cite{HA03}, \cite{HHLRU}, and \cite {HL05} (see also \cite{HA08}).  A  famous open problem in algebraic combinatorics is to give a combinatorial proof of the  symmetry in $q$ and $t$ of these polynomials. In this paper, we  conjecture and provide theoretical and computational evidence for  a formula (which is $q,t$-symmetric by definition) for the rational $q,t$-Catalan polynomial, and give combinatorial proofs of its correctness in certain cases (see Result \ref{res}).

More precisely, let $\mathbf{D}_{r/s}$ be the set of rational Dyck paths from $(0,0)$ to $(s,r)$ (that is, the set of integral lattice paths composed only of North and East steps from $(0,0)$ to $(s,r)$ that stay below the line between these two points). The rational $q,t$-Catalan polynomial is defined in \cite{LW09} as:
\begin{eqnarray*}
\mathcal{C}_{r/s}:=  \sum_{x \in \mathbf{D}_{r/s}} q^{area(x)} t^{dinv(x)}
\end{eqnarray*}
where $area$ and $dinv$\footnote{
The statistics of $area$ and $dinv$ appear in \cite{HA03} and are generalized to the rational case in \cite{LW09}.
In section 9 we give  a self-contained formulation of this conjecture  (see Conjecture \ref{con} of section 9).  The statistic of $dinv$ is replaced with the use of the statistic $degr$ in that section.  They are related by $degr=M-area-dinv$ where $M$ is the number of boxes fully contained in the triangle $(0,0), (s,0), (s,r)$.} are certain statistics on rational Dyck paths.  Then, writing $[a,b]_{q,t}=q^{a}t^{b}+q^{a+1}t^{b-1}+\cdots+q^{b-1}t^{a+1}+q^{b}t^{a}$ and  $(a,b)_{q,t}=[a+1,b-1]_{q,t}$
\greybox{
\begin{conj}
Let $\mathbf{T}_{r/s} \subseteq \mathbf{D}_{r/s}$ denote the subset of paths that are maximal, where a path is defined as maximal if it passes as close as possible to the bounding diagonal.  Then
we have
\begin{eqnarray}\label{mon}
\mathcal{C}_{r/s}=\sum_{x \in \mathbf{T}_{r/s}^+} \big[area(x),dinv(x)\big]_{q,t} - \sum_{x \in \mathbf{T}_{r/s}^-} \big(dinv(x),area(x)\big)_{q,t} 
\end{eqnarray}
where if $x \in \mathbf{T}_{r/s}$ then $x \in \mathbf{T}_{r/s}^+$ if  $area(x)\leq dinv(x)$ and $x \in \mathbf{T}_{r/s}^-$ otherwise.
\end{conj}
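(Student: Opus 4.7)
The plan is to compare coefficients on both sides of (\ref{mon}) antidiagonal by antidiagonal in the total $q,t$-degree. Expanding the right-hand side, each maximal path $x \in \mathbf{T}_{r/s}^+$ contributes a palindromic string of unit-coefficient monomials running from $q^{area(x)}t^{dinv(x)}$ to $q^{dinv(x)}t^{area(x)}$ along the antidiagonal $a+b=area(x)+dinv(x)$, while each $x \in \mathbf{T}_{r/s}^-$ subtracts the strictly interior monomials on its antidiagonal. Since the footnote tells us that $area+dinv = M - degr$, this antidiagonal decomposition is in fact a decomposition by the statistic $degr$. Writing $c_{a,b} := |\{y \in \mathbf{D}_{r/s} : area(y)=a,\ dinv(y)=b\}|$, the conjecture reduces to showing that for each $(a,b)$ the signed count of maximal paths whose monomial string contains $q^a t^b$ equals $c_{a,b}$; the hoped-for symmetry $c_{a,b}=c_{b,a}$ would then be automatic from the palindromic structure of $[a,b]_{q,t}$.

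Next I would try to realise this as a bijection-with-involution. To each Dyck path $y \in \mathbf{D}_{r/s}$ one assigns a pair $(x,m)$, where $x$ is a maximal Dyck path with the same $degr$ as $y$ and $m$ is one of the monomials in the string attached to $x$; simultaneously, one must construct a sign-reversing involution that cancels every $\mathbf{T}_{r/s}^-$-contribution against a matching $\mathbf{T}_{r/s}^+$-contribution on the same antidiagonal. A natural move to seek is a local modification of a maximal path that shifts $(area, dinv)$ by $(\pm 1, \mp 1)$, so that two maximal paths producing the same monomial $q^a t^b$ with opposite signs cancel. Following the hint in the abstract, a more tractable route stratifies both sides by $degr$ and inducts on $d \leq d^*$ under the hypothesis $r \equiv 1 \pmod s$: the base case $d=0$ should correspond to paths essentially on the diagonal, where $\mathbf{T}_{r/s}^-$ is empty and the identity is trivial, and each induction step then reduces to the finite counting problem promised by the author.

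The main obstacle is the explicit construction of the sign-reversing involution. In the classical case $r=s+1$ a combinatorial proof of $q,t$-symmetry is a famous open problem, and establishing (\ref{mon}) would implicitly yield one, since the right-hand side is $q,t$-symmetric by construction. The local move above is unlikely to be canonical, so producing a well-defined involution whose fixed points are exactly the monomials actually contributing to $\mathcal{C}_{r/s}$, together with a uniform description of the bijection on the positive side that works for every coprime pair $(r,s)$, is where the combinatorial heart of the problem will lie.
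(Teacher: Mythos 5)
What you have written is a strategy outline, not a proof, and the strategy it outlines is essentially the one the paper itself adopts without being able to close it in general: your ``antidiagonal by antidiagonal'' comparison is exactly the decomposition into the homogeneous pieces $\mathcal{C}_{r/s}^d$ (since $area+dinv=M-degr$), your local move shifting $(area,dinv)$ by $(\pm1,\mp1)$ is the string construction of step (1) of Procedure \ref{pro}, and your cancellation of $\mathbf{T}_{r/s}^-$-contributions is the paper's accounting of ``missing'' elements via the $area$/$dinv$-interchanging bijection $\mathbf{T}_{r/s}^d\rightarrow\mathbf{B}_{r/s}^d$ of step (2). The genuine gap is that every object your argument needs is left unconstructed: you never define the strings, never produce the bijection interchanging $area$ and $dinv$, and never verify that the signed bookkeeping matches the coefficients $c_{a,b}$. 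Note also that the statement is a \emph{conjecture}: the paper does not prove it either, and the machinery it builds to handle the cases it can handle ($r\equiv 1\bmod s$, $d\le 20$) is substantial --- the cycle maps $\mathtt{left}/\mathtt{right}$ to build strings, the bijections through bounded partitions and $0$--$1$ matrices with the involution $\iota$ exchanging $h$ and $w$, Theorem \ref{project}, the extension claim, and a finite computer check.

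A second, more specific problem is your proposed induction. You suggest inducting on $d$ with base case $d=0$ and asserting that ``each induction step then reduces to the finite counting problem promised by the author.'' That is not how the reduction works, and no argument is offered for why an induction on $d$ would have tractable steps: in the paper the finite computation is a base case in the \emph{width} $\ell$ (for each $m\le d^*$ one checks all $\ell\le\ell^*$ with $(\ell^*-1)m>d^*$), and passing from $\ell^*$ to arbitrary $\ell$ is the hard part, requiring the projection theorem and the extendability argument to transport strings and their extensions from $(\ell-1,m)$ to $(\ell,m)$. Without an analogue of that transport step, your induction scheme has no mechanism for the inductive step at all, so as it stands the proposal restates the problem rather than solving it.
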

}

\begin{example}                                                    
There are exactly $7$ rational Dyck paths of length 3 and height 5, which are shown below.  Of these, precisely 2 are maximal, which are indicated by the black circles.
\begin{multicols}{7}
\noindent
\begin{tikzpicture}[scale=0.35] 
 \draw (0, 0)--(3, 5)--(3,0)--(0,0); 
 \draw[step=1cm,gray] (1,0) grid (3,1); 
 \draw[step=1cm,gray] (2,1) grid (3,3); 
 \draw[red, line width=0.8mm](0,0)--(1,0)--(1,1)--(2,1)--(2,3)--(3,3)--(3,5);
 \filldraw[black] (0,7) circle (0.01pt) node[anchor=west]{$area=0$}; 
 \filldraw[black] (0,6) circle (0.01pt) node[anchor=west]{$dinv=4$}; 
  \draw[black] (2,3) circle (12  pt); 
\end{tikzpicture}
\vfill\null
\columnbreak
\noindent
\begin{tikzpicture}[scale=0.35] 
 \draw (0, 0)--(3, 5)--(3,0)--(0,0); 
 \draw[step=1cm,gray] (1,0) grid (3,1); 
 \draw[step=1cm,gray] (2,1) grid (3,3); 
 \draw[red, line width=0.8mm](0,0)--(1,0)--(1,1)--(2,1)--(2,2)--(3,2)--(3,5);
 \filldraw[black] (0,7) circle (0.01pt) node[anchor=west]{$area=1$}; 
 \filldraw[black] (0,6) circle (0.01pt) node[anchor=west]{$dinv=3$}; 
\end{tikzpicture}
\vfill\null
\columnbreak
\noindent
\begin{tikzpicture}[scale=0.35] 
 \draw (0, 0)--(3, 5)--(3,0)--(0,0); 
 \draw[step=1cm,gray] (1,0) grid (3,1); 
 \draw[step=1cm,gray] (2,1) grid (3,3); 
 \draw[red, line width=0.8mm](0,0)--(1,0)--(1,1)--(2,1)--(2,1)--(3,1)--(3,5);
 \filldraw[black] (0,7) circle (0.01pt) node[anchor=west]{$area=2$}; 
 \filldraw[black] (0,6) circle (0.01pt) node[anchor=west]{$dinv=2$}; 
\end{tikzpicture}
\vfill\null
\columnbreak
\noindent
\begin{tikzpicture}[scale=0.35] 
 \draw (0, 0)--(3, 5)--(3,0)--(0,0); 
 \draw[step=1cm,gray] (1,0) grid (3,1); 
 \draw[step=1cm,gray] (2,1) grid (3,3); 
 \draw[red,  line width=0.8mm](0,0)--(1,0)--(1,0)--(2,0)--(2,1)--(3,1)--(3,5);
 \filldraw[black] (0,7) circle (0.01pt) node[anchor=west]{$area=3$}; 
 \filldraw[black] (0,6) circle (0.01pt) node[anchor=west]{$dinv=1$}; 
\end{tikzpicture}
\vfill\null
\columnbreak
\noindent
\begin{tikzpicture}[scale=0.35] 
 \draw (0, 0)--(3, 5)--(3,0)--(0,0); 
 \draw[step=1cm,gray] (1,0) grid (3,1); 
 \draw[step=1cm,gray] (2,1) grid (3,3); 
 \draw[red,  line width=0.8mm](0,0)--(1,0)--(1,0)--(2,0)--(2,0)--(3,0)--(3,5);
 \filldraw[black] (0,7) circle (0.01pt) node[anchor=west]{$area=4$}; 
 \filldraw[black] (0,6) circle (0.01pt) node[anchor=west]{$dinv=0$}; 
\end{tikzpicture}
\vfill\null
\columnbreak
\noindent
\begin{tikzpicture}[scale=0.35] 
 \draw (0, 0)--(3, 5)--(3,0)--(0,0); 
 \draw[step=1cm,gray] (1,0) grid (3,1); 
 \draw[step=1cm,gray] (2,1) grid (3,3); 
 \draw[red,  line width=0.8mm](0,0)--(1,0)--(1,0)--(2,0)--(2,3)--(3,3)--(3,5);
 \filldraw[black] (0,7) circle (0.01pt) node[anchor=west]{$area=1$}; 
 \filldraw[black] (0,6) circle (0.01pt) node[anchor=west]{$dinv=2$};
  \draw[black] (2,3) circle (12pt); 
\end{tikzpicture}
\vfill\null
\columnbreak
\noindent
\begin{tikzpicture}[scale=0.35] 
 \draw (0, 0)--(3, 5)--(3,0)--(0,0); 
 \draw[step=1cm,gray] (1,0) grid (3,1); 
 \draw[step=1cm,gray] (2,1) grid (3,3); 
 \draw[red,  line width=0.8mm](0,0)--(1,0)--(1,0)--(2,0)--(2,2)--(3,2)--(3,5);
 \filldraw[black] (0,7) circle (0.01pt) node[anchor=west]{$area=2$}; 
 \filldraw[black] (0,6) circle (0.01pt) node[anchor=west]{$dinv=1$}; 
\end{tikzpicture}
\end{multicols}

Therefore the conjecture states (correctly) that:
\begin{eqnarray*}
\mathcal{C}_{5/3}= [0,4]_{q,t}+[1,2]_{q,t}=q^0t^4+q^1t^3+q^2t^2+q^3t^1+q^4t^0+q^1t^2+q^2t^1
\end{eqnarray*}
For a larger example (where $\mathbf{T}_{r/s}^- \neq \emptyset$) see example \ref{conex}.
\end{example}

Let us now outline a combinatorial procedure that  would (we will carry out this procedure in some but not all cases) prove our conjecture:  As noted in  \cite{LLL18} and \cite{HLLL20} it is convenient to consider the homogeneous parts of $\mathcal{C}_{r/s}$ separately.   Therefore, let  $\mathcal{C}_{r/s}^d$ be the part of $\mathcal{C}_{r/s}$ of total  degree $M-d$ in $q$ and $t$ where $M$ is the number of boxes fully contained in the triangle $(0,0), (s,0), (s,r)$. Further, define   $\mathbf{D}_{r/s}^d=\{x: x \in \mathbf{D}_{r/s}, \,\,\, area(x)+dinv(x)=M-d\}$ (this is equal to  the subset of $\mathbf{D}_{r/s}$ that is counted by $\mathcal{C}_{r/s}^d$). Similarly, let us define $\mathbf{T}_{r/s}^d = \{x: x \in \mathbf{T}_{r/s}, \,\,\, area(x)+dinv(x)=M-d\}$.   We can now describe our procedure:
\begin{procedure}\label{pro}
Perform the following steps.
\begin{enumerate}
\item {For each element $x \in \mathbf{T}_{r/s}^d$, construct a string of elements $\{x=x_0,x_1,\ldots,x_b\}$ where $area(x_{i+1})=area(x_i)+1$ and $dinv(x_{i+1})=dinv(x_i)-1$ for $i \in [0,b)$ and show that these strings partition $\mathbf{D}_{r/s}^d$.}
\item {Let $\mathbf{B}_{r/s}^d$ denote the set composed of all elements of $\mathbf{D}_{r/s}^d$ that are the terminal (rightmost) element of some string constructed in the previous step. Construct a bijection $\mathbf{T}_{r/s}^d \rightarrow \mathbf{B}_{r/s}^d$ that interchanges the statistics of $area$ and $dinv$. }
\end{enumerate}
\end{procedure}

The first step of Procedure \ref{pro} is inspired by previous attempts to create a symmetric string  decomposition of Dyck paths such as were made \cite{LLL18} and \cite{HLLL20}.  The second step is inspired by previous attempts to solve the problem by giving an explicit bijection that interchanges $area$ and $dinv$. The approach that we use combines these two methods.  In particular, if our strings happened to be symmetric  (our strings are not  symmetric in general) our approach would be equivalent to the first one.  If our strings happened to all be singletons, our approach would be equivalent to the second one.

The main result of the paper is as follows: In the case that $r\equiv  1 \mod s$, for each positive integer, $d^*$, we give a set of instructions $I(d^*)$ to carry out Procedure \ref{pro} for all $\mathbf{D}_{r/s}^d$ such that $d \leq d^*$.   Moreover, we can prove that these instructions correctly perform Procedure \ref{pro} if we are allowed to perform a finite base case check $\beta(d^*)$ whose complexity depends on $d^*$ (but, importantly, is independent of $r$, $s$, and $d$).

 More precisely, the results of this paper show that:

\greybox{
\begin{result}\label{res}
For any integer, $d^*$, providing a  set of instructions, $I(d^*)$ and proving that they perform Procedure \ref{pro} correctly for all values of $(r,s,d)$ such that $r\equiv  1 \mod s$ and $d \leq d^*$ is equivalent to carrying out a finite computation, $\beta(d^*)$, that depends only on $d^*$.
\end{result}
}

 This means that, in the case $r \equiv  1 \mod s$, the problem of giving a combinatorial proof of the symmetry of the rational $q,t$-Catalan polynomial is reduced to the problem of finding a way to check $\beta(d^*)$ (combinatorially) for all $d^*$ simultaneously.

For concreteness, we will use the value of $d^*=20$ in the proofs, but it will be easy to see we could equally well use another number.  We choose $d^*=20$ because the time needed to \emph{actually} complete the base case  check is still reasonably small. We note that  the approach of bounding $d$ was used to combinatorially prove the symmetry of the  classical  (when $r=s+1$)  $q,t$-Catalan polynomial in  \cite{LLL18} and \cite{HLLL20}. The former of these restricted to $d \in\{0,1,2,3,4,5,6,7,8,9\}$ and the latter extended this to include $d=10$ and $d=11$.  \textbf{The contents of our paper, therefore, restricted to the classical case, extend the results of  \cite{LLL18} and \cite{HLLL20} from 11 up to any number $d^*$ for which one has time to perform the finite computation, $\beta(d^*)$.} It is important to understand that completing the finite computation $\beta(d^*)$ does more than just prove Conjecture \ref{con}--it allows Procedure \ref{pro} itself to be completed.

Since  Procedure \ref{pro} is composed of two steps, both of which are entirely combinatorial, one may be surprised that the formula (equation \ref{mon}) that  follows from it  includes negative terms. The fact that the negative terms appear is due to a certain form of overcounting that occurs when we combine the first and second steps of Procedure \ref{pro}.  We now explain how this works in detail (the argument below applies when $M-d$ is odd, a nearly identical one applies when $M-d$ is even).

Suppose we have completed the first step of Procedure \ref{pro} and have a list of strings that partitions $\mathbf{D}_{r/s}^d$.  Any string that has  at least one element, $x$, with $area(x)-dinv(x) \in \{-1,1\}$ we will call \emph{medial}. A string that only includes elements $x$ with $area(x)<dinv(x)-1$ we will call \emph{upper} and any string that only includes elements $x$ with $dinv(x)<area(x)-1$ we will call \emph{lower}.    First we will count the ``missing" elements, that is, for each upper or lower string we will count the minimal number of elements that would need to be added to that string to make it medial. (For instance, in order to make the leftmost string in Figure \ref{lines} medial we would need to add two elements, corresponding to the monomials $q^6t^9$ and $q^7t^8$.)  Next, because of the bijection $\mathbf{T}_{r/s}^d \rightarrow \mathbf{B}_{r/s}^d$, each lower string can be paired with an upper string that is missing the equal and opposite elements.  Since for each $x \in (\mathbf{T}_{r/s}^d)^-$ there is one lower string (and one upper string paired with it) we have one full symmetric string of missing elements which corresponds to the monomials appearing in $(dinv(x),area(x))_{q,t}$.  From all this it follows that the missing elements are precisely counted by the negative terms in equation \ref{mon}.

We now assume that all strings have been (minimally) lengthened to become medial.  The bijection $\mathbf{T}_{r/s}^d \rightarrow \mathbf{B}_{r/s}^d$ can now be used again to match the top part of each medial string with the bottom part of another (possibly the same) medial string.  Since each $x \in (\mathbf{T}_{r/s}^d)^+$ corresponds to the top part of a medial string (each of which is paired with the bottom part of a medial string) we have one full symmetric string of elements which corresponds to the monomials appearing in $[area(x),dinv(x)]_{q,t}$ for each  $x \in (\mathbf{T}_{r/s}^d)^+$.   Since all elements appear in such a fashion,  the elements in the  set of lengthened strings are precisely counted  by the positive terms in equation \ref{mon}.

Since the overcounting committed by the lengthening of strings can be corrected by subtracting the count associated to the missing elements, it follows that together the positive and negative  terms of equation \ref{mon} correctly count all elements of $\mathbf{D}_{r/s}^d$.  Hence the completion of Procedure \ref{pro} is sufficient to give a combinatorial proof of Conjecture \ref{con}.  See Figure \ref{lines}\footnote{Figure \ref{lines} is a hypothetical example designed for illustrative purposes only.}  for a visualization
 of this discussion.

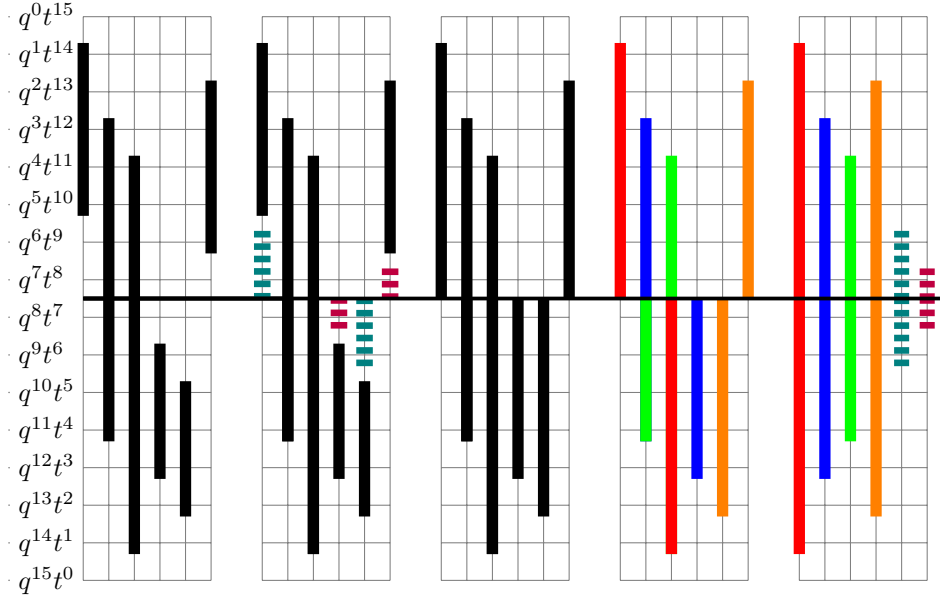
\begin{figure}

\begin{tikzpicture}[xscale=0.34, yscale=0.5] 
 \draw[step=1cm,gray] (-1,-7) grid (4,8); 

\draw[black, line width=1.5mm](-1,7.3)--(-1,2.7);
\draw[black, line width=1.5mm](0,5.3)--(0,-3.3);
\draw[black, line width=1.5mm](1,4.3)--(1,-6.3);
\draw[black, line width=1.5mm](2,-0.7)--(2,-4.3);
\draw[black, line width=1.5mm](3,-1.7)--(3,-5.3);
\draw[black, line width=1.5mm](4,6.3)--(4,1.7);

\filldraw[black] (-3.9,8) circle (0.1pt) node[anchor=west]{$q^0t^{15}$}; 
\filldraw[black] (-3.9,7) circle (0.1pt) node[anchor=west]{$q^1t^{14}$}; 
\filldraw[black] (-3.9,6) circle (0.1pt) node[anchor=west]{$q^2t^{13}$}; 
\filldraw[black] (-3.9,5) circle (0.1pt) node[anchor=west]{$q^3t^{12}$}; 
\filldraw[black] (-3.9,4) circle (0.1pt) node[anchor=west]{$q^4t^{11}$}; 
\filldraw[black] (-3.9,3) circle (0.1pt) node[anchor=west]{$q^5t^{10}$}; 
\filldraw[black] (-3.9,2) circle (0.1pt) node[anchor=west]{$q^6t^9$}; 
\filldraw[black] (-3.9,1) circle (0.1pt) node[anchor=west]{$q^7t^8$}; 
\filldraw[black] (-3.9,0) circle (0.1pt) node[anchor=west]{$q^8t^7$}; 
\filldraw[black] (-3.9,-1) circle (0.1pt) node[anchor=west]{$q^9t^6$}; 
\filldraw[black] (-3.9,-2) circle (0.1pt) node[anchor=west]{$q^{10}t^5$}; 
\filldraw[black] (-3.9,-3) circle (0.1pt) node[anchor=west]{$q^{11}t^4$}; 
\filldraw[black] (-3.9,-4) circle (0.1pt) node[anchor=west]{$q^{12}t^3$}; 
\filldraw[black] (-3.9,-5) circle (0.1pt) node[anchor=west]{$q^{13}t^2$}; 
\filldraw[black] (-3.9,-6) circle (0.1pt) node[anchor=west]{$q^{14}t^1$}; 
\filldraw[black] (-3.9,-7) circle (0.1pt) node[anchor=west]{$q^{15}t^0$};

 \draw[step=1cm,gray] (6,-7) grid (11,8); 
\draw[line width=0.5mm](-1,0.5)--(7,0.5);
\draw[black, line width=1.5mm](6,7.3)--(6,2.7);
\draw[teal, line width=2.2mm, dash pattern=on 2.5pt off 2.2pt ](6,2.3)--(6,0.5);
\draw[black, line width=1.5mm](7,5.3)--(7,-3.3);
\draw[black, line width=1.5mm](7,0.5)--(7,-3.3);
\draw[black, line width=1.5mm](8,4.3)--(8,-6.3);
\draw[purple, line width=2.2mm, dash pattern=on 2.5pt off 2.2pt ](9,-0.3)--(9,0.5);
\draw[black, line width=1.5mm](8,0.5)--(8,-6.3);
\draw[black, line width=1.5mm](9,-0.7)--(9,-4.3);
\draw[black, line width=1.5mm](10,-1.7)--(10,-5.3);
\draw[teal,line width=2.2mm, dash pattern=on 2.5pt off 2.2pt ](10,-1.3)--(010,0.5);
\draw[black, line width=1.5mm](11,6.3)--(11,1.7);
\draw[purple, line width=2.2mm, dash pattern=on 2.5pt off 2.2pt ](11,1.3)--(11,0.5);

 \draw[step=1cm,gray] (13,-7) grid (18,8); 
\draw[line width=0.5mm](-1,0.5)--(7,0.5);
\draw[black, line width=1.5mm](13,7.3)--(13,0.5);

\draw[black, line width=1.5mm](14,5.3)--(14,-3.3);
\draw[black, line width=1.5mm](14,0.5)--(14,-3.3);
\draw[black, line width=1.5mm](15,4.3)--(15,-6.3);
\draw[black, line width=1.5mm](15,0.5)--(15,-6.3);
\draw[black, line width=1.5mm](16,0.5)--(16,-4.3);

\draw[black, line width=1.5mm](17,0.5)--(17,-5.3);

\draw[black, line width=1.5mm](18,6.3)--(18,0.5);

 \draw[step=1cm,gray] (20,-7) grid (25,8); 

\draw[red, line width=1.5mm](20,7.3)--(20,0.5);

\draw[blue, line width=1.5mm](21,5.3)--(21,-3.3);
\draw[green, line width=1.5mm](21,0.5)--(21,-3.3);
\draw[green, line width=1.5mm](22,4.3)--(22,-6.3);
\draw[red, line width=1.5mm](22,0.5)--(22,-6.3);
\draw[blue, line width=1.5mm](23,0.5)--(23,-4.3);

\draw[orange, line width=1.5mm](24,0.5)--(24,-5.3);

\draw[orange, line width=1.5mm](25,6.3)--(25,0.5);

 \draw[step=1cm,gray] (27,-7) grid (32,8); 

\draw[red, line width=1.5mm](27,7.3)--(27,-6.3);
\draw[blue, line width=1.5mm](28,5.3)--(28,-4.3);
\draw[green, line width=1.5mm](29,4.3)--(29,-3.3);
\draw[orange, line width=1.5mm](30,6.3)--(30,-5.3);

\draw[teal, line width=2mm, dash pattern=on 2.5pt off 2.2pt](31,2.3)--(31,0.5);
\draw[teal, line width=2mm, dash pattern=on 2.5pt off 2.2pt](31,-1.3)--(31,0.5);
\draw[purple, line width=2mm, dash pattern=on 2.5pt off 2.2pt](32,1.3)--(32,0.5);
\draw[purple, line width=2mm, dash pattern=on 2.5pt off 2.2pt](32,-0.3)--(32,0.5);

\draw[line width=0.5mm](-1,0.5)--(32.5,0.5);

\end{tikzpicture}
\caption{Visualization of Procedure \ref{pro}} \label{lines}

\medskip
\small
\begin{enumerate}
\item The strings are constructed.
\item The additions are shown along with their pairing (via coloration).
\item The lengthened strings are shown.
\item The tops and bottoms of the lengthened strings are paired (via coloration).
\item The total elements counted (solid) and elements overcounted (dotted) are shown.
\end{enumerate}

\end{figure}

\subsection{Contents}\label{contents}

The paper is organized as follows:

\begin{itemize}
\item   In section 2 we introduce the definitions and establish a few basic facts. 
\item  In section 3 we include some technical results about the form of degree bounded Dyck paths. 
\item In section 4 we define the cyclic maps on Dyck paths and describe their basic properties--this will be helpful for the construction of strings.  
\item  In section 5 we introduce bounded partitions and $0-1$ matrices and establish their relationship to Dyck paths. 
\item  In section 6 we state and prove Theorem \ref{project}.
\item In section 7 we describe how to construct strings of Dyck paths and show how to extend them.
\item In section 8 we consider the base case of $\ell=2$. 
\item In section 9 we restate the conjecture and complete the proofs.
\item Appendix A gives the code needed to check the base case for the $d^*=20$ case.  
\item Appendix B gives the code needed to test the conjecture for any pair $(s,r)$ of relatively prime integers. 
\end{itemize}

Taken as a whole the contents of section 9 establish Result \ref{res} (once the reader has been convinced that the value of $d^*=20$ used in that section is arbitrary).  We should also mention that strictly speaking, the statement before Result \ref{res}   requires an application of a finite version of the axiom of choice.  Indeed, there are certain base cases where we show that the completion of Procedure \ref{pro} is possible but do not explicitly carry it out.  Instead of making an (arbitrary) set of rules to do this we simply trust that the reader accepts the axiom of choice for finite sets.

\section{Dyck paths, degree, and area}

\emph{
Sections 2 to 8 contain material only pertaining to the case  $s=(\ell+1)$ and $r=(\ell+1)m+1$.  In this case it is irrelevant whether we consider paths from $(0,0)$ to $(s,r)$ that stay below the line between these two points or the paths from $(0,0)$ to $(s,r-1)$ that stay weakly below the line between those two points because there is a bijection from the former set of paths to the latter given by deleting the last vertical step in the path.  It is somewhat notationally simpler to consider the latter so we do so in these sections.  However, the reader should be aware that the maximal paths are still defined in terms of the diagonal from $(0,0)$ to $(s,r)$.  Therefore, maximal paths in this setting are those whose first vertical step is as large as possible.  }

An $(\ell,m)$-Dyck path is a integral lattice path from $(0,0)$ to $(\ell+1,m(\ell+1))$ that stays (weakly) below the diagonal line between these two points. We can specify such a path $x$ using \emph{step coordinates} via the expression $x=(x_0,x_1,\ldots,x_{\ell})$ where  $x_0+\ldots+x_i\leq m(i+1)$ for $0 \leq i \leq \ell -1$ and $x_0+\ldots+x_{\ell}=m(\ell+1)$.

 Since the last step coordinate of an $(\ell,m)$-Dyck path is determined by the previous step coordinates we will sometimes write $x=(x_0,\ldots,x_{\ell-1},-)$.   In terms of lattice paths, the $x_i$ appearing in an $(\ell,m)$-Dyck path represent the number of vertical steps taken between each horizontal step in the physical path.

\begin{multicols}{2}
\begin{example}
A $(5,2)$-Dyck path $x$ is shown on the right.  
\begin{itemize}
\item In step coordinates we have $x=(1,3,0,2,2,4)$.
\item In position coordinates (defined in section 3) we have $x=[0,1,0,2,2,2]$.
\item We have $area(x)=7$ (see below).
\item We have $degr(x)=9$ (see below).
\end{itemize}
\end{example}
\columnbreak
\begin{eqnarray*}
 \begin{tikzpicture}[scale=0.37] 
 \draw (0, 0)--(6, 12)--(6,0)--(0,0); 
 \draw[step=1cm,gray,dashed] (1,0) grid (6,2); 
 \draw[step=1cm,gray,dashed] (2,2) grid (6,4); 
 \draw[step=1cm,gray,dashed] (3,4) grid (6,6); 
 \draw[step=1cm,gray,dashed] (4,6) grid (6,8); 
 \draw[step=1cm,gray,dashed] (5,8) grid (6,10); 
\draw[red, very thick](0,0)--(1,0)--(1,1)--(2,1)--(2,4)--(4,4)--(4,6)--(5,6)--(5,8)--(6,8)--(6,12);
\end{tikzpicture}
\end{eqnarray*}

\end{multicols}

The area of an $(\ell,m)$-Dyck path is defined to be $area(x)=M-(\ell x_0 + \cdots + 1 x_{\ell-1})$ where $M=m {{\ell+1}\choose 2}$, and the degree is defined to be $degr^+(x)+degr^-(x)$ where 

\begin{eqnarray*}
degr^+(x)=\sum_{1 \leq i \leq j < \ell}\delta_{ij}^-(x)\\
degr^-(x)=\sum_{1 \leq i \leq j < \ell}\delta_{ij}^+(x)
\end{eqnarray*}
where
\begin{eqnarray*}
 \delta_{ij}^+(x)=min(x_i,max(0,(x_i-m)+\cdots+(x_j-m)-1))\\
\delta_{ij}^-(x)=min(x_{i-1},max(0,(m-x_i)+\cdots+(m-x_j)))
\end{eqnarray*}

\begin{claim} \label{fitsout}
Suppose that $x=(x_1,\ldots,x_{\ell})$ is an $(\ell-1,m)$-Dyck path and that $x_0 \leq m$, then  then $x'=(x_0,x_1,\ldots,x_{\ell-1},-)$ is an $(\ell,m)$-Dyck path and  $degr(x') \leq x_0(\ell-1)+degr(x)$.  
\end{claim}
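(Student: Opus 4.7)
The plan is to prove the claim in two parts: first verify that $x'$ is a valid $(\ell,m)$-Dyck path, and then derive the degree bound by a term-by-term comparison of $degr(x')$ with $degr(x)$.

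For the Dyck property, I would check for each $0 \leq j \leq \ell-1$ that $x'_0 + x'_1 + \cdots + x'_j \leq m(j+1)$. Since $x'_i = x_i$ for $1 \leq i \leq \ell-1$, this partial sum equals $x_0 + (x_1 + \cdots + x_j)$; the Dyck property of $x$ (in its shifted indexing) gives $x_1 + \cdots + x_j \leq mj$, and combining with $x_0 \leq m$ yields $x'_0 + \cdots + x'_j \leq m(j+1)$. A similar computation shows the unwritten last coordinate $x'_\ell = m(\ell+1) - x_0 - (x_1 + \cdots + x_{\ell-1})$ is nonnegative, so $x'$ is a well-defined $(\ell,m)$-Dyck path.

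For the degree bound, I observe that both sums defining $degr(x')$ range over $1 \leq i \leq j < \ell$, so the redefined last coordinate $x'_\ell$ never appears, and I may use $x'_i = x_i$ throughout for $1 \leq i \leq \ell-1$. I would split each sum according to whether $i = 1$ or $i \geq 2$. For $i \geq 2$, each summand $\delta^+_{ij}(x')$ or $\delta^-_{ij}(x')$ agrees with the corresponding summand for $x$ under the shifted indexing, so the $i \geq 2$ contributions sum to exactly $degr(x)$. For the remaining $i=1$ terms, the summands $\delta^+_{1j}(x') = \min(x_1, \max(0, (x_1-m)+\cdots+(x_j-m)-1))$ vanish because $(x_1-m)+\cdots+(x_j-m) \leq 0 < 1$ by the Dyck property of $x$, while the summands $\delta^-_{1j}(x') = \min(x_0, \max(0, (m-x_1)+\cdots+(m-x_j)))$ are each at most $x_0$ and there are exactly $\ell-1$ of them (namely $j = 1, \ldots, \ell-1$). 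Adding everything up gives $degr(x') \leq degr(x) + (\ell-1)x_0$, which is the desired bound.

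The main obstacle is bookkeeping: the claim indexes the underlying $(\ell-1,m)$-Dyck path as $(x_1, \ldots, x_\ell)$ rather than the more standard $(x_0, \ldots, x_{\ell-1})$, so one must consistently translate the Dyck constraint into $x_1 + \cdots + x_j \leq mj$ for $1 \leq j \leq \ell-1$ and shift the range of the degree sum accordingly to $2 \leq i \leq j < \ell$ when identifying $degr(x)$ inside $degr(x')$. Once the indexing is pinned down, every individual inequality is immediate from the definitions of $\delta^+_{ij}$ and $\delta^-_{ij}$.
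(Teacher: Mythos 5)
Your proof is correct and follows essentially the same route as the paper's: establish the Dyck property from $x_0\le m$ together with $(x_1-m)+\cdots+(x_j-m)\le 0$, then observe that $degr(x')-degr(x)$ consists exactly of the $i=1$ terms, where each $\delta^+_{1j}(x')$ vanishes and each $\delta^-_{1j}(x')\le x_0$, giving the bound $x_0(\ell-1)$. Your treatment is just a more explicit bookkeeping of the index shift and the last coordinate, which the paper leaves implicit.
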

\begin{proof}
We have $(x_1-m)+\cdots + (x_j-m) \leq 0$ and $x_0 \leq m$ whence $(x_0-m)+\cdots + (x_j-m) \leq 0 $ so that $x'$ is an $(\ell,m)$-Dyck path.
We also have 
\begin{eqnarray*}
 degr(x')-degr(x)=\sum_{1 \leq j < \ell} \delta_{1j}^+(x')+ \delta_{1j}^-(x')
\end{eqnarray*}
 Now $(x_1-m)+\cdots+(x_j-m) \leq 0 $ implies that $\delta_{1j}^+(x')=0$ and  this along with $\delta_{1j}^-(x') \leq x_0$ implies each term in the sum is at most $x_0$ proving the claim.
\end{proof}

\begin{claim} \label{fitsin}
Suppose that $x=(x_0,x_1,\ldots,x_{\ell})$ is an $(\ell,m)$-Dyck path and that for all $1 \leq j <\ell $ we have $(x_0-m)+\cdots(x_j-m) \leq -m $ then $x'=(x_1,\ldots,x_{\ell-1},-)$ is an $(\ell-1,m)$-Dyck path and  $degr(x)=x_0(\ell-1)+degr(x')$.  
\end{claim}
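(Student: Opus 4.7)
The plan is to verify that $x'$ really is an $(\ell-1,m)$-Dyck path, and then to compute $degr(x) - degr(x')$ exactly by splitting the sum defining $degr(x)$ according to whether the first index $i$ equals $1$ or is at least $2$.

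First I would pin down the determined last entry of $x'$. Writing $x' = (x_1, \ldots, x_{\ell-1}, x'_\ell)$, the requirement that the entries of an $(\ell-1,m)$-Dyck path sum to $m\ell$, combined with $x_0 + \cdots + x_\ell = m(\ell+1)$, forces $x'_\ell = x_\ell + x_0 - m$. Applying the hypothesis at $j = \ell-1$ gives $x_0 + \cdots + x_{\ell-1} \leq m(\ell-1)$, hence $x_\ell \geq 2m$ and in particular $x'_\ell \geq 0$. The required partial sum bounds $x_1 + \cdots + x_j \leq mj$ for $1 \leq j \leq \ell-1$ are then immediate from the hypothesis together with $x_0 \geq 0$.

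For the degree identity, the terms in $degr(x)$ with $i \geq 2$ involve only the entries $x_{i-1}, x_i, \ldots, x_j$ with indices in $\{1, \ldots, \ell-1\}$ (since $j < \ell$), and these coincide with the corresponding entries of $x'$; after shifting indices these terms recover $degr(x')$ exactly. For the $i = 1$ terms, the hypothesis $(x_0-m) + \cdots + (x_j-m) \leq -m$ rearranges in two ways: to $(x_1-m) + \cdots + (x_j-m) \leq -x_0 \leq 0$, which forces $\delta^+_{1j}(x) = 0$; and to $(m-x_1) + \cdots + (m-x_j) \geq x_0$, which makes the inner $\max$ in $\delta^-_{1j}(x)$ equal this quantity, so in particular at least $x_0$, giving $\delta^-_{1j}(x) = x_0$. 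Summing over $1 \leq j < \ell$ produces exactly $x_0(\ell-1)$, and combining with the $i \geq 2$ part yields the claimed formula $degr(x) = x_0(\ell-1) + degr(x')$.

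The main obstacle is purely notational, namely tracking the index shift between $x$ and $x'$ cleanly; there is no deep combinatorics involved. The hypothesis is engineered precisely so that $\delta^+_{1j}$ vanishes while $\delta^-_{1j}$ saturates at $x_0$, producing exactly the extra $x_0(\ell-1)$ of degree carried by the initial vertical run of $x$, and mirroring in reverse the inequality $degr(x') \leq x_0(\ell-1) + degr(x)$ of Claim \ref{fitsout}.
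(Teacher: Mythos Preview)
Your proof is correct and follows essentially the same approach as the paper: both split $degr(x)$ into the $i=1$ terms and the $i\ge 2$ terms, identify the latter with $degr(x')$ via an index shift, and use the hypothesis to force $\delta_{1j}^+(x)=0$ and $\delta_{1j}^-(x)=x_0$ for each $j$. You include slightly more detail (explicitly checking $x'_\ell\ge 0$ and spelling out the index shift), but the argument is the same.
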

\begin{proof}
We have $(x_0-m)+\cdots + (x_j-m) \leq -m $ whence $(x_1-m)+\cdots + (x_j-m) \leq 0 $ so that $x'$ is an $(\ell-1,m)$-Dyck path.
We also have 
\begin{eqnarray*}
 degr(x)-degr(x')=\sum_{1 \leq j < \ell} \delta_{1j}^+(x)+ \delta_{1j}^-(x)
\end{eqnarray*}
 Now $(x_1-m)+\cdots+(x_j-m) \leq 0 $ implies that $\delta_{1j}^+(x)=0$ and $(x_0-m)+\cdots + (x_j-m) \leq -m $ implies $\delta_{1j}^-(x)=x_0$ so each term in the sum is equal to $x_0$ proving the claim.
\end{proof}

Sometimes it will be convenient to compute $degr(x)$ in a different way.  To do this we define

\begin{eqnarray*}
\epsilon_{ij}^+(x)=min(m,max(0,(x_i-m)+\cdots+(x_j-m)-1))\\
\epsilon_{ij}^-(x)=min(m,max(0,(m-x_i)+\cdots+(m-x_j)))\\
\epsilon_{ij}^0(x)=max(0,(m-x_i)+\cdots+(m-x_j)-m)
\end{eqnarray*}

\begin{claim}\label{epsi}
Let $1 \leq j<\ell$
\begin{eqnarray}
\epsilon_{1j}^+(x)+\cdots+\epsilon_{jj}^+(x)=\delta_{1j}^+(x)+\cdots+\delta_{jj}^+(x)\label{equalplus}\\
\epsilon_{1j}^-(x)+\cdots+\epsilon_{jj}^-(x)-\epsilon_{0j}^0(x)=\delta_{1j}^-(x)+\cdots+\delta_{jj}^-(x)\label{equalminus}
\end{eqnarray}
\end{claim}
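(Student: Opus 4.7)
The plan is to prove both identities by the same two-step strategy: (i) use the elementary identity $\min(a,b) = b - \max(0, b-a)$ to rewrite each $\delta_{ij}$ and $\epsilon_{ij}$ so that the ``outer'' $\max(0, \cdot)$ pieces match, and (ii) show that the resulting term-by-term differences telescope when summed over $i$, leaving two boundary terms that we can evaluate using the Dyck path condition.

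For (\ref{equalplus}), set $S_i = (x_i - m) + \cdots + (x_j - m)$, so that $S_{j+1} = 0$ and $S_i - x_i = S_{i+1} - m$. After applying the min-max identity and using $x_i, m \geq 0$ to simplify the resulting inner maxima, the term-by-term difference collapses to
\[
\epsilon_{ij}^+(x) - \delta_{ij}^+(x) = \max(0,\, S_{i+1} - m - 1) - \max(0,\, S_i - m - 1),
\]
which telescopes on summing $i = 1, \ldots, j$ to $\max(0,\, S_{j+1} - m - 1) - \max(0,\, S_1 - m - 1)$. The first boundary term is trivially zero since $S_{j+1}=0$. For the second, the Dyck condition at index $j \leq \ell - 1$ gives $\sum_{k=0}^{j} x_k \leq (j+1) m$, hence $S_1 \leq m - x_0 \leq m$, so $\max(0,\, S_1 - m - 1) = 0$ as well; this establishes (\ref{equalplus}).

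For (\ref{equalminus}), set $T_i = (m - x_i) + \cdots + (m - x_j)$, so that $T_i - x_{i-1} = T_{i-1} - m$. The parallel computation yields
\[
\epsilon_{ij}^-(x) - \delta_{ij}^-(x) = \max(0,\, T_{i-1} - m) - \max(0,\, T_i - m),
\]
which telescopes to $\max(0,\, T_0 - m) - \max(0,\, T_j - m)$. The upper boundary vanishes because $T_j = m - x_j \leq m$, and the lower boundary is exactly $\epsilon_{0j}^0(x)$ by its definition. Transposing this term to the other side gives (\ref{equalminus}).

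The only substantive step beyond routine bookkeeping is invoking the Dyck path condition to kill the $S_1$ boundary term in the $+$ case. The slight asymmetry in the $-$ case, where $\delta_{ij}^-$ involves $x_{i-1}$ rather than $x_i$, shifts the range of the telescoping by one and is precisely why (\ref{equalminus}) carries the $\epsilon_{0j}^0(x)$ correction rather than being a clean equality like (\ref{equalplus}).
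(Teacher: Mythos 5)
Your proof is correct and follows essentially the same route as the paper: the same per-term identity $\epsilon_{ij}^{\pm}-\delta_{ij}^{\pm}=\max(0,\cdot)-\max(0,\cdot)$ (which the paper verifies by a four-case check rather than via $\min(a,b)=b-\max(0,b-a)$), followed by the identical telescoping over $i$ and evaluation of the two boundary terms using the Dyck path condition, with $\epsilon_{0j}^{0}(x)$ emerging as the surviving boundary term in the minus case. No gaps to report.
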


\begin{proof}
By considering the four possible cases:
\begin{itemize}
\item $x_i,m \leq (x_{i}-m)+\cdots+(x_j-m)-1$
\item $x_i,m \geq (x_{i}-m)+\cdots+(x_j-m)-1$
\item $x_i < (x_{i}-m)+\cdots+(x_j-m)-1<m$
\item $m <  (x_{i}-m)+\cdots+(x_j-m)-1<x_i$
\end{itemize}
it is easy to verify that
\begin{eqnarray*}
 \epsilon_{ij}^+(x)-\delta_{ij}^+(x)=max(0,(x_{i}-m)+\cdots+(x_j-m)-1-x_i)\nonumber\\-max(0,(x_i-m)+\cdots+(x_j-m)-1-m)\nonumber
\end{eqnarray*}
Thus if the difference between the left hand side and the right hand side of \ref{equalplus} is rewritten using this equation, all middle terms cancel and we are left with
\begin{eqnarray*}
max(0,-1-m)-max(0,(x_1-m)+\cdots+(x_j-m)-1-m)=0
\end{eqnarray*}
since $(x_1-m)+\cdots+(x_j-m) \leq m+1$ follows from the fact $x$ is an $(\ell,m)$-Dyck path. Similarly by considering the four possible cases:
\begin{itemize}
\item $x_{i-1},m \leq (m-x_{i})+\cdots+(m-x_j)$
\item $x_{i-1},m \geq (m-x_{i})+\cdots+(m-x_j)$
\item $x_{i-1} < (m-x_{i})+\cdots+(m-x_j)<m$
\item $m <  (m-x_{i})+\cdots+(m-x_j)<x_{i-1}$
\end{itemize}
one can see that
\begin{eqnarray*}
 \epsilon_{ij}^-(x)-\delta_{ij}^-(x)=max(0,(m-x_{i})+\cdots+(m-x_j)-x_{i-1})\nonumber\\-max(0,(m-x_i)+\cdots+(m-x_j)-m)\nonumber
\end{eqnarray*}
Thus if the difference between the left hand side and the right hand side of \ref{equalminus} is rewritten using this equation, all middle terms cancel and we are left with
\begin{eqnarray*}
-\epsilon_{(0)j}^0(x)+max(0,(m-x_{1})+\cdots+(m-x_j)-x_0)\nonumber\\-max(0,(m-x_j)-m)=0
\end{eqnarray*}
since the first two terms above are opposites and the third term is $0$.
\end{proof}
It follows from the previous claim that
\begin{claim}\label{split}
We have:
\begin{eqnarray*}
degr^+(x)=\sum_{1 \leq i\leq j<\ell} \epsilon_{ij}^+(x) \\
degr^-(x)=\sum_{1 \leq i\leq j<\ell} \epsilon_{ij}^-(x) - \sum_{1\leq j <\ell} \epsilon_{0j}^0(x) 
\end{eqnarray*}
\end{claim}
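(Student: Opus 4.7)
The plan is to derive Claim \ref{split} as an essentially immediate corollary of Claim \ref{epsi}. I would fix $j$ with $1 \leq j < \ell$ and apply Claim \ref{epsi} to read off the two single-$j$ identities
\[
\sum_{i=1}^{j}\epsilon_{ij}^{+}(x) = \sum_{i=1}^{j}\delta_{ij}^{+}(x),
\qquad
\sum_{i=1}^{j}\epsilon_{ij}^{-}(x) - \epsilon_{0j}^{0}(x) = \sum_{i=1}^{j}\delta_{ij}^{-}(x).
\]
Then I would sum both identities over $j \in \{1,\dots,\ell-1\}$ and interchange the order of summation so that the inner index $i$ and outer index $j$ combine into the double index set $\{(i,j) : 1 \leq i \leq j < \ell\}$ on the left-hand sides. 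The right-hand sides, once summed, are precisely the double-$\delta$ sums appearing in the definitions of $degr^{+}(x)$ and $degr^{-}(x)$, so the two claimed equalities follow immediately.

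Since this amounts to a single reindexing on top of Claim \ref{epsi}, there is no substantive obstacle. The only point requiring a moment's attention is tracking the correction term: when we sum the second identity over $j$, the $-\epsilon_{0j}^{0}(x)$ correction that sits outside the inner sum aggregates into the single outer sum $-\sum_{1 \leq j < \ell}\epsilon_{0j}^{0}(x)$, rather than being folded into the double sum over $(i,j)$. Confirming this bookkeeping, along with verifying the $i$-range interchange, completes the proof.
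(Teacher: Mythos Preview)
Your proposal is correct and matches the paper's approach exactly: the paper simply states that Claim \ref{split} ``follows from the previous claim'' (Claim \ref{epsi}), which is precisely the summation over $j$ and reindexing you describe.
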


\begin{example}\label{comdeg}
Suppose that $\ell=5$ and that $m=5$ and that $x=(3,0,12,1,2,12)$ then we have \\

\noindent \mbox{$\delta_{11}^+=0$, $\delta_{12}^+=0$, $\delta_{13}^+=0$, $\delta_{14}^+=0$, $\delta_{22}^+=6$, $\delta_{23}^+=2$, $\delta_{24}^+=0$, $\delta_{33}^+=0$, $\delta_{34}^+=0$, $\delta_{44}^+=0$}  \\

\noindent \mbox{$\epsilon_{11}^+=0$, $\epsilon_{12}^+=1$, $\epsilon_{13}^+=0$, $\epsilon_{14}^+=0$, $\epsilon_{22}^+=5$, $\epsilon_{23}^+=2$, $\epsilon_{24}^+=0$, $\epsilon_{33}^+=0$, $\epsilon_{34}^+=0$, $\epsilon_{44}^+=0$}  \\

\noindent so that by either method $degr^+(x)=8$ whereas\\

\noindent \mbox{$\delta_{11}^-=3$, $\delta_{12}^-=0$, $\delta_{13}^-=2$, $\delta_{14}^-=3$, $\delta_{22}^-=0$, $\delta_{23}^-=0$, $\delta_{24}^-=0$, $\delta_{33}^-=4$, $\delta_{34}^-=7$, $\delta_{44}^-=1$ } \\

\noindent \mbox{$\epsilon_{11}^-=5$, $\epsilon_{12}^-=0$, $\epsilon_{13}^-=2$, $\epsilon_{14}^-=5$, $\epsilon_{22}^-=0$, $\epsilon_{23}^-=0$, $\epsilon_{24}^-=0$, $\epsilon_{33}^-=4$, $\epsilon_{34}^-=5$, $\epsilon_{44}^-=3$ } \\

\noindent and\\

\noindent \mbox{$\epsilon_{01}^0=2$, $\epsilon_{02}^0=0$, $\epsilon_{03}^0=0$, $\epsilon_{04}^0=2$  }\\

\noindent so that by either method $degr^-(x)=20$.\\

\end{example}

\begin{definition}
We define the set $\mathbf{D}_{\ell,m}$ to be the set of all $(\ell,m)$-Dyck paths.  We define $\mathbf{T}_{\ell,m}$ to be the subset of $\mathbf{D}_{\ell,m}$  such that if $x=(x_0,x_1,\ldots,x_{\ell})$ then $x_0=m$.   We define $\mathbf{D}_{\ell,m}^{<d}= \{x \in D_{\ell,m}: degr(x)<d\}$ and $\mathbf{T}_{\ell,m}^{<d}= \{x \in T_{\ell,m}: degr(x)<d\}$.
\end{definition}

\section{Properties of degree bounded Dyck paths}
In this section we establish some properties of $(\ell,m)$-Dyck paths with degree less than $(\ell-2)m$.  
\begin{claim}\label{bound1}
Let $x=(x_0,x_1,\ldots,x_{\ell})$.   Suppose $x_1+\cdots+x_{\ell-1}\leq (\ell-2)m$ and $x_0+x_{1} \geq m$ then $degr^-(x) \geq (\ell-2)m$.   
\end{claim}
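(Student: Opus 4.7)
The plan is to reformulate everything via a gap function and reduce the claim to a concrete inequality on sequences. Set $g_c = mc - (x_0 + \cdots + x_{c-1})$ for $c = 0, 1, \ldots, \ell + 1$, so $g_c \geq 0$ (Dyck), $g_c - g_{c-1} = m - x_{c-1}$, and the two hypotheses translate to $g_2 \leq m$ (from $x_0 + x_1 \geq m$) and $g_\ell \geq m$ (from $x_1 + \cdots + x_{\ell-1} \leq (\ell-2)m$). Since $(m - x_i) + \cdots + (m - x_j) = g_{j+1} - g_i$ and $x_{i-1} = m + g_{i-1} - g_i$, applying the identity $\min(a, b) = b - \max(0, b - a)$ (valid for $b \geq 0$) yields the clean form
\[
\delta^-_{ij}(x) = \max(0, g_{j+1} - g_i) - \max(0, g_{j+1} - g_{i-1} - m).
\]

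Next, summing this identity over $i = 1, \ldots, j$ causes adjacent positive and shifted-negative terms to collapse; using $g_0 = 0$ and the elementary fact $\max(0, A) - \max(0, A - m) = \min(m, \max(0, A))$, one obtains
\[
\sum_{i=1}^{j} \delta^-_{ij}(x) = \max(0, g_{j+1} - g_j) + \sum_{i=1}^{j-1} \min(m, \max(0, g_{j+1} - g_i)) - \max(0, g_{j+1} - m).
\]
Summing a second time over $j = 1, \ldots, \ell - 1$ expresses $degr^-(x)$ purely in terms of the gap sequence $(g_c)_{c=1}^{\ell}$. The claim thereby reduces to showing that this expression is at least $(\ell - 2) m$ for every non-negative sequence $(g_c)$ with $g_2 \leq m$ and $g_\ell \geq m$.

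The final step is this combinatorial inequality on gap sequences. I would first dispose of the regime in which $g_c \leq m$ for all $c \leq \ell$: here the $\max(0, g_c - m)$ subtractions vanish, the $\min(m, \cdot)$'s become trivial, and the expression telescopes in a controlled way that uses $g_\ell \geq m$ together with the non-negative increments $\max(0, m - x_c)$. The general case is handled by tracking the excursions of $(g_c)$ above level $m$: each such excursion simultaneously generates extra $\min(m, \max(0, g_{j+1} - g_i))$ contributions and new $\max(0, g_c - m)$ penalties, which must be paired carefully. The main obstacle is precisely this bookkeeping, since a naive $1$-Lipschitz bound on $\min(m, \cdot)$ of the form $\min(m, \max(0, a - b)) \geq \min(m, a) - \min(m, b)$ is too crude and leaves a deficit proportional to $(\ell - 1) g_1 \leq (\ell - 1) m$ that cannot be recovered. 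Guided by the extremal path $x = (0, m, 0, \ldots, 0, \ell m)$ where equality holds with the entire contribution concentrated in $\delta^-_{2, j}$ for $j = 2, \ldots, \ell-1$, I expect the cleanest argument to single out column $2$ (or, more generally, the first column where the path comes within $m$ of the diagonal) as the primary carrier of the $(\ell - 2) m$ lower bound, matched column by column.
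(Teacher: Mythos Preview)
Your reformulation via the gap function $g_c$ is correct and clean: the identity
\[
\delta^-_{ij}(x)=\max(0,g_{j+1}-g_i)-\max(0,g_{j+1}-g_{i-1}-m)
\]
holds, and the partial telescoping you write down is valid (it is essentially the passage from the $\delta$'s to the $\epsilon$'s that the paper records separately). But the proposal stops exactly where the work begins. Your ``final step'' is the entire content of the claim, and you do not prove it: you describe an easy regime (all $g_c\le m$), state that the general case requires tracking excursions above level $m$, explicitly note that the naive Lipschitz bound fails, and close with ``I expect the cleanest argument to single out column $2$''. That is a heuristic, not an argument. In particular, your own extremal example $x=(0,m,0,\ldots,0,\ell m)$ has $g_3=2m,\,g_4=3m,\ldots$, so it lies well outside your ``easy'' regime; the excursion bookkeeping you defer is precisely what has to be done, and you have not indicated how to do it.

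The paper takes a completely different route that avoids this difficulty. It fixes $x_0+x_1=m$, discards the single term $\delta_{11}^-$, and interpolates between $x$ and the path $x^{\ell}=(x_0,x_1,0,\ldots,0,x_2+\cdots+x_\ell)$ via intermediate paths $x^k=(x_0,x_1,0,\ldots,0,x_2+\cdots+x_k,x_{k+1},\ldots,x_\ell)$. For $x^{\ell}$ the bound is immediate because $\delta_{1j}^-+\delta_{2j}^-=m$ for every $j\ge 2$. One then shows $neg(x^{k-1})\ge neg(x^{k})$ by applying the unit shift $\mathtt{shift}_k$ repeatedly and pairing each possible decrease among $\delta_{1(k-1)}^-,\delta_{2(k-1)}^-,\delta_{(k+1)j}^-$ with a guaranteed increase among $\delta_{kj}^-$; the hypothesis $x_1+\cdots+x_{\ell-1}\le(\ell-2)m$ is used precisely to force $\delta_{k(\ell-1)}^-$ to increase whenever $\delta_{1(k-1)}^-$ or $\delta_{2(k-1)}^-$ decreases. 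The general case $x_0+x_1\ge m$ follows by monotonicity in $x_0$. If you want to salvage your gap-sequence formulation, this interpolation is the missing idea: you need a deformation of $(g_c)$ along which the expression is monotone and whose endpoint is computable, rather than a direct estimate on a fixed sequence.
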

\begin{proof}
First suppose that $x_0+x_1=m$.  Define
\begin{eqnarray*}
neg(x)= \left(\sum_{1 \leq i \leq j \leq \ell-1} \delta_{ij}^-(x) \right)-\delta_{11}^-(x) 
\end{eqnarray*}
For $1<k\leq \ell$ let 
\begin{eqnarray*}
x^k=(x_0,x_1,0,\ldots,0,x_{2}+\cdots+x_k,x_{k+1},\ldots,x_{\ell})
\end{eqnarray*}
It is easy to see that $\delta_{1j}^-(x^{\ell})+\delta_{2j}^-(x^{\ell})=m$ for $2 \leq j \leq \ell-1$ so that  $neg(x^{\ell}) \geq (\ell-2)m$. Now suppose $neg(x^{k}) \geq (\ell-2)m$  for some $k \in (2,\ell]$ and let $\mathtt{shift_k}(z_0,\ldots,z_{k-1},z_k,\ldots,z_{\ell})=(z_0,\ldots,z_{k-1}+1,z_k-1,\ldots,z_{\ell})$ for any $z$.  Then $x^{k-1}$ is obtained from $x^k$ by applying $\mathtt{shift_k}$ $(x_{2}+\cdots+x_{k-1})$ times. 

 The only terms that appear in $neg()$ that can decrease in value when one of these instances of $\mathtt{shift_k}$ is applied are $\delta_{1(k-1)}$, $\delta_{2(k-1)}$ and $\delta_{(k+1)j}$ for $j \in [k+1,\ell-1]$ and each of these values may decrease by at most $1$.  The only terms that appear in $neg()$ that can increase in value when one of these instances of $\mathtt{shift_k}$ is applied are $\delta_{kj}$ for $j \in [k,\ell-1]$ and each of these values may increase by at most $1$.  

Moreover, if the $t^{th}$ application of $\mathtt{shift_k}$ results in a decrease in $\delta_{1(k-1)}$ then $x_0+x_1+t >(k-1)m$ so that $t> (k-2)m$ which implies that  $\delta_{2(k-1)}$ is $0$ before and after the $t^{th}$ application of $\mathtt{shift_k}$.  Thus, at most one of $\delta_{1(k-1)}$ and $\delta_{2(k-1)}$ can decrease with each application of $\mathtt{shift_k}$.  Further, if  $t^{th}$ application of $\mathtt{shift_k}$ results in a decrease in either $\delta_{1(k-1)}$ or  $\delta_{2(k-1)}$ we have $x_1+t>(k-2)m$.  This in combination with the assumption that $x_1+\cdots+x_{\ell-1}\leq (\ell-2)m$ implies that $(x_2+\cdots+x_k-t)+x_{k+1}+\cdots+x_{\ell-1}<(\ell-k)m$ which implies that $\delta_{k(\ell-1)}$ increases with the $t^{th}$ application of $\mathtt{shift_k}$.

Finally, if  for some $j \in [k+1,\ell-1]$ the $t^{th}$ application of $\mathtt{shift_k}$ results in a decrease in $\delta_{(k+1)j}$ then $(x_2+\cdots+x_k-t)+x_{k+1}+\cdots+x_{j}<(j-k)m$ which implies that $(x_2+\cdots+x_k-t)+x_{k+1}+\cdots+x_{j-1}<(j-k)m$ so that  $\delta_{k(j-1)}$ must increase with the $t^{th}$ application of $\mathtt{shift_k}$.  It follows that $neg(x^{k-1}) \geq neg(x^k)$ so   by induction we see that $neg(x) \geq (\ell-2)m$ since $x^2=x$.

Now suppose that $x_0+x_1 \geq m$.  If $x_1>m$ then the argument above implies that $neg(0,m,x_2,\ldots,x_{\ell-1},-) \geq m(\ell-2)$ which then  implies that $neg(0,x_1,x_2,\ldots,x_{\ell-1},-) \geq (\ell-2)m$ which implies $neg(x_0,x_1,x_2,\ldots,x_{\ell-1},-) \geq (\ell-2)m$.  On the other hand, if $x_1 \leq m$ then the argument above implies that $neg(m-x_1,x_1,x_2,\ldots,x_{\ell-1},-) \geq (\ell-2)m$ so that $neg(x_0,x_1,x_2,\ldots,x_{\ell-1},-) \geq (\ell-2)m$.  But  $degr^-(x) \geq neg(x) \geq (\ell-2)m$.

\end{proof}

\begin{claim} \label{bound2}
Let $x=(x_0,\ldots,x_{\ell})$ and suppose that $i$ is minimal such that $i>0$ and $x_i+x_{i+1} \geq m$.  Suppose that $x_{i+1}+\cdots+x_{\ell-1}>(\ell-i)m+1$ and that either there exists some $2 \leq j \leq \ell-1$ such that $x_{0}+\cdots+x_{j}>jm$ or else $x_0+x_1 \geq m$.  Then $degr(x) \geq (\ell-2)m$. 
\end{claim}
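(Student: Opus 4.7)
The plan is to follow the perturbation-based strategy used to prove Claim \ref{bound1}, but applied in a way that tracks both $degr^+$ and $degr^-$ contributions and is adapted to the two-regime structure of the present hypothesis. The minimality of $i$ gives $x_j + x_{j+1} < m$ for every $1 \leq j < i$, which tightly constrains the initial portion of the path. The hypothesis $x_{i+1}+\cdots+x_{\ell-1} > (\ell-i)m+1$ then forces the later portion of the path to carry substantial excess above the diagonal, so by Claim \ref{split} many $\epsilon_{kj}^+(x)$ values are positive, yielding a meaningful lower bound on $degr^+(x)$ alone.

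I would then split on the given disjunction. If $x_0 + x_1 \geq m$, I would try to reduce to Claim \ref{bound1}: either its other hypothesis $x_1+\cdots+x_{\ell-1} \leq (\ell-2)m$ holds, so Claim \ref{bound1} applies directly and gives $degr^-(x) \geq (\ell-2)m$; otherwise $x_1+\cdots+x_{\ell-1} > (\ell-2)m$, and a direct analysis of the $\epsilon^+$ contributions from the resulting excess above the diagonal (again via Claim \ref{split}) should suffice. If instead some partial sum $x_0+\cdots+x_j$ exceeds $jm$, that partial sum directly generates positive $\delta_{1j}^+(x)$ values, which I would aggregate with the tail contributions coming from the $x_{i+1}+\cdots+x_{\ell-1}$ hypothesis.

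The main obstacle will be the bookkeeping required to show that these $degr^+$ and $degr^-$ contributions combine, without double-counting, to reach $(\ell-2)m$ in every subcase. The cleanest route is likely a $\mathtt{shift}_k$-style interpolation in the vein of Claim \ref{bound1}, modified so that the preserved invariant is the total degree rather than just the quantity $neg(x)$ defined there. The delicate case analysis over which $\delta$ and $\epsilon$ terms can change under each perturbation step, and whether an increase in one is always matched by a compensating decrease elsewhere, is where most of the effort will lie; the minimality of $i$ should make this easier than in Claim \ref{bound1} by pinning down the structure of $x_1,\ldots,x_{i-1}$ in advance.
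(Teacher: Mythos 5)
Your outline has a genuine gap in the branch $x_0+x_1\geq m$. There you propose: if $x_1+\cdots+x_{\ell-1}\leq(\ell-2)m$ apply Claim \ref{bound1} to $x$ itself (fine), and otherwise get all of $(\ell-2)m$ from the $\epsilon^+$ terms alone. The latter is false. Take $\ell=5$, $m=10$, $x=(10,0,0,16,16,18)$: this is a valid $(5,10)$-Dyck path, $i=2$, $x_3+x_4=32>(\ell-i)m+1=31$, and $x_0+x_1=10\geq m$, so the hypotheses of Claim \ref{bound2} hold and $x_1+\cdots+x_4=32>(\ell-2)m=30$, yet $degr^+(x)=\epsilon^+_{24}+\epsilon^+_{33}+\epsilon^+_{34}+\epsilon^+_{44}=1+5+10+5=21<30$. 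The tail hypothesis only yields $degr^+(x)\geq(\ell-i-1)m$, which falls short by $(i-1)m$ whenever $i\geq 2$; the missing $(i-1)m$ must come from $degr^-$. The paper closes this by noting that minimality of $i$ gives $x_1+\cdots+x_i<(i-1)m$, so Claim \ref{bound1} applies to the \emph{truncated} path $x'=(x_0,\ldots,x_i,-)$, giving $degr^-(x)\geq degr^-(x')\geq(i-1)m$; your plan never truncates, and applying \ref{bound1} to the full $x$ is exactly what fails in the subcase above.

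The other branch of your split is also off in its stated mechanism: $x_0+\cdots+x_j>jm$ does \emph{not} "directly generate positive $\delta^+_{1j}(x)$" (it only says the path is within $m$ of the diagonal, e.g.\ $x_0=m$, $x_1=m-1$, $x_2=2$ has $x_0+x_1+x_2>2m$ but $\delta^+_{12}=0$). What actually works, and is what the paper does when $x_0+x_1<m$, is: minimality of $i$ forces $j>i$, and then for each $k\in[2,i]$ one gets $\epsilon^+_{kj}\geq m-x_0-x_1$ together with $\delta^-_{1k}=x_0$ and $\delta^-_{2k}=x_1$, so the shortfall is covered by $(i-1)(m-x_0-x_1)+(i-1)(x_0+x_1)=(i-1)m$. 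Finally, your fallback of a $\mathtt{shift}_k$-style interpolation "preserving total degree" is not available either: the interpolation in Claim \ref{bound1} only establishes monotonicity of a lower bound on $neg(\cdot)$, not invariance of $degr$, and no such degree-preserving interpolation is needed here — the claim follows from the two direct estimates above combined with $\epsilon^+_{(i+1)(\ell-1)}(x)=m$ and $\epsilon^+_{(i+1)k}(x)+\epsilon^+_{(k+1)(\ell-1)}(x)\geq m$ for $k\in[i+1,\ell-1]$, which you assert only qualitatively.
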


\begin{proof}
 From the fact that $x_{i+1}+\cdots+x_{\ell-1}>(\ell-i)m+1$ we can deduce that  for $k \in [i+1,\ell-1]$  we have $\epsilon_{(i+1)k}^+(x)+\epsilon_{(k+1)(\ell-1)}^+(x)\geq m$ and also that $\epsilon_{(i+1)(\ell-1)}^+(x)=m$ so certainly $degr^+(x) \geq (\ell-i-1)m$ so we may from now on assume $i>1$.

Next, suppose that $x_0+x_1<m$. Since we also have $x_1+x_2<m, \ldots, x_{i-1}+x_i <m$ if we were to have $j \leq i$ then $x_0+\cdots+x_j<jm$ which contradicts an assumption so $j>i$.   Now for $k \in [2,i]$, since   $(x_0-m)+\cdots+(x_j-m)>-m$  while $x_2,\ldots,x_{k-1} <m$ we have $(x_k-m)+\cdots+(x_j-m) >m-x_0-x_1$ implying that $\epsilon_{kj}^+(x)\geq m-x_0-x_1$. So far we have that $degr^+(x) \geq (\ell-i-1)m+(i-1)(m-x_0-x_1)$.  Furthermore, for $k \in [2,i]$, we have $\delta_{1k}^-(x)=x_0$ and  $\delta_{2k}^-(x)=x_1$ so $degr^-(x) \geq (x_0+x_1)(i-1)$.  Thus $degr(x) \geq (\ell-2)m$.

On the other hand, if $x_0+x_1 \geq m$ we still have $degr^+(x) \geq (\ell-i-1)m$ and since $x_1+x_2<m, \ldots, x_{i-1}+x_i <m$ we have $x_1+\cdots+x_{i}<(i-1)m$ so the previous claim can be applied to $x'=(x_0,\ldots,x_i,-)$ and we see that $degr^-(x) \geq degr^-(x') \geq (i-1)m$ so again $degr(x) \geq (\ell-2)m$.

\end{proof}

\section{Point, Pair, and the Cycle Maps}

If $x=(x_0,x_1,\ldots,x_{\ell}) \in \mathbf{D}_{\ell,m}$ in step coordinates  then $x$ may be represented in \emph{position coordinates} as $x=[a_0,a_1,\ldots,a_{\ell}]$ where $a_i=(x_i-m)+\cdots +(x_{\ell}-m)$ or equivalently $a_0=0$ and $a_i=(m-x_0)+\cdots (m-x_{i-1})$ for $i>0$. On the other hand, if $x=[a_0,\ldots,a_{\ell}]$ is written in position coordinates then  $x=(x_0,x_1,\ldots,x_{\ell-1},-)$ where $x_i=m+a_{i}-a_{i+1}$.  In terms of the physical path, $a_i$ represents how far the path is below the diagonal before the step represented by $x_i$ is taken.  The following is immediate from this definition.

\begin{lemma}
If $x=[a_0,\ldots,a_{\ell}]$ in position coordinates then 
\begin{eqnarray*}
area(x)=\sum_{i=1}^n a_i
\end{eqnarray*}
\end{lemma}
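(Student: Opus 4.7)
The plan is a direct expansion using the second, equivalent form of position coordinates supplied just before the lemma: $a_0 = 0$ and $a_i = (m - x_0) + \cdots + (m - x_{i-1})$ for $i \geq 1$. I take the upper limit $n$ in the statement to be $\ell$, matching the index range of position coordinates $[a_0,\ldots,a_{\ell}]$.

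First I would substitute this expression into $\sum_{i=1}^{\ell} a_i$ and exchange the order of summation (Fubini). This converts the double sum into a single sum indexed by $j$, where each term $(m - x_j)$ is weighted by the number of $i$ with $j < i \leq \ell$, namely $\ell - j$. The sum thus becomes $\sum_{j=0}^{\ell-1}(\ell - j)(m - x_j)$.

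Next I would split the result into a constant piece and a linear piece. The constant piece evaluates to $m \sum_{j=0}^{\ell-1}(\ell - j) = m\binom{\ell+1}{2} = M$ by the triangular-number identity. The linear piece is $\sum_{j=0}^{\ell-1}(\ell - j) x_j$, which is precisely $\ell x_0 + (\ell-1) x_1 + \cdots + 1 \cdot x_{\ell-1}$, i.e.\ the subtracted weighted sum in the definition of $area(x)$. Subtracting the linear piece from $M$ matches the definition of $area(x)$ verbatim.

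There is no real obstacle; the whole argument is a single Fubini swap together with the identity $\sum_{k=1}^{\ell} k = \binom{\ell+1}{2}$, which is why the paper labels the lemma as immediate. The only minor point worth checking is that taking $n=\ell$ is the correct reading: even if one extended the convention to include $a_{\ell+1}$, this term would equal $0$ and not affect the sum.
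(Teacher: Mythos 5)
Your computation is correct and is exactly the verification the paper leaves implicit when it calls the lemma ``immediate from this definition'': expanding $a_i=(m-x_0)+\cdots+(m-x_{i-1})$, swapping the order of summation to get $\sum_{j=0}^{\ell-1}(\ell-j)(m-x_j)$, and recognizing $m\binom{\ell+1}{2}=M$ minus the weighted sum $\ell x_0+\cdots+1\cdot x_{\ell-1}$ from the definition of $area$. Your reading of the upper limit $n$ as $\ell$ is also the intended one.
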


\begin{definition} \label{alpha}
Let  $\alpha(a,b)=min(a-b-1,m)$ if $a>b$ and $\alpha(a,b)=min(b-a,m)$ if $a\leq b$. Define $\alpha^0(a)=max(0,a-m)$.
\end{definition}
Note that if $x=[a_0,a_1,\ldots,a_{\ell}]$ we have  $\epsilon_{ij}^+(x)+\epsilon_{ij}^-(x)= \alpha(a_i,a_{j+1})$ and  $\epsilon_{0j}(x)=\alpha^0(a_{j+1})$  so that 
\begin{eqnarray*}
degr(x)=\sum_{1 \leq i<j\leq \ell}\alpha(a_i,a_j) + \sum_{2 \leq j \leq \ell} \alpha^0(a_j)
\end{eqnarray*}

\begin{example}\label{comdeg}
Recall that in example \ref{comdeg} we had that $\ell=5$ and that $m=5$ and that $x=(3,0,12,1,2,12)$. We can write $x$ in position coordinates as $x=[0,2,7,0,4,7]$ Therefore we have

\begin{eqnarray*}
degr(x)=\alpha(2,7)+\alpha(2,0)+\alpha(2,4)+\alpha(2,7)+\\
\alpha(7,0)+\alpha(7,4)+\alpha(7,7)+\\
\alpha(0,4)+\alpha(0,7)+\\
\alpha(4,7)-\\
\alpha_0(7)-\alpha_0(0)-\alpha_0(4)-\alpha_0(7)=\\
5+1+2+5+5+2+0+4+5+3-2-0-0-2=28
\end{eqnarray*}
as before.
\end{example}

We translate some earlier results into position coordinates:

\begin{corollary} \label{corfitsout}
Suppose that $x=[0,a_2,\ldots,a_{\ell}]$ is an $(\ell-1,m)$-Dyck path and choose some $a_1 \in [0,m]$, then  then $x'=[0,a_1,a_2+a_1,\ldots,a_{\ell}+a_1]$ is an $(\ell,m)$-Dyck path and  $degr(x') \leq (m-a_1)(\ell-1)+degr(x)$.  
\end{corollary}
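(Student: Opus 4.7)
The plan is to observe that Corollary \ref{corfitsout} is simply the translation of Claim \ref{fitsout} from step coordinates into position coordinates, so the strategy is to verify the dictionary between the two formulations and then invoke that claim. There is nothing fundamentally new to prove; the only work is bookkeeping between the coordinate systems.

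First I would convert $x=[0,a_2,\ldots,a_{\ell}]$ (viewed as an $(\ell-1,m)$-Dyck path with position coordinates $b_1=0$, $b_i=a_i$ for $i\geq 2$) into step coordinates $(y_1,\ldots,y_{\ell})$ using $y_i=m+b_i-b_{i+1}$ from the translation rule stated just before the Lemma. Then I would compute the step coordinates of the candidate $x'$ whose position coordinates are $[0,a_1,a_2+a_1,\ldots,a_{\ell}+a_1]$. Writing $c_0=0$, $c_1=a_1$, $c_i=a_i+a_1$ for $i\geq 2$, and applying $y'_i=m+c_i-c_{i+1}$ gives $y'_0=m-a_1$, $y'_1=m-a_2=y_1$, and $y'_i=m+a_i-a_{i+1}=y_i$ for $2\le i\le \ell-1$. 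So the step coordinates of $x'$ are obtained from those of $x$ by prepending the single step $y'_0=m-a_1$.

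Now the hypothesis $a_1\in[0,m]$ is exactly the statement that $y'_0=m-a_1$ lies in $[0,m]$, which matches the hypothesis $x_0\leq m$ of Claim \ref{fitsout} (taking $x_0:=m-a_1$ in the notation of that claim). Applying Claim \ref{fitsout} then yields both that $x'$ is a valid $(\ell,m)$-Dyck path and that
\begin{equation*}
degr(x')\leq x_0(\ell-1)+degr(x)=(m-a_1)(\ell-1)+degr(x),
\end{equation*}
which is the conclusion of the corollary.

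I do not expect any real obstacle: the only risk is an off-by-one or indexing mistake in the translation, since the two coordinate systems are indexed slightly differently (the $(\ell-1,m)$-path using indices $\{1,\dots,\ell\}$ while the $(\ell,m)$-path uses $\{0,\dots,\ell\}$). If one preferred to avoid passing through step coordinates, an alternative would be to prove the bound directly in position coordinates using the formula $degr(x)=\sum_{1\le i<j\le \ell}\alpha(a_i,a_j)+\sum_{2\le j\le \ell}\alpha^0(a_j)$ derived from Claim \ref{epsi}, by checking that each new pair involving the inserted coordinate $a_1$ contributes at most $m-a_1$; but invoking Claim \ref{fitsout} is much cleaner.
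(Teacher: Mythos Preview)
Your proposal is correct and is exactly the paper's approach: the paper's entire proof is the one-line instruction ``Translate claim \ref{fitsout} from step coordinates to position coordinates,'' and you have carried out that translation explicitly and accurately. The dictionary you wrote down (prepending the step $y'_0=m-a_1$ and matching $a_1\in[0,m]$ to the hypothesis $x_0\le m$) is precisely what is needed.
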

\begin{proof}
Translate claim \ref{fitsout} from step coordinates to position coordinates.
\end{proof}

\begin{corollary} \label{corfitsin}
Suppose that $x=[0,a_1,\ldots,a_{\ell}]$ is an $(\ell,m)$-Dyck path and that for all $2 \leq j \leq \ell $ we have $a_j \geq m $ then $x'=[0,a_2-a_1,\ldots,a_{\ell}-a_1]$ is an $(\ell-1,m)$-Dyck path and  $degr(x)=(m-a_1)(\ell-1)+degr(x')$.  
\end{corollary}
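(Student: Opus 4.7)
The plan is to deduce Corollary \ref{corfitsin} by a direct translation of Claim \ref{fitsin} from step coordinates into position coordinates; no new combinatorial content is needed. The only real task is to verify that the hypothesis, the transformation $x \mapsto x'$, and the conclusion of Claim \ref{fitsin} all carry over correctly under the change of coordinates. I expect the only subtle point to be an index shift when re-reading off the position coordinates of the truncated path.

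First, I would work out how the operation in Claim \ref{fitsin} (which replaces step coordinates $(x_0,x_1,\ldots,x_\ell)$ by $(x_1,\ldots,x_{\ell-1},-)$) appears in position coordinates. For the original path $x = [0, a_1, \ldots, a_\ell]$ the step coordinates are $x_i = m + a_i - a_{i+1}$; for the truncated path $x'$ the position coordinates are $a'_0 = 0$ and
\begin{eqnarray*}
a'_i = (m-x_1) + (m-x_2) + \cdots + (m-x_i) = a_{i+1} - a_1 \qquad (1 \leq i \leq \ell-1).
\end{eqnarray*}
Hence $x' = [0, a_2 - a_1, \ldots, a_\ell - a_1]$, matching the formula in the corollary.

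Next, I would translate the hypothesis. Since $a_{j+1} = -\bigl((x_0 - m) + \cdots + (x_j - m)\bigr)$, the Claim \ref{fitsin} hypothesis $(x_0 - m) + \cdots + (x_j - m) \leq -m$ for all $1 \leq j < \ell$ is equivalent to $a_{j+1} \geq m$ for $1 \leq j < \ell$, i.e.\ $a_j \geq m$ for all $2 \leq j \leq \ell$. This is exactly the hypothesis of Corollary \ref{corfitsin}, so Claim \ref{fitsin} applies to $x$ and guarantees that $x'$ is an $(\ell-1,m)$-Dyck path.

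Finally, I would translate the degree identity. Claim \ref{fitsin} yields $degr(x) = x_0(\ell-1) + degr(x')$, and from $x_0 = m + a_0 - a_1 = m - a_1$ this becomes
\begin{eqnarray*}
degr(x) = (m - a_1)(\ell - 1) + degr(x'),
\end{eqnarray*}
which is the claimed equality. The argument is mechanical; the only place to be careful is in the shift $a'_i = a_{i+1} - a_1$ so that the new zeroth position coordinate is $0$ as required.
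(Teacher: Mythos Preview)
Your proposal is correct and takes exactly the same approach as the paper, which simply states ``Translate claim \ref{fitsin} from step coordinates to position coordinates.'' You have carried out that translation in full detail, and each step (the formula $a'_i=a_{i+1}-a_1$, the equivalence of the hypotheses via $a_{j+1}=-\bigl((x_0-m)+\cdots+(x_j-m)\bigr)$, and the substitution $x_0=m-a_1$) is accurate.
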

\begin{proof}
Translate claim \ref{fitsin} from step coordinates to position coordinates.
\end{proof}

\begin{corollary} \label{bound3}
Let $x=[a_0,a_1,\ldots,a_{\ell}]$.  Suppose that $a_2 \leq m$ or for some $j \in (2, \ell]$ we have $a_j<m$. Let $i>0$ be minimal such that $a_i-a_{i+2} \geq -m$ and suppose $a_{i+1}-a_{\ell}>m+1$. Then $degr(x) \geq (\ell-2)m$.  
\end{corollary}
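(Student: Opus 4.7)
The plan is to translate the hypotheses of Corollary \ref{bound3} from position coordinates into step coordinates and then directly invoke Claim \ref{bound2}. Since Corollaries \ref{corfitsout} and \ref{corfitsin} were established by the same translation technique, this is the natural route.

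Using the identity $x_i = m + a_i - a_{i+1}$, I will first verify the dictionary:
\begin{itemize}
\item $x_i + x_{i+1} = 2m + a_i - a_{i+2}$, so the condition ``$x_i + x_{i+1} \geq m$'' is equivalent to ``$a_i - a_{i+2} \geq -m$''. Hence the minimal $i > 0$ in the corollary matches the minimal $i$ in the claim.
\item A telescoping sum gives $x_{i+1} + \cdots + x_{\ell-1} = (\ell - 1 - i)m + a_{i+1} - a_{\ell}$, so ``$x_{i+1} + \cdots + x_{\ell-1} > (\ell - i)m + 1$'' is equivalent to ``$a_{i+1} - a_{\ell} > m + 1$''.
\item Since $a_0 = 0$, we have $a_{j+1} = (j+1)m - (x_0 + \cdots + x_j)$, so ``$x_0 + \cdots + x_j > jm$'' is equivalent to ``$a_{j+1} < m$''. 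Ranging $j$ over $[2, \ell-1]$ gives $a_{j'} < m$ for some $j' \in (2, \ell]$.
\item In particular for $j = 1$, ``$x_0 + x_1 \geq m$'' becomes ``$a_2 \leq m$''.
\end{itemize}

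Combining the last two bullets, the disjunction ``$a_2 \leq m$ or $a_j < m$ for some $j \in (2, \ell]$'' corresponds precisely to the disjunction ``$x_0 + x_1 \geq m$ or $x_0 + \cdots + x_j > jm$ for some $j \in [2, \ell-1]$'' that appears in Claim \ref{bound2}.

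With all hypotheses translated, Claim \ref{bound2} applies and yields $degr(x) \geq (\ell-2)m$, which is the desired conclusion. The only real work is the index bookkeeping in the dictionary above; no new estimates are needed. The mildly delicate step is making sure the index shift in ``$j$ ranges over $[2, \ell-1]$'' versus ``$j$ ranges over $(2, \ell]$'' is handled correctly, since that is where an off-by-one error could creep in, but once the substitution $j' = j+1$ is made the two ranges match exactly.
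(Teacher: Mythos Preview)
Your proposal is correct and is exactly the paper's approach: the paper's entire proof of Corollary~\ref{bound3} is the single sentence ``Translate claim~\ref{bound2} from step coordinates to position coordinates.'' You have simply written out that translation explicitly, and your dictionary (including the index shift $j' = j+1$) is accurate.
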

\begin{proof}
Translate claim \ref{bound2} from step coordinates to position coordinates.
\end{proof}

Now we can define the cycle maps:

\begin{definition}
If $x=[a_0,\ldots,a_{\ell}]$ in position coordinates then we define 
\begin{eqnarray*}
\mathtt{cycleright}(i,x)=[a_0,\ldots,a_i,a_{\ell}+1,a_{i+1},a_{i+2},\ldots,a_{\ell-2},a_{\ell-1}]
\end{eqnarray*}
\end{definition}

\begin{definition}
If $x=[a_0,\ldots,a_{\ell}]$ in position coordinates then we define 
\begin{eqnarray*}
\mathtt{cycleleft}(i,x)=[a_0,\ldots,a_i,a_{i+2
},a_{i+3},\ldots a_{\ell-1},a_{\ell}, a_{i+1}-1]
\end{eqnarray*}
\end{definition}

The following are easy to check:

\begin{itemize}
\item $\mathtt{cycleright}(i,\mathtt{cycleleft}(i,x))=x$
\item $\mathtt{cycleleft}(i,\mathtt{cycleright}(i,x))=x$
\item $area(\mathtt{cycleright}(i,x))=area(x)+1$
\end{itemize}

\begin{definition}
Let $x=[a_0,\ldots,a_{\ell}]$ we define $point(x)$ to be the minimal $r \leq \ell$ such that $a_r-a_{\ell}> -m$.
\end{definition}
\begin{definition}
Let $x=(x_0,\ldots,x_{\ell})$ we define $pair(x)$ to be the minimal $r < \ell$ such that $a_r-a_{r+2} \geq -m$ (where the convention is $a_s=0$ for $s>\ell$).  
\end{definition}

\begin{definition}
 Let $x=[a_0,\ldots,a_{\ell})$ and $point(x)=r$.  If $r<\ell$ and for all $0 \leq i \leq r-2$ we have $a_i-a_{i+2}<-m$  we say $x$ is \textbf{rightable} (at $r$) and define $\mathtt{right}(x)=\mathtt{cycleright}(r,x)$.
\end{definition}

\begin{definition}
Let $x=[a_0,\ldots,a_{\ell}]$ and $pair(x)=r$. If $a_{r+1}-a_{\ell} \leq m+1$ we say $x$ is \textbf{leftable} (at $r$) and define $\mathtt{left}(x)=\mathtt{cycleleft}(r,x)$.
\end{definition}

\begin{remark}
When $m=1$, the maps $\mathtt{right}$ and $\mathtt{left}$ essentially reduce to the maps $\mathtt{\nu^{-1}}$ and $\mathtt{\nu}$ of \cite{LLL18} and \cite{HLLL20}.
\end{remark}

\begin{example}
Let $m=5$ and let $x^0=[0,4,9,12,6,6,6,7,12,8]$. For $i<0$ define $x^{i}=\mathtt{left}(x^{i+1})$ if $x^{i+1}$ is leftable and for $i>0$ define $x^i=\mathtt{right}(x^{i-1})$ if $x^{i-1}$ is rightable. All such $x^{i}$ are shown below and to its right the value of the tuple $(pair(x^i),point(x^i))$.  Note that $x^{-5}$ is unleftable and $x^6$ is unrightable.  
\setcounter{MaxMatrixCols}{23}
\begin{eqnarray*}
\begin{matrix}
x^{-5}&0&4&7&12&8&11&8&5&5&5&(2,1)\\
x^{-4}&0&4&6&7&12&8&11&8&5&5&(1,1)\\
x^{-3}&0&4&6&6&7&12&8&11&8&5&(1,1)\\
x^{-2}&0&4&6&6&6&7&12&8&11&8&(1,1)\\
x^{-1}&0&4&9&6&6&6&7&12&8&11&(1,2)\\
\textcolor{red}{x^{0}}&\textcolor{red}{0}&\textcolor{red}{4}&\textcolor{red}{9}&\textcolor{red}{12}&\textcolor{red}{6}&\textcolor{red}{6}&\textcolor{red}{6}&\textcolor{red}{7}&\textcolor{red}{12}&\textcolor{red}{8}&\textcolor{red}{(2,1)}\\
x^{1}&0&4&9&9&12&6&6&6&7&12&(1,2)\\
x^{2}&0&4&9&13&9&12&6&6&6&7&(2,1)\\
x^{3}&0&4&8&9&13&9&12&6&6&6&(1,1)\\
x^{4}&0&4&7&8&9&13&9&12&6&6&(1,1)\\
x^{5}&0&4&7&7&8&9&13&9&12&6&(1,1)\\
x^{6}&0&4&7&7&7&8&9&13&9&12&(1,6)\\
\end{matrix}
\end{eqnarray*}
\end{example}

\begin{claim}\label{rightcheck}
Suppose $y\in \mathbf{D}_{\ell,m}$ is rightable at $r$ and $x=\mathtt{right}(y)$.  Then we have:
\begin{enumerate}
\item $x \in \mathbf{D}_{\ell,m} \setminus \mathbf{T}_{\ell,m}$
\item $x$ is leftable at $r$ and $\mathtt{left}(x)=y$.
\item $degr(x)=degr(y)$
\item $area(x)=area(y)+1$
\end{enumerate}
\end{claim}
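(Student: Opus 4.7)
Part (4) is immediate from the identity $area(\mathtt{cycleright}(r,y)) = area(y)+1$ listed just before the definition of $point$, which holds for any $y$. For part (1), write $x = [b_0,\ldots,b_\ell]$; non-negativity of the $b_i$ is clear since each entry is either some $a_j$ or $a_\ell+1$, so I only need to verify the Dyck step condition $b_{i+1} - b_i \leq m$ at the two new ``seams.'' At $i = r$ the inequality $a_\ell + 1 - a_r \leq m$ rephrases the defining inequality $a_r > a_\ell - m$ of $point(y) = r$. At $i = r+1$ (vacuous when $r = \ell-1$) the inequality $a_{r+1} \leq a_\ell + m + 1$ follows from the Dyck step $a_{r+1} \leq a_r + m$ combined with the bound $a_r \leq a_\ell$, which in turn comes from $a_{r-1} \leq a_\ell - m$ plus another Dyck step when $r \geq 1$, or trivially when $r = 0$. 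To rule out $x \in \mathbf{T}_{\ell,m}$ I split on $r$: when $r = 0$, $b_1 = a_\ell + 1 \geq 1$; when $r = 1$, $b_1 = a_1 > a_\ell - m \geq 0$; when $r \geq 2$, the rightable condition applied at $i = 0$ gives $a_2 > m$, and a Dyck step yields $a_1 \geq a_2 - m > 0$.

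For part (2), I would verify $pair(x) = r$ by observing that the rightable hypothesis transfers to $b_i - b_{i+2} = a_i - a_{i+2} < -m$ for $i \leq r-2$; the new gap $b_{r-1} - b_{r+1} = a_{r-1} - a_\ell - 1 \leq -m - 1$ uses $a_{r-1} \leq a_\ell - m$; and $b_r - b_{r+2} = a_r - a_{r+1} \geq -m$ is a Dyck step on $y$. The leftability inequality $b_{r+1} - b_\ell \leq m + 1$ is either $a_\ell - a_{\ell-1} \leq m$ (a Dyck step on $y$) when $r < \ell - 1$, or trivially $0$ when $r = \ell - 1$. The equality $\mathtt{left}(x) = y$ then reduces to the definitional inverse relation between $\mathtt{cycleleft}$ and $\mathtt{cycleright}$ at the common index $r$.

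Part (3) is the substantive one. My plan is to use the position-coordinate expression from Claim \ref{split}, $degr = \sum_{1 \leq i < j \leq \ell} \alpha(a_i, a_j) - \sum_{2 \leq j \leq \ell} \alpha^0(a_j)$, and partition the pairs into a ``fixed'' block $\{1,\ldots,r\}$ and a ``rotating'' block $\{r+1,\ldots,\ell\}$. Fixed-fixed pairs contribute identically to $degr(x)$ and $degr(y)$. Rotating-rotating differences collapse to $\sum_{k=r+1}^{\ell-1}[\alpha(a_\ell+1, a_k) - \alpha(a_k, a_\ell)]$, and a short case check on the definition of $\alpha$ (split on $a_k \leq a_\ell$ versus $a_k \geq a_\ell + 1$) shows each summand is zero. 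The only surviving changes are $\sum_{i=1}^{r}[\alpha(a_i, a_\ell+1) - \alpha(a_i, a_\ell)]$ on the $\alpha$ side, together with the $\alpha^0$ shift $\alpha^0(a_\ell+1) - \alpha^0(a_\ell)$ (for $r \geq 1$) or $\alpha^0(a_1) - \alpha^0(a_\ell)$ (for $r = 0$). A case analysis of $\alpha(a_i, c+1) - \alpha(a_i, c)$ will show it lies in $\{-1, 0, +1\}$ with value $+1$ exactly when $a_i \in [c - m + 1, c]$; the constraints from $point(y) = r$ (namely $a_i \leq a_\ell - m$ for $i < r$ and $a_\ell - m + 1 \leq a_r \leq a_\ell$) then force this sum to equal $1$ when $r \geq 1$ and $0$ (vacuously) when $r = 0$. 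This matches the $\alpha^0$ shift: when $r \geq 1$ we have $a_\ell \geq m$ so $\alpha^0(a_\ell+1) - \alpha^0(a_\ell) = 1$; when $r = 0$ we have $a_\ell < m$ and $a_1 \leq m$, so both $\alpha^0$ values vanish. After the minus sign in the formula, the two contributions cancel and $degr(x) = degr(y)$.

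The main obstacle will be this case analysis: $\alpha$ is asymmetric and piecewise linear across several regions, the function $\alpha^0$ has a threshold at $m$, and the arithmetic balance depends subtly on whether $a_\ell$ sits above or below $m$, which is itself determined by whether $r = 0$. The rest of the proof is essentially bookkeeping against the definitions of $point$, $pair$, rightable, and leftable.
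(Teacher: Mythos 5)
Your proposal is correct and takes essentially the same route as the paper's proof: the same seam-by-seam verification of the step and $point$/$pair$ conditions for parts (1), (2), (4), and for part (3) the same $\alpha$/$\alpha^0$ bookkeeping, where the identity $\alpha(u,v)=\alpha(v+1,u)$ kills the pairs between the moved coordinate and the entries after position $r$, the single $+1$ at $i=r$ (using $a_\ell-m<a_r\leq a_\ell$) survives, and it cancels against the $\alpha^0$ shift, with the $r=0$ case handled by $a_\ell<m$. The only nitpick is the edge case $r=\ell-1$ in your $pair(x)$ check, where $b_{r+2}=0$ by convention rather than $a_{r+1}$, but the needed inequality $b_r-b_{r+2}\geq -m$ is trivial there, exactly as the paper notes.
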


\begin{proof}  We write $y=[c_0,c_1,\ldots,c_{\ell}]$ and $x=[a_0,a_1,\ldots,a_{\ell}]$

\begin{enumerate}

\item 
We must first check that all $a_{i}-a_{i+1}\geq -m$ for $i \in [0,\ell)$ (otherwise $x$ would contain negative step coordinates).  If  $i \notin \{r,r+1\}$ this follows from the fact that $c_{j}-c_{j+1}\geq m$ for all $j \in [0,\ell)$.  Next if $i=r$ then $a_{i}-a_{i+1}=c_{r}-c_{\ell}-1\geq -m$ by the assumption that $point(y)=r$.  Finally we must check $a_i-a_{i+1} \geq -m$ for $i=r+1$.  Since we are assuming $i<\ell$ we have $r< \ell-1$.  If $r>0$, then by minimality of $r$ we have $c_{\ell}-c_{r-1} \geq m$ which in conjunction with $c_{r-1}-c_{r} \geq -m $ and $c_{r}-c_{r+1} \geq -m$ gives $c_{\ell}-c_{r+1} \geq -m$ so $a_{r+1}-a_{r+2}=c_{\ell}+1-c_{r+1}> -m$.  If $r=0$ then since $c_0-c_1=0-c_1 \geq -m$ we have $a_{r+1}-a_{r+2}=a_1-a_2=c_{\ell}+1-c_{1} > -m$. 

Next we must check that $x \notin \mathbf{T}_{\ell,m}$ or equivalently that $a_1 \neq 0$.  If $point(y)=0$ then $a_1=c_{\ell}+1>0$.  If $point(y)\geq 2$ and $a_1=0$ then $c_1=0=c_0$ so, since $c_1-c_2 \geq -m$ also  $c_0-c_2 \geq -m$ which would imply $y$ is not rightable. If $point(y)= 1$ and $a_1=0$ then $c_1=0=c_0$ and so since $c_1-c_{\ell} >-m$ we also have $c_0-c_{\ell} >-m$ which contradicts $point(y)=1$.  

\item
We have $a_i-a_{i+2}=c_i+c_{i+2} <-m$ for $0 \leq i \leq r-2$ by the fact that $y$ is rightable at $r$.  Further $a_{r-1}-a_{r+1}=c_{r-1}-c_{\ell}-1<c_{r-1}-c_{\ell} \leq -m$ by minimality of $r$.   If $r=\ell-1$ we have by convention that $a_r-a_{r+2} =a_r \geq 0 \geq -m$.  If $r < \ell-1$ then $a_r-a_{r+2}=c_r-c_{r+1} \geq-m$.  Thus $pair(x)=r$. Further $a_{r+1}=c_{\ell}+1>0$.  Finally if $r=\ell-1$ then  $a_{r+1}-a_{\ell}=0 \leq m+1$. Otherwise $a_{r+1}-a_{\ell}=c_{\ell}+1-c_{\ell-1} \leq m+1$.  Therefore we see $x$ is leftable at $r$ and so $\mathtt{left}(x)=y$.

\item

Using definition \ref{alpha} and applying claim \ref{split} we see that:

\begin{eqnarray*}
degr(y)-degr(x) =\\
\sum_{1 \leq i<j\leq \ell}\alpha(c_i,c_j) - \sum_{2 \leq j \leq \ell} \alpha^0(c_j)
- \sum_{1 \leq i<j\leq \ell}\alpha(a_i,a_j) + \sum_{2 \leq j \leq \ell} \alpha^0(a_j)=\\
\sum_{1 \leq j \leq r} \alpha(c_j,c_\ell)- \alpha(c_j,c_{\ell}+1)+\sum_{r < j < \ell} \alpha(c_j,c_{\ell}) - \alpha(c_{\ell}+1,c_j) \\
-\alpha^0(c_{\ell})+\alpha^0(c_{\ell}+1)
\end{eqnarray*}

First suppose $r>0$.  For $j<r$ the fact that $y$ is rightable at $r$ implies  that $c_j-c_{\ell} \leq -m$ so $\alpha(c_j,c_\ell)=m= \alpha(c_j,c_{\ell}+1)$. Now $c_r-c_{\ell} >-m$ and  $c_r-c_{\ell} \leq 0$ since  $c_{r}-c_{r-1} \leq m$ and $c_{r-1}-c_{\ell} \leq -m$  so $\alpha(c_r,c_\ell)- \alpha(c_r,c_{\ell}+1)=-1$.  For $j>r$ we have $\alpha(c_j,c_{\ell}) = \alpha(c_{\ell}+1,c_j)$ by definition of $\alpha$.  Finally, we have  $\alpha^0(c_{\ell}+1)-\alpha^0(c_{\ell})=1$ since $r>0$ implies $c_0-c_{\ell} \leq -m$. i.e., $c_{\ell} \geq m$.

If $r=0$ then first sum is empty and the second sum completely cancels and we are left with  $\alpha^0(c_{\ell}+1)-\alpha^0(c_{\ell})$, but both terms in this difference are $0$ since $r=0$ implies that $c_0-c_{\ell} >-m$ so $c_{\ell}<m$.  Therefore, in any case we have $degr(y)=degr(x)$.  
\item Finally, it is trivial to check $area(x)=area(y)+1$.
\end{enumerate}
\end{proof}

\begin{claim}\label{leftcheck}
Suppose $x\in \mathbf{D}_{\ell,m} \setminus \mathbf{T}_{\ell,m}$ is leftable at $r$ and $y=\mathtt{left}(x)$.  Then we have:
\begin{enumerate}
\item $y \in \mathbf{D}_{\ell,m}$
\item $y$ is rightable at $r$ and $\mathtt{right}(y)=x$.
\item $degr(y)=degr(x)$
\item $area(y)=area(x)-1$
\end{enumerate}
\end{claim}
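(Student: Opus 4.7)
The plan is to invert the argument of Claim \ref{rightcheck}: once we verify parts (1) and (2), parts (3) and (4) follow by applying the previous claim to $y$. So the real work is checking that $y$ is a legitimate Dyck path and that it is rightable at $r$ with $\mathtt{right}(y)=x$.

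Write $x=[a_0,\ldots,a_\ell]$ with $\mathrm{pair}(x)=r$, so that
\[
y=[c_0,\ldots,c_\ell]=[a_0,\ldots,a_r,\,a_{r+2},a_{r+3},\ldots,a_{\ell-1},a_\ell,\,a_{r+1}-1].
\]
For (1), the only Dyck inequalities $c_i-c_{i+1}\geq -m$ not inherited from $x$ occur at $i=r$ and $i=\ell-1$. At $i=r$ we get $a_r-a_{r+2}\geq -m$, which is exactly $\mathrm{pair}(x)=r$; at $i=\ell-1$ we get $a_\ell-(a_{r+1}-1)\geq -m$, which is exactly the leftability hypothesis $a_{r+1}-a_\ell\leq m+1$. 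We also need $c_\ell\geq 0$, i.e.\ $a_{r+1}\geq 1$. If $r=0$ this is $a_1\geq 1$, which holds since $x\notin \mathbf{T}_{\ell,m}$ (in position coordinates this means $a_1\neq 0$). If $r\geq 1$, minimality of $\mathrm{pair}(x)=r$ gives $a_{r-1}-a_{r+1}<-m$, so $a_{r+1}>m\geq 1$.

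For (2), we must check $\mathrm{point}(y)=r$ and $c_i-c_{i+2}<-m$ for $i\in[0,r-2]$, and then verify $\mathtt{right}(y)=x$. The condition $c_i-c_{i+2}<-m$ for $i<r-1$ is immediate from $a_i-a_{i+2}<-m$, inherited via the minimality of pair. For $\mathrm{point}(y)=r$, compute $c_r-c_\ell=a_r-a_{r+1}+1\geq -m+1$, so the condition holds at index $r$. For $i<r$ we need $c_i-c_\ell\leq -m$, i.e.\ $a_i\leq a_{r+1}-m-1$. The odd-parity chain $a_{r-1}\leq a_{r+1}-m-1$, $a_{r-3}\leq a_{r-1}-m-1$, \dots\ comes directly from $a_i-a_{i+2}<-m$; for the even-parity chain I first use the Dyck inequality $a_r\leq a_{r-1}+m\leq a_{r+1}-1$, then $a_{r-2}\leq a_r-m-1\leq a_{r+1}-m-2$, and iterate. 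When $r=0$ there are no such $i$ to check, and $c_0-c_\ell=1-a_1>-m$ follows from $a_1\leq m$. Finally, $\mathtt{right}(y)$ reinserts $c_\ell+1=a_{r+1}$ between $c_r=a_r$ and $c_{r+1}=a_{r+2}$, reconstructing $x$ coordinate-for-coordinate.

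Once (1) and (2) are in hand, apply Claim \ref{rightcheck} to $y$: since $y$ is rightable at $r$ with $\mathtt{right}(y)=x$, parts (3) and (4) of that claim give $\mathrm{degr}(x)=\mathrm{degr}(y)$ and $\mathrm{area}(x)=\mathrm{area}(y)+1$, which are exactly parts (3) and (4) here. I expect the main obstacle to be the even-parity bound on $a_i$ needed for $\mathrm{point}(y)=r$, since the pair condition does not directly compare $a_r$ with $a_{r+1}$; this is resolved by piggybacking on the Dyck inequality $a_r\leq a_{r-1}+m$ to bridge parities.
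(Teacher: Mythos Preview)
Your proof is correct and follows essentially the same approach as the paper: verify (1) and (2) directly, then invoke Claim~\ref{rightcheck} for (3) and (4). The one place you differ slightly is in showing $\mathrm{point}(y)=r$: the paper argues that if some $a_i-a_{r+1}\geq -m$ for $i<r$ then the telescoping sum $\sum_{j=i}^{r}(a_j-a_{j+1}+m)\geq m(r-i)$ forces, by pigeonhole over the $r-i$ overlapping consecutive pairs, some $a_{i'}-a_{i'+2}\geq -m$, contradicting $\mathrm{pair}(x)=r$; you instead run an explicit two-parity descent, bridging parities via the Dyck bound $a_r\leq a_{r-1}+m$. Both arguments are elementary and equivalent in strength; your version is perhaps a touch more concrete, while the paper's avoids splitting into cases.
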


\begin{proof}  We write $x=[a_0,a_1,\ldots,a_{\ell}]$and $y=[c_0,c_1,\ldots,c_{\ell}]$.

\begin{enumerate}

\item 
We must first check that $c_{i}-c_{i+1}\geq -m$ for $i \in [0,\ell)$.  If  $i \notin \{r,\ell-1\}$ this follows from the fact that $a_{j}-a_{j+1}\geq m$ for all $j \in [0,\ell)$. Further if $r=\ell-1$ we need only still check that $c_{\ell-1}-c_{\ell} \geq -m$ but we have $c_{\ell-1}-c_{\ell}= a_{\ell-1}-a_{\ell} +1 > -m$ so we may suppose $r < \ell-1$.  Now $c_{r}-c_{r+1}=a_{r}-a_{r+2} \geq -m$ by the assumption that $pair(x)=r$.  Next, $c_{\ell-1}-c_{\ell}=a_{\ell}-a_{r+1}+1 \geq -m$ since $a_{r+1}-a_{\ell} \leq m+1$ by the fact $x$ is leftable at $r$.  

We must also check that $c_{\ell}=a_{r+1}-1 \geq 0$.  Suppose that $a_{r+1}=0$. If $r=0$ this implies $x \in \mathbf{T}_{\ell,m}$ which is contrary to assumption and if $r>0$ it implies $a_{r-1}-a_{r+1} \geq -m$ contradicting $pair(x)=r$.

\item
We have $c_r-c_{\ell}=a_r-a_{r+1}+1 >-m$.  If for some $i<r$ we have $c_i-c_{\ell}>-m$ that means that $a_i-a_{r+1}\geq -m$ or that $(a_i-a_{i+1}+m)+\cdots+(a_r-a_{r+1}+m) \geq m(r-i)$ which implies there must be some $i' \in [i,r-1]$ with $(a_{i'}-a_{i'+1}+m)+(a_{i+1}'-a_{i'+2}+m) \geq m$, which is to say $a_{i'}-a_{i'+2} \geq -m$, contradicting the fact that $pair(x)=r$.  It follows that $point(y)=r$.  Further for all $i \leq r-2$ we have $c_i-c_{i+2}=a_i-a_{i+2} < -m$ by minimality of $r$. Therefore $y$ is rightable at $r$ and $\mathtt{right}(y)=x$.

\item

This follows from the above and claim \ref{rightcheck}

\item Finally, it is trivial to check $area(y)=area(x)-1$.
\end{enumerate}
\end{proof}

\section{Bounded Partitions and $0-1$ Matrices}

In this section we define two sets that are in bijective correspondence with maximal $(\ell,m)$-Dyck paths of degree less than $(\ell-1)m$.  These constructions will be essential to the next section and be used throughout the rest of the paper.  

\begin{definition}
Let $\mathbf{P}_{\ell-1}$ be the set of all partitions $\lambda=(\lambda_1,\ldots,\lambda_r)$ with longest part at most $\ell-1$ that is $\lambda_1 \leq \ell-1$. 
\end{definition}

\begin{definition}
 For $\lambda \in \mathbf{P}_{\ell-1}$ define the height of $\lambda$, or $h(\lambda)$, to be the number of rows of the partition.  Define the $(\ell-1)$-width of $\lambda$ or $w^{\ell-1}(\lambda)$ to be the sum of the length of all of the $(\ell-1)$-hook rows of $\lambda$.  The $(\ell-1)$-hook rows (or just hook rows, when $\ell$ is clear from context) of $\lambda=(\lambda_1,\ldots,\lambda_r)$ are determined as follows: The first row of the partition is a hook row, and, if row $i$  is a hook row so is row $i+\ell-\lambda_i$. Finally the size of $\lambda$ or $|\lambda|$ is the sum of the lengths of all rows.  
\end{definition}

\begin{definition}
Define $\mathbf{P}_{\ell-1}^{<d}$ to be the subset of $\mathbf{P}_{\ell-1}$ of size less than $d$.
\end{definition}

\begin{definition}
Given a matrix, $M$, we define $\mathtt{rowRead}(M)$ to be the list attained by concatenating the rows of $M$ (read left to right) beginning with the top row and moving down.  Similarly,we define $\mathtt{colRead}(M)$ to be the list attained by concatenating the columns of $M$ (read top to bottom) beginning with the leftmost column and moving right.  
\end{definition}

\begin{definition}\label{matdef}
We define $\mathbf{M}_{m \times \ell}$ to be the set of $m \times \ell$ matrices $M$, with the following properties:

\begin{enumerate}

\item The entries of $M$ belong to the set $\{0,1,*\}$.

\item Any $*$ appearing in $\mathtt{rowRead}(M)$ lies to the right of any $0$ or $1$ appearing in $\mathtt{rowRead}(M)$.

\item The first entry of $\mathtt{rowRead}(M)$ is $0$.

\item The rightmost entry of $\mathtt{rowRead}(M)$ which is not a $*$ is a $1$.  

\item There is no appearance of a $0$ that lies directly above a $1$ in $M$.

\end{enumerate} 
\end{definition}

\begin{definition}
We say that a $0$ and a $1$ are in contact in $M$ if either they appear in the same row with the $0$ to the left of the $1$ or else if the $1$ appears in the row below the $0$ and weakly to its left (equivalently, strictly to its left since no $0$ may lie above a $1$).  We define $|M|$ to be the total number of $(0,1)$ pairs that are in contact.
\end{definition}

\begin{definition}
We define $h(M)$ to be the total number of $0$s in $M$ and $w(M)$ to be the total number of $1$s in $M$.
\end{definition}

\begin{example}
We have $M=\begin{bmatrix}
0 & 1 & 1 & 1  & 1 & 1 & 1 & 0 & 1\\
0 & 1 & 1 & 1 & 0 & 1 & 1 & 0 & 1\\
0 & 0 & 1 & 1  & 0 & 1 & 0 & 0 & 1\\
0 & 0 & 1 & 0 & 0 & 1 & * & * & *\\
* & * & * & *  & * & * & * & * & *\\
\end{bmatrix} \in \mathbf{M}_{5 \times 9}$ and we have $h(M)=14$ and $w(M)=19$ and $|M|=51$.
\end{example}

\begin{definition}
We say  $M \in \mathbf{M}_{m \times \ell}^{<d}$ if $M \in \mathbf{M}_{m \times \ell}$ and $|M|<d$. 
\end{definition}

\begin{claim}\label{g}
There is a bijection $g: \mathbf{P}_{\ell-1}^{<(\ell-1)m} \rightarrow \mathbf{M}_{m \times \ell}^{<(\ell-1)m}$ such that if $g(\lambda)=M$ then:
\begin{itemize}

\item $h(\lambda)=h(M)$.

\item $w^{\ell-1}(\lambda)=w(M)$.

\item $|\lambda|=|M|$

\end{itemize}

\end{claim}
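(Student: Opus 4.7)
The plan is to define $g$ through its inverse and then verify the three statistics. Given $M \in \mathbf{M}_{m \times \ell}$, list its $0$'s in $\mathtt{rowRead}(M)$ order as $(i_1,j_1),\ldots,(i_h,j_h)$, and let $\lambda_k$ be the number of $1$'s of $M$ in contact with the $0$ at $(i_k,j_k)$. Define $g^{-1}(M) = (\lambda_1,\ldots,\lambda_h)$.

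First I will check that $g^{-1}(M)$ is a valid partition in $\mathbf{P}_{\ell-1}$. Each $\lambda_k$ is at most $\ell-1$ since a $0$ at column $j$ has at most $\ell-j$ possible contacts in its row plus $j-1$ possible contacts in the row below. The non-increasing property is the main technical step: for two consecutive $0$'s in $\mathtt{rowRead}$, the same-row case reduces, column by column, to comparing $1$'s in rows $i$ and $i+1$ over a range of columns, and property (e) of Definition \ref{matdef} (no $0$ directly above a $1$) forces each $1$ in row $i+1$ in that range to sit beneath a $1$ in row $i$. The cross-row case uses that $(1,1)=0$ implies column $1$ of $M$ is entirely $0$ in its non-$*$ portion, so the first $0$ of each row occurs at column $1$ with contact count equal to the total number of $1$'s in that row of $M$.

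The equalities $h(\lambda) = h(M)$ and $|\lambda| = |M|$ are immediate from the construction. For $w^{\ell-1}(\lambda) = w(M)$, the key observation is that the hook rows of $\lambda$ correspond (in the listing of $0$'s) exactly to positions of the form $(i,1)$. I will prove this by induction: once we know the first $0$ of row $i$ has contact count $o_i$ (the number of $1$'s in row $i$ of $M$), the hook recursion $h \mapsto h + \ell - \lambda_h = h + \ell - o_i$ advances to the row of $\lambda$ indexing the first $0$ of row $i+1$ when row $i$ of $M$ is full (since then $\ell - o_i$ equals the block size $z_i$), and overshoots $h(\lambda)$ when row $i$ of $M$ is the partial last row (since then $\ell - o_i$ exceeds $z_i$ by the number of $*$'s in that row). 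Summing hook-row lengths then yields $\sum_i o_i = w(M)$. The size bound $|\lambda| < (\ell-1)m$ if and only if $|M| < (\ell-1)m$ follows immediately from $|\lambda| = |M|$.

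For bijectivity I will construct $g$ explicitly by inverting the above. Given $\lambda$, determine its hook rows and split $\lambda$ into blocks; block $i$ prescribes that row $i$ of $M$ contains exactly $z_i$ $0$'s and $o_i$ $1$'s. The column positions of the $0$'s are found bottom-up: in the last non-$*$ row of $M$, the contact count of a $0$ involves only same-row $1$'s to its right, yielding an explicit formula for its position; each earlier row is then fixed by the contact-count formula using the already-constructed row below. A final verification that the resulting $M$ satisfies properties (a)-(e) and that the two maps are mutually inverse completes the proof. The main obstacle will be the monotonicity argument and the consistency of the bottom-up construction; both hinge on the column-structure consequence of property (e), namely that in each column all $1$'s lie above all $0$'s in the non-$*$ portion.
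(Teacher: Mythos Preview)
Your plan is essentially the paper's own proof run in the opposite direction: the paper first builds $g$ forward (tracing the boundary of $\lambda$ to get $01$-sequences $s_1,\ldots,s_n$ between hook rows, then filling the matrix bottom-up by inserting $s_k$ into the columns of row $k$ not already forced to be $1$ by the row below), and then observes that the inverse is exactly your contact-count map $M\mapsto(\lambda_1,\ldots,\lambda_{h(M)})$. Your monotonicity argument via property~(e), your identification of hook rows with the indices of the leftmost $0$ in each row, and your bottom-up reconstruction of $M$ are all present in the paper, just in a different order of exposition.

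One point your plan skips that you will need to address explicitly: when you construct $g(\lambda)$ from $\lambda$, you must check that the number $n$ of hook-row blocks satisfies $n\le m$, so that the matrix actually fits into $m$ rows. This is the only place the hypothesis $|\lambda|<(\ell-1)m$ is genuinely used in the forward direction (not merely for the trivial ``iff'' on sizes). The paper's argument is that each of the first $m$ hook-row blocks contributes at least $\ell-1$ to $|\lambda|$ (the hook row itself has length $o_i$, the remaining $z_i-1$ rows in the block have length $\ge 1$, and $o_i+z_i=\ell$), so $n>m$ would force $|\lambda|\ge(\ell-1)m$. Make sure this step appears in your write-up; otherwise your forward construction is not a priori well-defined on $\mathbf{P}_{\ell-1}^{<(\ell-1)m}$.
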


\begin{proof}

Let $\lambda \in \mathbf{P}_{\ell-1}^{<(\ell-1)m}$.  Consider the Young diagram associated to the partition $\lambda$.  Beginning in the top right corner and moving to the bottom left corner trace the (lower right) boundary of $\lambda$.  As you do this, create a sequence composed of the symbols $\{0,1,|\}$ by starting with a $|$ and then recording a $0$ for each downward step and a $1$ for each leftward step until you arrive at the (top right corner of the) next $(\ell-1)$-hook row.  A this point record a $|$ and continue the process.  The process terminates when you reach the bottom left corner of $\lambda$.  You now have a sequence of $0$s and $1$s divided up into $n$ parts, $s_1,\ldots,s_n$ by the $|$ symbols where $n$ is the number of hook rows of $\lambda$.   We will denote the number of $0$s in $s_k$ by $z_k$ and the number of $1$s in $s_k$ by $u_k$. We denote the indices of the hook rows of $\lambda$ by $h_1,\ldots,h_n$.

\begin{example}
If $\ell=9$ and $\lambda=(8,7,5,5,4,3,3,2,1,1)$ then $\lambda \in \mathbf{P}_{\ell-1}$. The hook rows are rows $h_1=1$, $h_2=2$, $h_3=4$, and $h_4=8$ and are each marked by a $\bullet$ below.  We have $h(\lambda)=10$ and $w^{\ell-1}(\lambda)=22$ and $|\lambda|=39$.   \\
\begin{eqnarray*}
\begin{Young}
&&&&&&&${\bullet}$\cr
&&&&&&${\bullet}$\cr
&&&&\cr
&&&&${\bullet}$\cr
&&&\cr
&&\cr
&&\cr
&$\bullet$\cr
\cr
\cr
\end{Young}
\end{eqnarray*}
Tracing the boundary of $\lambda$ gives the sequence $|01|0110|0101001|01001$. We have $z_1=1$, $z_2=2$, $z_3=4$, and $z_4=3$ while $u_1=1$, $u_2=2$, $u_3=3$, and $u_4=2$. 
\end{example}

Now, start with an empty $m \times \ell$ matrix. We now perform $n$ steps.  In step $1$  place the sequence $s_n$ in row $n$ of $M$ flushed to the left.  (This is possible since $s_n$ has at most $\ell$ terms by construction and since $n \leq m$.  Indeed, if $n>m$ then the first $m$ hook rows of $\lambda$ contribute at least $\ell-1$ to $|\lambda|$ contradicting $|\lambda|<(\ell-1)m$.)  Then, for each $j$ such that $M_{n,j}$ contains a $1$, place a $1$ in $M_{i,j}$ for each $i<n$.   Now suppose that you have completed steps $1,2,\ldots,n-k$.  To perform step $n-k+1$ consider the empty positions in row $k$ and fill these (left to right) with the terms found in the sequence $s_{k}$ until there are no more empty positions or until you run out of  terms. Then,  for each $j$ such that $M_{k,j}$ contains a $1$, place a $1$ in $M_{i,j}$   (if there is not already one) for each $i<n$.  When all $n$ steps are complete, fill the remaining positions with $*$s.

\begin{example} Again let us take $\lambda=(8,7,5,5,4,3,3,2,1,1)$ which gave us the sequence  $|01|0110|0101001|01001$.  Following the steps outlined above creates the following matrix. \\
\small{
\begin{eqnarray*}
\begin{bmatrix}
 &  &  &   &  &  &  &  & \\
 &  &  &  &  & &  &  & \\
 &  &  &  &  &  &  &  & \\
0&1&0&0&1&\phantom{1}  &  &  & \\
 \phantom{1}&   \phantom{1} & \phantom{1}   & \phantom{1}   & \phantom{1}  &  \phantom{1} &  \phantom{1} &  \phantom{1} &  \phantom{1}\\
\end{bmatrix} \longrightarrow
\begin{bmatrix}
 & 1 &  &  & 1 &  &  &  & \\
 & 1 &  & & 1 & &  &  & \\
 & 1 &  & & 1  &  &  &  & \\
0& 1 & 0 & 0  & 1 &  &  &  & \\
 \phantom{1}&   \phantom{1} & \phantom{1}   & \phantom{1}   & \phantom{1}  &  \phantom{1} &  \phantom{1} &  \phantom{1} &  \phantom{1}\\
\end{bmatrix} \longrightarrow
\end{eqnarray*}
\begin{eqnarray*}
\begin{bmatrix}
 & 1 &  &  & 1 &  &  &  & \\
 & 1 &  & & 1 & &  &  & \\
 0 & 1 & 1  & 0 & 1  & 1 & 0 & 0 & 1\\
0& 1 & 0 & 0  & 1 &  &  &  & \\
 \phantom{1}&   \phantom{1} & \phantom{1}   & \phantom{1}   & \phantom{1}  &  \phantom{1} &  \phantom{1} &  \phantom{1} &  \phantom{1}\\
\end{bmatrix}\longrightarrow
\begin{bmatrix}
 & 1 &  1 &  & 1 &  1 &  &  & 1 \\
 & 1 & 1 & & 1 & 1&  &  & 1 \\
 0 & 1 & 1  & 0 & 1  & 1 & 0 & 0 & 1\\
0& 1 & 0 & 0  & 1 &  &  &  & \\
 \phantom{1}&   \phantom{1} & \phantom{1}   & \phantom{1}   & \phantom{1}  &  \phantom{1} &  \phantom{1} &  \phantom{1} &  \phantom{1}\\
\end{bmatrix}\longrightarrow
\end{eqnarray*}
\begin{eqnarray*}
\begin{bmatrix}
 & 1 &  1 &  & 1 &  1 &  &  & 1 \\
0 & 1 & 1 & 1 & 1 & 1& 1 & 0 & 1 \\
 0 & 1 & 1  & 0 & 1  & 1 & 0 & 0 & 1\\
0& 1 & 0 & 0  & 1 &  &  &  & \\
 \phantom{1}&   \phantom{1} & \phantom{1}   & \phantom{1}   & \phantom{1}  &  \phantom{1} &  \phantom{1} &  \phantom{1} &  \phantom{1}\\
\end{bmatrix}\longrightarrow
\begin{bmatrix}
 & 1 &  1 & 1 & 1 &  1 & 1 &  & 1 \\
0 & 1 & 1 & 1 & 1 & 1& 1 & 0 & 1 \\
 0 & 1 & 1  & 0 & 1  & 1 & 0 & 0 & 1\\
0& 1 & 0 & 0  & 1 &  &  &  & \\
 \phantom{1}&   \phantom{1} & \phantom{1}   & \phantom{1}   & \phantom{1}  &  \phantom{1} &  \phantom{1} &  \phantom{1} &  \phantom{1}\\
\end{bmatrix}\longrightarrow
\end{eqnarray*}
\begin{eqnarray*}
\begin{bmatrix}
 0 & 1 &  1 & 1 & 1 &  1 & 1 & 1 & 1 \\
0 & 1 & 1 & 1 & 1 & 1& 1 & 0 & 1 \\
 0 & 1 & 1  & 0 & 1  & 1 & 0 & 0 & 1\\
0& 1 & 0 & 0  & 1 &  &  &  & \\
 \phantom{1}&   \phantom{1} & \phantom{1}   & \phantom{1}   & \phantom{1}  &  \phantom{1} &  \phantom{1} &  \phantom{1} &  \phantom{1}\\ 
\end{bmatrix}\longrightarrow
\begin{bmatrix}
 0 & 1 &  1 & 1 & 1 &  1 & 1 & 1 & 1 \\
0 & 1 & 1 & 1 & 1 & 1& 1 & 0 & 1 \\
 0 & 1 & 1  & 0 & 1  & 1 & 0 & 0 & 1\\
0& 1 & 0 & 0  & 1 & * & * & * &* \\
*&*&*&*&*&*&*&*&*\\
\end{bmatrix}\phantom{\longrightarrow}
\end{eqnarray*}
}

\end{example}

It is clear that in this process a $0$ is never placed above a $1$.  Suppose now steps $1,\ldots,n-k+1$ have been performed and the following additional properties are true for each $k' \geq k$.

\begin{enumerate}

\item There are no empty positions in row $k'$ and ignoring each $1$ in row $k'$ which lies above another $1$, the entries row $k'$ read left to right gives the sequence $s_{k'}$.

\item For each $i \in [1,z_{k'}]$ the $i^{th}$ $0$ of row $k'$ of $M$ is in contact with $\lambda_{h_{k'}+i-1}$ entries equal to $1$.   

\item The leftmost entry in row $k'$ is $0$.

\end{enumerate}

We now show that if these properties hold for all $k' \geq k$ after step $n-k+1$ then they hold for all $k' \geq k-1$ after step $n-k+2$.

\begin{enumerate}

\item  If $k=n$ then the number of empty positions in row $k-1$ of $M$ after step $1$ is equal to $\ell-u_k$.  Moreover, we have $\lambda_{h_k}+z_{k-1}+u_{k-1}=\ell$ and that $u_k=\lambda_{h_k}$ so the number of terms in $s_{k-1}$ is equal to $z_{k-1}+u_{k-1}=\ell-u_k$ as well.  

If $k<n$ then the number of empty positions in row $k-1$ of $M$ after step $n-k+1$ is equal to $z_k$ by the fact that (1) holds for $k'=k$ after step $n-k+1$.  Moreover, by the fact row $h_k$ of $\lambda$ is a hook row  we have $z_k+h_k=\ell$ and  by the fact row $h_{k-1}$ of $\lambda$ is a hook row  we have $z_{k-1}+u_{k-1}+h_k=\ell$ and so the number of terms in $s_{k-1}$ is also equal to $z_k$.  

Therefore, after step $n-k+1$ the number of empty positions in row $k-1$ of $M$ is equal to the length of $s_{k-1}$ which implies that after step $n-k+2$ is completed, (1) holds for all $k' \geq k-1$.

\item Now suppose we have completed step $n-k+2$.  Consider the $i^{th}$ $0$ in row $k-1$.  This $0$ is in contact with three types of $1$s. First, it is in contact with $1$s that appear in row $k-1$ and do not have another $1$ below them.   This number is equal to the number of $1$s following the $i^{th}$ $0$ in $s_{k-1}$ which by considering the Young diagram of $\lambda$ can be seen to be  $\lambda_{h_{k-1}+(i-1)}-\lambda_{h_k}$.  The second type of $1$s are those that appear in row $k-1$ and have a $1$ beneath them.  The third type of $1$s are those that appear in row $k$.   The number of $1$s of the second and third type that our $0$ is in contact with is equal to the total number of $1$s in row $k$.  Since (3) holds for $k'=k$ after step $n-k+1$ there is a $0$ in the first position of row $k$ which is in contact with all $1$s in that row and since (2) holds for $k'=k$ after step $n-k+1$ there are $\lambda_{h_k}$ such $1$s.   Thus it  follows that the $i^{th}$ $0$ of row $k-1$ is in contact with $\lambda_{h_{k-1}+(i-1)}$ $1$s.  This proves that after step $n-k+2$ is completed, (2) is true for all $k'\geq k-1$.

\item Finally, since $s_{k-1}$ must begin with a $0$ and row $k-1$ and column $1$ must be empty before step $n-k+2$ by the fact (3) holds for $k'=k$ after step $n-k+1$, we see that this position contains a $0$ after step $n-k+2$.  Thus after step $n-k+2$ is completed, (3) is true for all $k'\geq k-1$.

\end{enumerate}

We define the result of this process to be $g(\lambda)$.  The discussion above makes it clear that $g(\lambda) \in \mathbf{M}_{m \times \ell}$. Moreover, number of $0$s appearing in $g(\lambda)$ is clearly equal to the number of rows of $\lambda$ so we have $h(g(\lambda))=h(\lambda)$.  Further, $w^{\ell-1}(\lambda)=\lambda_{h_1}+\cdots+\lambda_{h_n}$.  But the discussion above implies that for each $i \in [1,n]$ the $0$ in the leftmost column of row $i$ and  is in contact with $\lambda_{h_i}$ many $1$s, so this is exactly the number of $1$s in row $i$.  It follows that $w(g(\lambda))=w^{\ell-1}(\lambda)$.    Finally, since for each $i \in [1,h(\lambda)]$, the $i^{th}$ $0$ in $g(\lambda)$ is in contact with $\lambda_i$ many $1$s, the total number of $(0,1)$ pairs that are in contact, or $|g(\lambda)|$ is equal to the sum of the lengths of the rows of $\lambda$ or $|\lambda|$.  This last statement implies that in fact $g(\lambda) \in \mathbf{M}_{m \times \ell}^{<(\ell-1)m}$

Now suppose that $M \in \mathbf{M}_{m \times \ell }^{<(\ell-1)m}$.  Let $\prec$ denote row reading order (i.e., the order used to compute $\mathtt{rowread}(M)$) on the positions of $M$.   Define $\lambda=(\lambda_1,\ldots,\lambda_{h(M)})$ where the $i^{th}$  (under the order of $\prec$) $0$ of $M$ is in contact with precisely $\lambda_i$ $1$s. Suppose that there is some $i$ such that $\lambda_i<\lambda_{i+1}$.  Then there is some $j$ such that the $i+1^{st}$ $0$ of $M$ is in contact with a $1$ in column $j$ but $i^{th}$ $0$ of $M$ is not in contact with a $1$ in column $j$.  But this is only possible if a $0$ appears above a $1$ in column $j$ which is a contradiction.  Therefore $\lambda$ is a partition.  Since any $0$ can be in contact with at most $\ell-1$ $1$s we have $\lambda \in \mathbf{P}_{\ell-1}$ and so by construction in $\lambda \in \mathbf{P}_{\ell-1}^{<(\ell-1)m}$.

  Property (2) shown earlier implies that if $g(\mu)=M$ then $\mu=\lambda$ so $g$ is injective.   To show that $g$ is surjective it suffices to show that $g(\lambda)=M$.
 Suppose, rather, that $g(\lambda)=N \neq M$.  Let $(i,j)$ be maximal position in $M$ under $\prec$ such that $M_{i,j} \neq N_{i,j}$.  Suppose that $M_{i,j}=0$ and $N_{i,j}=1$.  Let $(i',j')$ be maximal such that $(i',j')\prec (i,j)$ and $N_{i',j'}=0$.  For any $k$ such that the $0$ in $M_{i,j}$ is in contact with some $1$ in column $k$ of $M$, the $0$ in $N_{i',j'}$ must be in contact with a $1$ from column $k$ of $N$.  In addition, the $0$ in $N_{i',j'}$ is also in contact with the $1$ in $N_{i,j}$.  Therefore (interchanging $M$ and $N$ in the previous argument if necessary), there is some $r$ such that the $r^{th}$ to last $0$ in $M$ is in contact with a different number of $1$s than the $r^{th}$ to last $0$ in $N$ is in contact with.  By definition the former number is $\lambda_{h(\lambda)-r+1}$ and by property (2) the latter number is also  $\lambda_{h(\lambda)-r+1}$.  Thus $M \neq N$ is impossible.

\end{proof}

The following result will be used later:

\begin{claim} \label{fitin}

If $x\in  \mathbf{T}_{\ell}^{<(\ell-1)m}$ and $x=[a_0,a_1,\ldots, a_{\ell}]$ then $a_j \leq m$ for all $j \in [0,\ell]$. 

\end{claim}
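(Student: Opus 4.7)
The strategy is to argue the contrapositive: suppose $x = [a_0, a_1, \ldots, a_\ell] \in \mathbf{T}_{\ell,m}$ has $a_k > m$ for some $k \in [2,\ell]$, and show that $degr(x) \geq (\ell-1)m$. Maximality gives $a_0 = a_1 = 0$ (since $x_0 = m$).

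The pivot case is when $a_j \geq m$ for every $j \in [2, \ell]$. Here Corollary \ref{corfitsin} applies directly with $a_1 = 0$, producing the $(\ell-1, m)$-Dyck path $x' = [0, a_2, \ldots, a_\ell]$ together with the identity $degr(x) = (\ell-1)m + degr(x')$. Since $degr(x') \geq 0$, this already yields $degr(x) \geq (\ell-1)m$, contradicting $degr(x) < (\ell-1)m$. The remaining work is to reduce the general case (where $a_k > m$ coexists with some $a_{j_0} < m$) to this pivot.

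I would carry out this reduction by induction on $\ell$, with base cases $\ell \leq 2$ handled immediately: the step condition $a_{j+1} \leq a_j + m$ together with $a_1 = 0$ forces $a_2 \leq m$, so the hypothesis $a_k > m$ is vacuous. For the inductive step, the plan is to locate a coordinate whose removal produces a smaller maximal Dyck path still violating the bound, with a controlled decrease in $degr$. Removing the last coordinate yields $x^- = [a_0, a_1, \ldots, a_{\ell-1}]$ (a valid $(\ell-1, m)$-Dyck path, as established in Section 2). Using $a_1 = 0$ and the clean identity $\alpha(0, a_\ell) + \alpha^0(a_\ell) = a_\ell$, the degree change simplifies to
\[
 degr(x) - degr(x^-) = a_\ell - 2\alpha^0(a_\ell) + \sum_{i=2}^{\ell-1} \alpha(a_i, a_\ell),
\]
which is non-negative whenever $a_\ell \leq m$, keeping $x^-$ within the range of the inductive hypothesis. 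When $a_\ell > m$, one instead uses a cycle map from Section 4 (which preserves $degr$) to shift the violation toward the endpoint, or peels off an interior coordinate along a block of large $a_j$'s where Corollary \ref{corfitsin} can be applied to the suffix.

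The main obstacle is handling interleaved configurations, where the violating coordinate $a_k > m$ is separated from the boundary by values $a_j \leq m$. In such cases a crude endpoint truncation can drop the degree by too much to inductively absorb the remaining $m$. The fallback is to invoke Claim \ref{bound1} (which gives $degr^-(x) \geq (\ell-2)m$ whenever $x$ is maximal and $a_\ell \geq m$) and combine it with a separate lower bound on $degr^+(x)$ derived from the large vertical steps the path must take to compensate for the spike at $a_k > m$ (since the deficit $x_1 + \cdots + x_{k-1} < (k-2)m$ forces $x_k + \cdots + x_\ell > (\ell - k + 2)m$, creating $\epsilon_{ij}^+$ contributions on the right half of the path). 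In the tightest configurations the bound $(\ell-1)m$ is achieved with equality, so the bookkeeping must be carried out carefully; this is the technical heart of the proof.
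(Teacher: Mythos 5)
Your pivot case and base case are fine, but the whole difficulty of Claim \ref{fitin} lies in the reduction to them, and as written that reduction does not go through. First, the justification of the inductive step is a non sequitur: non-negativity of $degr(x)-degr(x^-)$ does not place $x^-$ ``within the range of the inductive hypothesis,'' since the hypothesis at length $\ell-1$ concerns degrees below $(\ell-2)m$ while you only know $degr(x^-)\le degr(x)<(\ell-1)m$; what you would actually need (using the contrapositive) is a guaranteed gap $degr(x)-degr(x^-)\ge m$, which you never establish. Second, the case $a_\ell>m$ cannot be handled by the cycle maps: $\mathtt{right}$ sends $\mathbf{T}_{\ell,m}$ out of $\mathbf{T}_{\ell,m}$ (Claim \ref{rightcheck}), and a maximal path has $pair(x)=0$, so $\mathtt{left}$ produces a negative position coordinate (exactly as observed in the proof of Claim \ref{connected}); thus ``shifting the violation toward the endpoint'' while preserving both $degr$ and the crucial condition $a_1=0$ is not available, and no tool in the paper removes an interior coordinate with controlled degree change.

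The fallback for interleaved configurations also fails quantitatively. Take $m=2$, $\ell=5$ and $x=[0,0,1,3,4,2]$ (step coordinates $(2,1,0,1,4,4)$): it is maximal, has violations $a_3,a_4>m$ separated from the left boundary by $a_2=1<m$, satisfies the hypotheses of Claim \ref{bound1}, and $a_\ell\ge m$; yet the total of all excess-type terms $\delta^+_{ij}$ is $1<m$, because the large final step $x_\ell$ is excluded from every $\delta^+_{ij}$ (the sums stop at $j\le\ell-1$) and the remaining excess is almost entirely cancelled by the deficits to its left. So Claim \ref{bound1} plus your proposed excess bound tops out at $(\ell-2)m+1<(\ell-1)m$, even though $degr(x)=11$ exceeds $(\ell-1)m=8$; the surplus sits in deficit-type terms such as $\delta^-_{11}$, which $neg(x)$ discards and your bookkeeping never recovers. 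For comparison, the paper proves the claim with one direct count, no induction: representing $x$ by the array $A$ whose $j$-th column is $a_j$ ones over $m$ zeros, the hypotheses $a_1=0$ and $a_k>m$ force every cell in the first $m$ rows of columns $2,\dots,\ell$ into a $(0,1)$ contact pair, and every $1$ below row $m$ contributes one more pair, so $degr(x)\ge m(\ell-1)+u-u=(\ell-1)m$ immediately. You would be better served abandoning the induction and arguing globally in that style.
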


\begin{proof}

Suppose there is some $k$ such that $a_k>m$. Let $x$ be represented by an array $A$ composed of columns of varying height, all flushed to the top of the array.  $A$ has $\ell$ columns and column $j$ read from top to bottom is equal to $a_j$ $1$s followed by $m$ $0$s.  

\begin{example}
If $a =[0,0,2,7,3,5,4,6,6,3] \in \mathbf{T}_{9,5}$ then  
\begin{eqnarray*} 
A=\begin{bmatrix}
0 & 1 & 1 & 1  & 1 & 1 & 1 & 1 & 1\\
0 & 1 & 1 & 1 & 1 & 1 & 1 & 1 & 1\\
0 & 0 & 1 & 1  & 1 & 1 & 1 & 1 & 1\\
0 & 0 & 1 & 0 & 1 & 1 & 1 & 1 & 0\\
0 & 0 & 1 & 0  &  1 & 0 & 1 & 1 & 0 \\
  & 0 & 1 & 0  &  0 & 0 & 1 & 1 & 0 \\
  & 0 & 1 &  0  &  0 & 0 & 0 & 0 & 0\\
 &    & 0 &  0 &   0 & 0 &  0& 0 & 0 \\
  &    & 0 &    &   0 & 0 &  0& 0 & \\
 &     & 0 &    &   0 &   &  0& 0 & \\
  &    & 0 &    &    &   &  0& 0 & \\
 &    & 0 &    &    &   &  &  & \\
\end{bmatrix}
\end{eqnarray*}
and we could take $k$ to be $3$, $7$, or $8$.
\end{example}

Recall that using definition  \ref{alpha} we have 
\begin{eqnarray*}
degr(x)=\sum_{1 \leq i <  j \leq \ell} \alpha(a_i,a_j)- \sum_{2 \leq j \leq \ell} \alpha_0(a_j) 
\end{eqnarray*}
Note that if $a_i >a_j$ then $\alpha(a_i,a_j)$ returns the number of $0$s in column $j$ of $A$ in contact with a $1$ in column $i$ of $A$ (and there are no $0$s in column $i$ in contact with a $1$ in column $j$.)  On the other hand, if $a_i \leq a_j$ then $\alpha(a_i,a_j)$ returns the number of $0$s in column $i$ of $A$ in contact with a $1$ in column $j$ of $A$ (and there are no $0$s in column $j$ in contact with a $1$ in column $i$.).  Therefore, in either case $\alpha(a_i,a_j)$ counts the number of $(0,1)$ pairs in contact with one term appearing in column $i$ and the other in column $j$.  
Meanwhile the value of $\alpha^0(a_j)$ is the number of $1$s that appear in column $j$ below row $m$.  Therefore $degr(x)$ is equal to the total number of $(0,1)$ pairs in contact in $A$ minus the number $1$s that appear below row $m$ of $A$.  

Since $a_1=0$ the first column of $A$  contains $m$ $0$s and so for each $1$ in the first $m$ rows of $A$ there is a $0$ in the first column of $A$ that is in contact with it.  Moreover, every $0$ in the first $m$ rows of $A$ and not in column $1$ is in contact with a $1$ in column $k$.  So far we have counted $m (\ell-1)$ $(0,1)$ pairs in contact. Now for any $1$ that lies in some row $i>m$ and column $j$ (for instance row $6$ and column $7$ in the example), let $j'<j$ be maximal such that $A_{i,j'}$ does not contain a $1$ (continuing the example we would have $j'=5$).  Column $1$ contains no $1$ so such maximal $j'$ exists.  Since $a_{j'}-a_{j'+1} \geq -m$ and $A_{i,j'+1}$ contains a $1$, it follows that $A_{i,j'}$ contains a $0$ (that is in contact with the $1$ in $A_{i,j}$).  Since no $(0,1)$ pair with the $0$ below row $m$ has been counted yet, we have now counted $m (\ell-1)+u$  pairs in contact where $u$ is the number of $1$s below row $m$.  It follows that $degr(a) \geq   (m (\ell-1)+u)-u$ which contradicts the assumption thereby proving the claim.

\end{proof}

\begin{claim}\label{f}

There is a bijection $f: \mathbf{M}_{m \times \ell}^{<(\ell-1)m} \rightarrow \mathbf{T}_{\ell,m}^{<(\ell-1) m}$ such that if $f(M)=x$ then:

\begin{itemize}

\item $w(M)=area(x)$.

\item $|M|=degr(x)$

\end{itemize}

\end{claim}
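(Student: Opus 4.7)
The plan is to define $f$ explicitly and verify the two statistic identities and bijectivity. Given $M \in \mathbf{M}_{m \times \ell}^{<(\ell-1)m}$, I set $a_j$ to be the number of $1$s appearing in the $j$-th column of $M$ for $j = 1, \ldots, \ell$ and define $f(M) = x$ by $x = [0, a_1, a_2, \ldots, a_\ell]$ in position coordinates.

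First I verify $x \in \mathbf{T}_{\ell,m}$. Conditions (3) and (5) of Definition \ref{matdef} force $M_{1,1} = 0$ and hence no $1$ in the first column, giving $a_1 = 0$ and thus $x_0 = m$; since $M$ has only $m$ rows, each $a_j \le m$, which guarantees $x_j = m + a_j - a_{j+1} \ge 0$ and that $x$ is a valid $(\ell, m)$-Dyck path. Also $area(x) = \sum_{j=1}^{\ell} a_j$ counts the total number of $1$s in $M$, which is exactly $w(M)$, so the area identity is immediate.

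The main technical step is showing $degr(x) = |M|$. The conditions of Definition \ref{matdef} force the non-star part of $M$ to be a Young-diagram shape whose column heights $\mu = (\mu_1, \ldots, \mu_\ell)$ are weakly decreasing and take only two consecutive values $r$ and $r+1$ (the $(r+1)$-tall columns on the left), with the $1$s in each column occupying the top $a_j$ entries and $0$s the next $\mu_j - a_j$ entries. Claim \ref{fitin} gives $a_j \le m$, so $\alpha^0(a_j) = 0$ and $degr(x) = \sum_{1 \le i < j \le \ell} \alpha(a_i, a_j)$. For each pair $i < j$ I would show that $\alpha(a_i, a_j)$ equals the number of $(0, 1)$ contacts between columns $i$ and $j$: if $a_i \le a_j$ all such contacts are same-row, with the $0$ in column $i$ and $1$ in column $j$, giving $\min(\mu_i, a_j) - a_i = a_j - a_i$ (the simplification uses $\mu_i \ge \mu_j \ge a_j$); if $a_i > a_j$ all such contacts are below-type, with the $1$ in column $i$ and the $0$ in column $j$ one row above, giving $\min(\mu_j, a_i - 1) - a_j = a_i - a_j - 1$ (the simplification uses $\mu_j \ge r \ge a_i - 1$, since $a_i \le \mu_i \le r+1$). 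In both cases no contacts of the other type exist. Summing over all column pairs yields $|M|$.

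Finally, to show $f$ is a bijection I construct the inverse explicitly. Given $x = [0, a_1, \ldots, a_\ell] \in \mathbf{T}_{\ell,m}^{<(\ell-1)m}$, let $M^* = \max_j a_j$; if $a_\ell = M^*$ take $r = M^*$ and $t = 0$, otherwise take $r = M^* - 1$ and $t = \max\{j : a_j = M^*\}$. Set $\mu_j = r + 1$ for $j \le t$ and $\mu_j = r$ for $j > t$, and fill the $j$-th column of $M$ with $1$s in rows $1, \ldots, a_j$, $0$s in rows $a_j + 1, \ldots, \mu_j$, and $*$'s elsewhere. A short check shows the resulting $M$ lies in $\mathbf{M}_{m \times \ell}$, satisfies $f(M) = x$, and that conditions (4) and $\mu_j \ge a_j$ together force $(r, t)$ to be unique; the bound $|M| < (\ell-1)m$ then transfers via the already-established identity $|M| = degr(x)$. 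I expect the main obstacle to be the contact-counting in the $degr$ identity: the case split is elementary, but one must carefully track which column holds the $0$ versus the $1$ in each subcase and invoke the height bound $\mu_j \ge a_i - 1$ at the correct moment.
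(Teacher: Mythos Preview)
Your proposal is correct and follows essentially the same route as the paper: the definition of $f$ via column 1-counts, the area identity, and the inverse construction all coincide with the paper's (your $(r,t)$ parameterization of the non-$*$ shape is exactly the ``fill up to the last $1$ in row-reading order'' description).

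The one genuine difference is in the $degr$ computation. The paper argues indirectly by invoking the array $A$ from the proof of Claim~\ref{fitin}, where it was already established that $degr(x)$ equals the number of $(0,1)$ contacts in $A$ minus the number of $1$s below row $m$; since here $a_j \le m$ forces $A$ to coincide with $M$ above row $m$ and to have no $1$s below, the identity $degr(x)=|M|$ follows at once. You instead count contacts directly, pairing columns $i<j$ and splitting on $a_i \le a_j$ versus $a_i > a_j$; the key inequality $\mu_j \ge a_i - 1$ (from $\mu_i - \mu_j \le 1$ and $a_i \le \mu_i$) is exactly what makes the second case work. Your argument is slightly more self-contained, while the paper's is shorter because it recycles work already done.

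One small remark: you cite Claim~\ref{fitin} for $a_j \le m$ in the forward direction, but there it is automatic from $M$ having $m$ rows. The place Claim~\ref{fitin} is genuinely needed is in the inverse direction, to ensure $M^* = \max_j a_j \le m$ so that your constructed matrix fits inside an $m \times \ell$ grid; you should make that dependence explicit rather than folding it into ``a short check.''
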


\begin{proof}

Given $M \in \mathbf{M}_{m \times \ell}^{<(\ell-1)m}$ define $f(M)=[a_0,a_1,\ldots,a_{\ell}]$ where $a_0=0$ and for $j>0$, $a_j$ is equal to the number of $1$s in column $j$ of $M$.  Since the $M$ has only $m$ rows the maximum value of $a_j$ is $m$ so $a_j-a_{j+1} \geq -m$ for $0 \leq j <\ell$ and we see that  $f(M) \in \mathbf{D}_{\ell,m}$.  Since column $1$ of $M$ has no $1$s we also have that $a_1=0$ so that in fact $f(M) \in \mathbf{T}_{\ell,m}$.  Clearly $w(M)=area(f(M))$.  Moreover since all $a_j \leq m$ we have $\alpha_0(a_j)=0$ for all $j \in [2,\ell]$ and it follows (see the proof of claim \ref{fitin}) that $degr(f(M))$ is the number of pairs of $0$s and $1$ in contact in the array $A$ associated to $x$ (as in the proof of \ref{fitin}).  Since $A$ is idenctical to $M$ except that the former may contain some $0$s (but no $1$s) below row $m$, this number is the  same as the number of $(0,1)$ pairs in contact in $M$ or $|M|$. This implies that $degr(f(M))=|M|$ and therefore also that $f(M) \in \mathbf{T}_{\ell,m}^{<(\ell-1)m}$

Given $x=[a_0,a_1,\ldots,a_{\ell}] \in \mathbf{T}_{\ell,m}^{<(\ell-1)m}$ start with an empty $m \times \ell$ matrix and for each $j \in [1,\ell]$ place a $1$ in the top $a_j$ positions in column $j$. Note that this is possible since by claim \ref{fitin} $a_j \leq m$ for all $j$. Now suppose that $(i,j)$ is maximal (in the row reading order, $\prec$) such that position $(i,j)$ of the result contains a $1$.  For all pairs $(i',j') \prec (i,j)$ that do not contain a $1$, place a $0$ in position $(i',j')$.  Finally, place a $*$ in all remaining empty positions.  If $M$ is the resulting matrix we define $f^{-1}(x)=M$.  Since $a_1=0$ the top left entry of $M$ is $0$.  The rest of the properties of $\mathbf{M}_{m \times \ell}$ specified in definition \ref{matdef} follow easily from the construction.  Clearly we have $f(f^{-1}(x))=x$ so we see from the above that $|f^{-1}(x)|=degr(x)$ so that $f^{-1}(x) \in \mathbf{M}_{m \times \ell}^{<(\ell-1)m}$

Finally since also $f^{-1}(f(M))=M$ the claim is proved.
\end{proof}

\begin{claim}\label{i}

There is an involution $ \iota$ on  $\mathbf{M}
_{m\times \ell}$ such that if $\iota(M)=N$ then

\begin{itemize}

\item $w(M)=h(N)$

\item $h(M)=w(N)$

\item $|M|=|N|$

\end{itemize}

\end{claim}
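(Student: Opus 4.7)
The plan is to define $\iota(M)$ by reversing the non-star prefix of $\mathtt{rowRead}(M)$ and interchanging $0$s and $1$s. Writing $k = h(M) + w(M)$ and $w = (w_1, \ldots, w_k) \in \{0,1\}^k$ for the non-star portion of $\mathtt{rowRead}(M)$, I set $\iota(M) = N$ to be the matrix whose row-reading is $(w'_1, \ldots, w'_k, *, \ldots, *)$, where $w'_i = 1 - w_{k+1-i}$. Note that $N$ has the same set of star positions (equivalently, the same shape of non-star region) as $M$.

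The first step I would carry out is to recast $|M|$ in a form well-adapted to this operation, namely
\[
|M| = \#\bigl\{(p,q) : 1 \le p,q \le k,\ w_p = 0,\ w_q = 1,\ 0 < q - p < \ell\bigr\}.
\]
This is an elementary case check: the matrix cells $(i_0, j_0)$ and $(i_0+1, j_0)$ are exactly $\ell$ apart in rowRead, so the two clauses in the definition of ``contact'' (``$1$ later in the same row'' and ``$1$ in the next row strictly to the left'') together correspond to exactly those $(p, q)$ with $0 < q - p < \ell$.

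Next I would verify that $\iota(M) = N$ lies in $\mathbf{M}_{m \times \ell}$. The dimension and the star-suffix condition ((1) and (2) of Definition \ref{matdef}) are immediate. Properties (3) and (4) follow from the identities $w'_1 = 1 - w_k$ and $w'_k = 1 - w_1$ combined with the corresponding properties of $M$. The only delicate check is (5), the ``no $0$ directly above a $1$'' condition, and this is the step I expect to be the main obstacle. However, it reduces to a single substitution: a hypothetical violation $N_{i,j} = 0$, $N_{i+1,j} = 1$ at rowRead positions $p$ and $p + \ell$ translates, via $q := k + 1 - p - \ell$, to $w_q = 0$ and $w_{q+\ell} = 1$, producing a forbidden ``$0$ directly above a $1$'' in $M$ --- a contradiction. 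The bounds $1 \le q$ and $q + \ell \le k$ needed to interpret $q, q+\ell$ as valid rowRead positions in $M$ follow from $N_{i,j}, N_{i+1,j}$ being non-star, i.e., $p \ge 1$ and $p + \ell \le k$.

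Finally, I would verify the three statistical claims. The identities $h(N) = w(M)$ and $w(N) = h(M)$ are immediate from the $0 \leftrightarrow 1$ swap. For $|M| = |N|$, apply the reformulation of the first step to both matrices: the substitution $(p, q) \mapsto (k+1-q,\ k+1-p)$ is a bijection between the contact pairs of $M$ and those of $N$, since it exchanges which position carries the $0$ and which carries the $1$ while preserving $q - p$. That $\iota \circ \iota = \mathrm{id}$ is then clear, since reversal and the swap $0 \leftrightarrow 1$ are commuting involutions on $\{0,1\}^k$.
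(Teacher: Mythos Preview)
Your proof is correct, and the involution you define is in fact the same map as the paper's: the paper swaps $0 \leftrightarrow 1$ and then rotates two rectangular submatrices through $180^\circ$, which one checks is exactly the permutation of non-star cells induced by reversing the $\mathtt{rowRead}$ prefix. So the construction coincides; what differs is the packaging.

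Your route is somewhat more economical. The key step is your reformulation $|M| = \#\{(p,q): w_p=0,\ w_q=1,\ 0<q-p<\ell\}$ in terms of $\mathtt{rowRead}$ indices, after which both condition~(5) and the identity $|M|=|N|$ reduce to one-line substitutions. The paper instead argues geometrically: it tracks, for each $(0,1)$ pair in contact in $M$, which of the two submatrices the $0$ and the $1$ lie in, and verifies case by case that the rotated pair is again in contact in $N$. Your approach avoids this case split entirely; the paper's approach, on the other hand, makes the geometric picture (and hence the connection to the matrix arrays used elsewhere in Section~5) more visible.
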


\begin{proof}

  Given $M \in \mathbf{M}_{m \times \ell}$ start by changing all entries of $M$ that were $1$ to $0$ and all entries that were $0$ to $1$. Now, let $(\hat{i},\hat{j})$ be maximal in the row reading order $\prec$  such that $M_{\hat{i},\hat{j}} \neq *$. Consider the submatrix with top left corner in position $(1,1)$ and bottom right corner $(\hat{i},\hat{j})$.  Take this submatrix and rotate it through $180$ degrees.  Then consider the submatrix with top left corner in position $(1,\hat{j}+1)$ and bottom right corner in position $(\hat{i}-1,\ell)$.  Rotate this submatrix through $180$ degrees as well.

\begin{example} In the top left we start with the matrix $M$.  Then we interchange all $0$s and $1$s to get the matrix on the top right.  Then we rotate the first submatrix mentioned in the proof to get the matrix on the bottom left.  Finally, we rotate the second submatrix mentioned in the proof to get the matrix on the bottom right.\\
\begin{eqnarray*}
\begin{bmatrix}
 0 & 1 &  1 & 1 & 1 &  1 & 1 & 1 & 1 \\
0 & 1 & 1 & 1 & 1 & 0& 1 & 0 & 1 \\
 0 & 1 & 1  & 0 & 1  & 0 & 1 & 0 & 0\\
0& 0 & 1 & 0  & 1 & * & * & * &* \\
*&*&*&*&*&*&*&*&*\\ 
\end{bmatrix} \rightarrow \begin{bmatrix}
 1 & 0 &  0 & 0 & 0 &  0 & 0 & 0 & 0 \\
1 & 0 & 0 & 0 & 0 & 1& 0 & 1 & 0 \\
 1 & 0 & 0  & 1 & 0  & 1 & 0 & 1 & 1\\
1& 1 & 0 & 1  & 0 & * & * & * &* \\
*&*&*&*&*&*&*&*&*\\
\end{bmatrix} \rightarrow
\end{eqnarray*}
\begin{eqnarray*}
 \begin{bmatrix}
 0 & 1 & 0 & 1 & 1 &  0 & 0 & 0 & 0 \\
0 & 1 & 0 & 0 & 1 & 1& 0 & 1 & 0 \\
 0 & 0 & 0 & 0 & 1  & 1 & 0 & 1 & 1\\
0 & 0 & 0 &  0  & 1 & * & * & * &* \\
*&*&*&*&*&*&*&*&*\\
\end{bmatrix}\rightarrow \begin{bmatrix}
 0 & 1 & 0 & 1 & 1 &  1 & 1 & 0 & 1 \\
0 & 1 & 0 & 0 & 1 & 0& 1 & 0 & 1 \\
 0 & 0 & 0 & 0 & 1  & 0 & 0 & 0 & 0\\
0 & 0 & 0 &  0  & 1 & * & * & * &* \\
*&*&*&*&*&*&*&*&*\\
\end{bmatrix} \phantom{\rightarrow}
\end{eqnarray*}

\end{example}
\noindent First we show that $\iota(M) \in \mathbf{M}_{m \times \ell}$ by checking the conditions of definition \ref{matdef}:

\begin{enumerate}

\item Clearly the entries of $\iota(M)$ belong to the set $\{0,1,*\}$.

\item Any $*$ appearing in $\mathtt{rowRead}(\iota(M))$ lies to the right of any $0$ or $1$ appearing in $\mathtt{rowRead}(\iota(M))$ since $\iota$ does not affect which entries are equal to $*$.

\item The first entry of $\mathtt{rowRead}(\iota(M))$ is $0$. This follows from the fact that the rightmost entry of $\mathtt{rowRead}(M)$ which is not a $*$ is a $1$.  

\item The rightmost entry of $\mathtt{rowRead}(\iota(M))$ which is not a $*$ is a $1$.  This follows from the fact that   the first entry of $\mathtt{rowRead}(M)$ is $0$.

\item There is no appearance of a $0$ which lies directly above a $1$ in $\iota(M)$. This follows from the fact that it is true for $M$.
\end{enumerate}

Next it is obvious from construction that we have $w(M)=h(\iota(M))$ and that $h(M)=w(\iota(M))$.   

Next, the map $\iota$ induces an involution on the set of positions $\{(i,j): M_{i,j} \neq *\}$ in the obvious way (rotating the positions in each of the submatrices mentioned in the construction through 180 degrees) which we denote by $\hat{\iota}$.  We claim that for any position $p$ and $q$ such that $M$ has a $0$ in position $p$ and a $1$ in position $q$ that are in contact we have that $N=\iota(M)$ has a $0$ in position $\hat{\iota}(q)$ and a $1$ in position $\hat{\iota}(p)$ that are in contact. If $p$ and $q$ are in the same submatrix this is easy to see.  If $p$ is in the left submatrix and $q$ in the right then we must have that $p$ and $q$ lie in the same row. It follows that $\hat{\iota}(q)$ lies in the row above $\hat{\iota}(p)$ (and to its right).  Thus, in $N$, the $0$ in the former in is in contact with the $1$ in the latter.  If $p$ lies in the right submatrix and $q$ lies in the left submatrix then it must be that $p$ lies one row above $q$.  It follows that $\hat{\iota}(q)$ lies in the same row as $\hat{\iota}(p)$ (and to its left) so, in $N$, the $0$ in the former is in contact with the $1$ in the latter.  Therefore we have $|M| \leq |\iota(M)|$ and since it is clear that $\iota^2(M)=M$ we have that $|M|=|\iota(M)|$.

\end{proof}

\section{The Projection Theorem and its Proof}

\begin{definition}
Given $x \in \mathbf{D}_{\ell,m}$, define $\mathtt{lowest}(x)=\mathtt{right}^{i}(x)$ where $i$ is maximal such that $\mathtt{right}^i(x)$ is defined (and where $\mathtt{right}^i(x)=\mathtt{right} \circ \cdots \circ \mathtt{right}(x)$ with $i$ factors).  
\end{definition}

\begin{theorem}\label{project}
Let $\lambda=[p_0,\ldots,p_{\ell-2}] \in \mathbf{P}_{\ell-1}$ be the partition with $p_i$ parts of size $\ell-1-i$ and let $\mu=[p_1,\ldots,p_{\ell-2}] \in \mathbf{P}_{\ell-2}$ be the partition with $p_i$ parts of size $\ell-1-i$. Further suppose that $|\lambda| < (\ell-2)m$.   Then if $\mathtt{lowest}(f(g(\lambda)))=[0,a_1,\ldots,a_{\ell}]$ we have $\mathtt{lowest}(f(g(\mu)))=[a_1-(m-p_0),\ldots,a_{\ell}-(m-p_0)]$.  In particular, $a_1=m-p_0$.
\end{theorem}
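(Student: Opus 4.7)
The plan is to produce an explicit candidate $y^{*}$ for $\mathtt{lowest}(f(g(\lambda)))$ built from $y := \mathtt{lowest}(f(g(\mu)))$, and then to verify that $y^{*} = \mathtt{lowest}(f(g(\lambda)))$. Writing $y = [0, b_1, \ldots, b_{\ell-1}]$, I define
\[
y^{*} := [0,\, m-p_0,\, b_1 + (m-p_0),\, \ldots,\, b_{\ell-1} + (m-p_0)].
\]
Since $p_0$ is at most the number of hook rows of $\lambda$, which is at most $m$ under $|\lambda|<(\ell-1)m$ (see the proof of Claim \ref{g}), one has $m - p_0 \in [0, m]$, and Corollary \ref{corfitsout} immediately gives $y^{*} \in \mathbf{D}_{\ell, m}$. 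Once $y^{*} = \mathtt{lowest}(f(g(\lambda)))$ is established, both conclusions of the theorem follow.

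The structural backbone of the argument is a block decomposition of $g(\lambda)$: because $\lambda$ has $p_0$ initial rows of length $\ell-1$, a direct analysis of the hook-row filling in the proof of Claim \ref{g} shows that the top $p_0$ rows of $g(\lambda)$ are each of the form $[0,1,1,\ldots,1]$, while the bottom $m-p_0$ rows, restricted to columns $1,\ldots,\ell-1$, agree entry-for-entry with $g(\mu)$ (and column $\ell$ is $*$ below row $p_0$). Taking column sums,
\[
f(g(\lambda)) = [0,\, 0,\, p_0 + c_2,\, \ldots,\, p_0 + c_{\ell-1},\, p_0] \quad\text{where}\quad f(g(\mu)) = [0, 0, c_2, \ldots, c_{\ell-1}],
\]
so the tail of $f(g(\lambda))$ is a controlled shift of $f(g(\mu))$ with an extra $p_0$ appended.

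With this identification, the statement $y^{*} = \mathtt{lowest}(f(g(\lambda)))$ splits into three subclaims: $degr(y^{*}) = |\lambda|$; $y^{*}$ is not rightable; $y^{*}$ lies in the $\mathtt{right}$-orbit of $f(g(\lambda))$. For the degree, an induction on $\ell$ applying the theorem itself to $\mu$ yields the closed form $b_j = j m - (p_1 + \cdots + p_j)$, so $b_j \geq p_0$ follows from $\sum_{k=0}^{\ell-2} p_k \leq m$; this places $y^{*}$ inside the hypothesis of Corollary \ref{corfitsin}, which gives $degr(y^{*}) = p_0(\ell-1) + degr(y) = p_0(\ell-1) + |\mu| = |\lambda|$. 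For non-rightability the aim is $point(y^{*}) = \ell$, i.e.\ $a_r \leq a_\ell - m$ for every $r < \ell$: the cases $r = 0, 1$ reduce to $b_{\ell-1} \geq m$, and for $r \in [2, \ell-1]$ the required inequality becomes (after subtracting $m - p_0$) $b_{r-1} \leq b_{\ell-1} - m$, which is the inductive statement $point(y) = \ell - 1$ applied to $\mu$.

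The hard part is the orbit-reachability claim. The approach is induction on $|\lambda|$, exploiting the block decomposition above to pair each $\mathtt{cycleright}$ operation performed on $f(g(\lambda))$ either with a corresponding $\mathtt{cycleright}$ on $f(g(\mu))$ (acting on the shifted tail positions) or with a unit increment of the head coordinate $a_1$ from $0$ toward $m - p_0$. The delicate bookkeeping is tracking the index $r$ at which $\mathtt{right}$ acts as the head grows and the tail simultaneously evolves toward $y$'s lowest form; once the operations are shown to decouple in this way, the combined iteration terminates precisely at $y^{*}$, completing the proof.
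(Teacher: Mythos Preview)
Your candidate $y^{*}$ is indeed the right object, and the three-part strategy (degree, non-rightability, orbit-reachability) is sensible in outline. But the arguments you give for the first two parts rest on claims that are false, and the third part is essentially left open.

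\textbf{The closed form is wrong.} You derive $b_j = jm - (p_1+\cdots+p_j)$ by applying the theorem inductively to $\mu$. But the theorem's hypothesis for $\mu\in\mathbf{P}_{\ell-2}$ is $|\mu|<(\ell-3)m$, which does not follow from $|\lambda|<(\ell-2)m$ (take $p_0=0$). Concretely, let $\ell=4$, $m=3$, $\lambda=[0,2,1]$ so $|\lambda|=5<6$. Then $\mu=[2,1]\in\mathbf{P}_2$ has $|\mu|=5\not<3$, and direct computation gives $\mathtt{lowest}(f(g(\mu)))=[0,1,3,4]$; your formula predicts $b_3=9-3=6\neq 4$. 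So the bound $b_j\geq p_0$ that you feed into Corollary~\ref{corfitsin} is unjustified (it happens to be true, but for reasons your argument does not supply).

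\textbf{Non-rightability does not reduce to $point=\ell$.} You aim to show $point(y^{*})=\ell$, but in the same example $y^{*}=[0,3,4,6,7]$ has $point(y^{*})=3\neq 4$; it is non-rightable because $a_1-a_3=-3\not<-m$. Likewise your appeal to ``$point(y)=\ell-1$'' fails: $y=[0,1,3,4]$ has $point(y)=2$. Non-rightability of $\mathtt{lowest}$ can come from a pair obstruction rather than the point reaching the end, so the reduction you propose does not go through.

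\textbf{What the paper actually does.} The paper does not argue backward from $y^{*}$; instead it lifts $\mathtt{right}$ to an operation $\mathtt{Right}$ on augmented $0$--$1$ matrices $N_kN_k'$ with $\mathcal{F}(N_kN_k')=\mathtt{right}^k(f(g(\lambda)))$ and $\mathcal{G}(N_kN_k')=\lambda$ preserved throughout. The degree bound $|\lambda|<(\ell-2)m$ is used in an essential counting argument to force $Point(N_kN_k')\in\{0,1\}$ at every step; this is precisely what makes the orbit tractable and is the substitute for the induction you attempt. At the terminal step $r$ the matrix splits, revealing $N_r'=g(\mu)$, and from there the two orbits $\mathtt{right}^{r+q}(f(g(\lambda)))$ and $\mathtt{right}^{p_0(\ell-1)+q}(f(g(\mu)))$ are run in parallel and shown to become non-rightable simultaneously. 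Your ``decoupling'' sketch for orbit-reachability is pointing at the right phenomenon, but carrying it out requires exactly this $Point\in\{0,1\}$ control, which is where the real work lies.
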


\begin{proof}

 Let $N_0=g(\lambda) \in \mathbf{M}_{m \times \ell}^{<(\ell-2)m}$.  Let $N_0'$ denote the $m \times (\ell-1)$ matrix with every entry equal to $*$.  Form the array $N_0N_0'$ by appending the matrix $N_0'$ to the bottom of $N_0$ and flushed to the right (so that the shape of $N_0N_0'$ is that of a $2m \times \ell$ matrix except with positions $(m+1,1),\ldots,(2m,1)$ missing.  If $N_kN_k'$ is defined and columns $2$ through $\ell$ of $N_k$ contain at least one entry that is equal to $0$ or $*$, define $N_{k+1}N_{k+1}'=\mathtt{Right}(N_kN_k')$ where $\mathtt{Right}(NN')$ is defined as follows:

\begin{itemize}

\item If the rightmost column of $NN'$ has less than $m$ $1$s define $Point(NN')=0$. Otherwise let $j$ be minimal such that the difference between the number of $1$s  in the rightmost column of $NN'$ and the number of $1$s in the $j^{th}$ column of $NN'$ is less than $m$   and  define $Point(NN')=j$.
\item Remove the rightmost column of $NN'$, denote $C_{\ell}$, and slide all entries in a column $j \in [Point(NN')+1,\ell-1]$ one position to the right.   Then append a $1$ to the top of $C_{\ell}$ and remove its bottom entry and use the result to fill the evacuated positions (top to bottom).

\end{itemize}

\begin{example}\label{mats}
In the examples below we have $m=3$ and $\ell=5$.  The first arrow represents removing column $C_{\ell}$ and shifting the columns to the right of $Point(NN')$ to the right.  The second arrow represents shifting the entries of $C_{\ell}$ down and the third represents placing the resulting entries back into the matrix.\\
In this example we have $Point(NN')=1$.
\begin{eqnarray*}
NN'=\begin{bmatrix}
1&1&1&1& \textcolor{red}1 \\
1&1&1&1& \textcolor{red}1 \\
0&1&1&0& \textcolor{red}1 \\
&1&0&0& \textcolor{red}0 \\
&1&*&*& \textcolor{red}* \\
&*&*&*& \textcolor{red}* \\
\end{bmatrix} \rightarrow
\begin{bmatrix}
1& &1&1&1 \\
1& &1&1&1 \\
0& &1&1&0 \\
& &1&0&0 \\
& &1&*&* \\
& &*&*&* \\
\end{bmatrix},
\begin{bmatrix}
  \textcolor{red}1 \\
 \textcolor{red}1 \\
 \textcolor{red}1\\
 \textcolor{red}0\\
 \textcolor{red} *\\
 \textcolor{red}* \\
\end{bmatrix} \rightarrow
\begin{bmatrix}
 \textcolor{red} 1 \\
 \textcolor{red}1 \\
 \textcolor{red}1\\
 \textcolor{red}1\\
 \textcolor{red} 0\\
 \textcolor{red}* \\
\end{bmatrix} \rightarrow
\begin{bmatrix}
1&  \textcolor{red}1&1&1&1 \\
1&  \textcolor{red}1&1&1&1 \\
0&  \textcolor{red}1&1&1&0 \\
&  \textcolor{red}1&1&0&0 \\
&  \textcolor{red}0&1&*&* \\
&  \textcolor{red}*&*&*&* \\
\end{bmatrix}
\end{eqnarray*}\\
In this example we have $Point(NN')=0$.
\begin{eqnarray*}
NN'=\begin{bmatrix}
1&1&1&1& \textcolor{red}1 \\
1&1&1&1& \textcolor{red}1 \\
0&1&1&0& \textcolor{red}0 \\
&1&0&0& \textcolor{red}0 \\
&1&*&*& \textcolor{red}* \\
&*&*&*& \textcolor{red}* \\
\end{bmatrix} \rightarrow
\begin{bmatrix}
\phantom{0}& 1&1&1&1 \\
& 1&1&1&1 \\
& 0&1&1&0 \\
& &1&0&0 \\
& &1&*&* \\
& &*&*&* \\
\end{bmatrix},
\begin{bmatrix}
  \textcolor{red}1 \\
 \textcolor{red}1 \\
 \textcolor{red}0\\
 \textcolor{red}0\\
 \textcolor{red} *\\
 \textcolor{red}* \\
\end{bmatrix} \rightarrow
\begin{bmatrix}
 \textcolor{red}1 \\
 \textcolor{red}1 \\
 \textcolor{red}1\\
 \textcolor{red}0\\
 \textcolor{red} 0\\
 \textcolor{red}*\\
\end{bmatrix} \rightarrow
\begin{bmatrix}
\textcolor{red}{1}& 1&1&1&1 \\
\textcolor{red}{1}& 1&1&1&1 \\
\textcolor{red}{1}& 0&1&1&0 \\
& \textcolor{red}{0}&1&0&0 \\
& \textcolor{red}{0}&1&*&* \\
& \textcolor{red}{*}&*&*&* \\
\end{bmatrix}
\end{eqnarray*}
\end{example}

\begin{example}\label{N0}
Let $m=3$ and $\ell=5$ and suppose that $\lambda=(4,2,1)=[1,0,1,1]$. Applying $\mathtt{Right}$ starting from $N_0N_0'$ gives the sequence:
\begin{eqnarray*}
N_0N_0'=\begin{bmatrix}
0 & 1 &1 & 1 & 1\\
0 & 1 & 0 & 1 &* \\
* & *  & * & * & *\\
 & *  & * & * & *\\
 & *  & * & * & *\\
 & *  & * & * & *\\
\end{bmatrix} ,\,\,\,\,\,\,\,
N_1N_1'=\begin{bmatrix}
1 & 0 &1 & 1 & 1\\
1 & 0 &1 & 0 & 1 \\
* & * & * & * & *\\
 & *  & * & * & *\\
 & *  & * & * & *\\
 & *  & * & * & *\\
\end{bmatrix}, \ldots \\
\ldots, \,\,\, N_9N_9'=\begin{bmatrix}
1 & 1 &1 & 1 & 1\\
1 & 1 &1 & 1 & 0 \\
1 & 1 &1 & 1 & 0\\
 &  1 & 0 & 1 & *\\
 & *  & * & * & *\\
 & *  & * & * & *\\
\end{bmatrix},\,\,\,\,\,\,\,
N_{10}N_{10}'=\begin{bmatrix}
1 & 1 &1 & 1 & 1\\
1 & 1 &1 & 1 & 1 \\
0 & 1 &1 & 1 & 1\\
 &  0 & 1 & 0 & 1\\
 & *  & * & * & *\\
 & *  & * & * & *\\
\end{bmatrix}\\
\end{eqnarray*}
\end{example}

Define $\mathcal{F}(NN')=[0,a_1,\ldots,a_{\ell}]$ where $a_j$ is the number of $1$s in the $j^{th}$ column of $NN'$.  
Define $\mathcal{G}(NN')$ to be $(\rho_1,\ldots,\rho_z)$ where $\rho_i$ is the number of $1$s that the $i^{th}$ $0$ of $NN'$ (under the row reading order, $\prec$) is in contact with.  Note that $\mathcal{F}(N_0N_0')=f(N_0)$ whereas $\mathcal{G}(N_0N_0')=g^{-1}(N_0)$.

Let $r$ be maximal such that $N_rN_r'$ is defined ($r=10$ in  example \ref{N0} since the first $m$ rows of columns $2$ to $\ell$ of $N_{10}N_{10}'$ only contain $1$s).  We make the following claims:

\begin{enumerate}

\item There is no $0$ over a $1$ in $N_kN_k'$

\item $\mathcal{F}(N_kN_k')=\mathtt{right}^k(f(g(\lambda))$ for $0 \leq k \leq r$.

\item $\mathcal{G}(N_kN_k')=\lambda$ for $0 \leq k \leq r$.

\item $\mathtt{rowRead}(N_kN_k')$ is composed of $h(\lambda)+w^{\ell-1}(\lambda)+k$ entries that are $0$ or $1$ ending on $1$ and then followed by all $*$s.

\end{enumerate}

It is evident that these claims are true for $k=0$.  Suppose they are true for some $k \leq r$.  We will show that this implies they are also true for $k+1$ if $k+1 \leq r$.  

First suppose that $Point(N_kN_k')>1$.  
Let $A$ be the array of $\ell$ columns of varying height all flushed to the top whose $j^{th}$ column is composed of as many $1$s as are in the $j^{th}$ column of $N_kN_k'$ followed by $m$ $0$s.  As in the proof of claim \ref{fitin} we have that $degr(\mathcal{F}(N_kN_k'))$ is given my the number of $(0,1)$ pairs in $A$ in contact minus the number of $1$s below row $m$ in $A$.  Let $(i_1,j_1)$ be minimal in the ordering $\prec$ such that $A_{i_1,j_1}=0$.  Let $(i_2,j_2)$ be minimal in the ordering $\prec$ such that $j_2 \neq j_1$ and $A_{i_2,j_2}=0$. 

If $k<r$ then by definition of $r$ there is a $0$ somewhere in the first $m$ rows of columns $2$ to $\ell$ of $N_kN_k'$.    If $k=r$ then the previous is true of $N_{k-1}N_{k-1}'$ but not of $N_kN_k'$.  In light of the definition of $\mathtt{Right}$ this is only possible if  $N_{k-1}$ has a $0$ in its bottom right corner which then implies that $N_k'$ has a $0$ in its top left corner.  It follows that $A$ has a $0$ in columns $2$ to $\ell$ in a position weakly before (in the $\prec$ order) $(m+1,2)$.  Moreover, the shape of $N_kN_k'$ implies that the top $0$ in column $1$ of $A$ is in row $m+1$ or higher.  It follows that $(i_1,j_1) \prec (i_2,j_2) \preceq (m+1,2)$.  

Consider the set of positions $S= \{(i,j): j \notin \{j_1,j_2\}, \, (i_1,j_1) \preceq (i,j) \prec (i_1+m,j_1)$.  By the assumption that $Point(N_kN_k') >1 $ there are at least $m$ more $1$s in column $\ell$ of $A$ than in column $1$. Since $(i_1,j_1)$ weakly precedes (in $\prec$) the top $0$ in clumn $1$ of $A$ it follows that every position in $S$ lies in a row ending in $1$.  Therefore, every $0$ in $S$ is in contact with a $1$ in column $\ell$.  Moreover, by the construction of $S$, for every $1$ in $S$ there is a $0$ in column $j_1$ in contact with it.  Since the minimality of $(i_1,j_1)$ implies there are no empty positions in $S$, so far we have counted $(\ell-2)m$ $(0,1)$ pairs in contact and none of these pairs feature a $0$ from column $j_2$.

Now consider a fixed column $j \notin \{j_1,j_2\}$.  Suppose the lowest $1$ in column $j$ lies in $(i,j)$.   Since the lowest $0$ in column $j_2$ lies in $(i_2+m-1,j_2)$ if we have $(i_2+m-1,j_2) \prec (i,j)$ then every $0$ in column $j_2$ (there are $m$) lies in contact with a $1$ in column $j$ (there are at most $m$ $1$s in column $j$ below row $m$ due to the shape of $N_kN_k'$).  If  $(i,j) \prec (i_2+m-1,j_2)$ then for every $1$ in column $j$ below row $m$ there is a $0$ in column $j_2$ in contact with it (since $(i_2,j_2) \preceq (m+1,2)$ and every $1$ in $A$ below row $m$ is weakly after $(m+1,2)$).  Thus the number of $(0,1)$ pairs with a $0$ in column $j_2$ and a $1$ in column $j$ is at least the number of $1$s in column $j$ below row $m$.  It follows that the number of $(0,1)$ pairs in contact in $A$ that feature a $0$ in column $j_2$ is at least the number of $1$s below row $m$ in $A$.  All this implies that $degr(\mathcal{F}(N_kN_k')) \geq (\ell-2)m$ which is a contradiction so we may assume that $Point(N_kN_k') \in \{0,1\}$.

 Now suppose that $k<r$ and $Point(N_kN_k')=0$.  We prove each of the four claims listed earlier for $k+1$:

\begin{enumerate}
\item Columns $3$ through $\ell$ of $N_{k+1}N_{k+1}'$ were columns in $N_kN_k'$ so there is no $0$ over a $1$ these columns.  Column $1$ of  $N_{k+1}N_{k+1}'$ is formed by appending a $1$ to the top $m-1$ entries of column $\ell$ of $N_kN_k'$ so there is no $0$ over a $1$ in column $1$ of $N_{k+1}N_{k+1}'$. Column $2$ of $N_{k+1}N_{k+1}'$ is formed from column $1$ of  $N_kN_k'$ followed by the entries in rows $m$ through $2m-1$ of column $\ell$ of $N_kN_k'$.  However since $Point(N_kN_k')=0$ no entry on or below row $m$ of column $\ell$ of $N_kN_k'$ is equal to $1$ so column $2$  of $N_{k+1}N_{k+1}'$ also has no $0$ over a $1$.

\item As mentioned above, no entry on or below row $m$ in column $\ell$ of $N_kN_k'$ is equal to $1$ so all $1$s in column $\ell$ of $N_kN_k'$ are added to the leftmost column of $N_{k+1}N_{k+1}'$ (along with one additional $1$) and none to column $2$.  This along with the fact that $point(\mathcal{F}(N_kN_k'))=Point(N_kN_k')=0$ implies that $\mathcal{F}(N_{k+1}N_{k+1}')=right(\mathcal{F}(N_kN_k'))$.

\item It follows from positional considerations that the $i^{th}$ $0$ of $N_kN_k'$ is in contact with the same number of $1$s as is the $i^{th}$ $0$ of $N_{k+1}N_{k+1}'$ as long as the $i^{th}$ $0$ of $N_kN_k'$ does not occur in row $m-1$ and column $\ell$.  If this were the case it could potentially be in contact with one less $1$ in $N_{k+1}N_{k+1}'$ than in $N_kN_k'$.  However, this would require the entry in row $m$ and column $\ell$ of $N_kN_k'$ to be $1$ which is impossible (since the $i^{th}$ $0$ is above this position).
It follows that $\mathcal{G}(N_{k+1}N_{k+1}')=\mathcal{G}(N_kN_k')$.

\item The final property is obvious as $\mathtt{rowRead}(N_{k+1}N_{k+1}')$ is attained from $\mathtt{rowRead}(N_{k}N_{k}')$ by appending a $1$ to the beginning and deleting the last entry (which must be a $*$ as it cannot be a $1$ because $Point(N_{k}N_{k}')=0$ implies there is a $0$ in column $\ell$ of  $N_{k}N_{k}'$ and it cannot be a $0$ because this would contradict the fact that property (4) holds for $k$).   

\end{enumerate}

Now suppose that $k<r$ and $Point(N_kN_k')=1$. We prove each of the four claims for $k+1$: 

\begin{enumerate}

\item  All columns of $N_{k+1}N_{k+1}'$ appeared as columns in $N_kN_k'$ except column $2$ of $N_{k+1}N_{k+1}'$ which is column $\ell$ $N_kN_k'$ with a $1$ appended to the top and the bottom entry removed.  It follows there is no $0$ over a $1$ in $N_{k+1}N_{k+1}'$.  

\item $Point(N_kN_k')=1$ which means that $point(\mathcal{F}(N_kN_k'))=1$ and also ensures that there is no $1$ in row $2m$ and column $\ell$ of $N_kN_k'$.  This means that all of the $1$s in column $\ell$ of $N_kN_k'$ are placed into column $2$ of $N_{k+1}N_{k+1}'$ along with one additional $1$. From this and the definitions, it is easy to see that $\mathcal{F}(N_{k+1}N_{k+1}')=right(\mathcal{F}(N_kN_k'))$.  

\item We claim that the $i^{th}$ $0$ of $N_kN_k'$ is in contact with the same number of $1$s as the $i^{th}$ $0$ of $N_{k+1}N_{k+1}'$.   Since $Point(N_kN_k')=1$, column $\ell$ of $N_kN_k'$ begins with at least $m$ $1$s so that only $1$s are placed into the first $m$ rows of column $2$ of  $N_{k+1}N_{k+1}'$.  This implies the following:

\begin{itemize}
\item  If the $i^{th}$ $0$ of $N_kN_k'$ appears in position $(p,1)$ then the $i^{th}$ $0$ of $N_{k+1}N_{k+1}'$  also appears in $(p,1)$.  Since exactly one $1$ is removed from row $p$ and one $1$ is added to row $p$ the number of $1$s in row $p$ of $N_kN_k'$ and $N_{k+1}N_{k+1}'$ is the same.  But these numbers are the number of $1$s that the $i^{th}$ $0$ of $N_kN_k'$ is in contact and the number of $1$s that the $i^{th}$ $0$ of $N_{k+1}N_{k+1}'$ is in contact with, respectively.
\item If $j \in [2,\ell-1]$ and the $i^{th}$ $0$ of $N_kN_k'$ appears in position $(p,j)$ then the $i^{th}$ $0$ of $N_{k+1}N_{k+1}'$   appears in position  $(p,j+1)$.  Further, the number of $1$s in $N_kN_k'$ in the interval $[(p,j),(p+1,j)]$ under $\prec$ is the same as the number of $1$s in $N_{k+1}N_{k+1}'$ in the interval $[(p,j+1),(p+1,j+1)]$ under $\prec$.  

\item If the $i^{th}$ $0$ of $N_kN_k'$ appears in position $(p,\ell)$ then $p>m$ and the $i^{th}$ $0$ of $N_{k+1}N_{k+1}'$   appears in position  $(p+1,2)$.  In this case the $i^{th}$ $0$ of $N_kN_k'$ is in contact with exactly the number of $1$s that appear in row $p+1$ of of $N_kN_k'$.  Since $p>m$ there is no entry in position $(p+1,1)$ so the $i^{th}$ $0$ of $N_{k+1}N_{k+1}'$ is in contact with exactly the number of $1$s that appear in row $p+1$ of $N_{k+1}N_{k+1}'$.  Since neither position $(p,\ell)$ nor $(p+1,\ell)$ of $N_kN_k'$ contain a $1$ these numbers are the same.  
\end{itemize}
Thus the number of $1$s that the $i^{th}$ $0$ of $N_kN_k'$ is in contact with is the same as the number of $1$s that the $i^{th}$ $0$ of $N_{+1}N_{k+1}'$ is in contact with.  It follows that $\mathcal{G}(N_{k+1}N_{k+1}')=\mathcal{G}(N_kN_k')$.

 \item This is true for $N_{k+1}N_{k+1}'$  because it is true for $N_kN_k'$  and because row $2m$ and column $\ell$ of $N_kN_k'$ contains a $*$ (otherwise it would have to be a $1$ and would force $Point(N_kN_k') \geq 2$).

\end{enumerate}

By definition of $r$ we have that $N_r$ contains only $1$s in columns $2$ to $\ell$. Since $r$ is minimal, this is not true of $N_{r-1}$.  The only way both of these statements are possible is if $N_{r-1}N_{r-1}'$ has a $0$ in row $m$ and column $\ell$.  It follows that row $m+1$ of column $2$ of $N_rN_r'$  contains a $0$, that is, the leftmost column of $N_r'$ is all $0$s and $*$s which implies that $N_r' \in \mathbf{M}_{m \times (\ell-1)}$.  Now since $\mathcal{G}(N_rN_r')=\lambda=[p_0,p_1,\ldots,p_{\ell-2}]$,  column $1$ of $N_rN_r'$ must contain $p_0$ $0$s (since no $0$ in $N_r'$ can be in contact with more than $\ell-2$ $1$s).

If we let $\mathbf{1}_{m \times \ell}$ denote the $m \times \ell$ matrix of all $1$s and $\mathbf{1}_{m\times\ell}N_r'$ the result of appending $N_r'$ below it (flushed to the right) then this implies that $\mathcal{G}(\mathbf{1}_{m\times\ell}N_r')=[0,p_1,\ldots,p_{\ell-2}]$. The latter is equivalent to the statement that $g^{-1}(N_r')=[p_1,\ldots,p_{\ell-2}]$ or that $g(\mu)=N_r'$ (where $g$ is being applied to $\mu$ considered as an $\ell-2$ bounded partition).

 Now $|\mu| < (\ell-2)m-p_0(\ell-1) \leq (\ell-2)(m-p_0)$, so that $|N_r'| < (\ell-2)(m-p_0)$.  Suppose that $N_r'$ has more than $m-p_0$ $1$s in some column $j \in [1, \ell-2]$  or that column $\ell-1$ has $m-p_0$ $1$s.  Since $N_r' \in \mathbf{M}_{m \times (\ell-1)}$ it follows that column $1$ of $N_r'$ contains at least $m-p_0$ non-$*$ entries which must all be $0$s. Each of these $0$s is in contact with all the $1$s in its row.   Moreover, all $0$s in the first $m-p_0$ rows of columns $2$ to $\ell-1$ of $N_r'$ are in contact with a $1$ in column $j$.  Thus the number of $(0,1)$ pairs in contact in $N_r'$ is at least $(\ell-2)(m-p_0)$ so we would have $|N_r'|\geq (\ell-2)(m-p_0)$ since $N_r'$ can have no $1$s below row $m$ as it has only $m$ rows.  Thus the assumption of the second sentence of this paragraph is impossible and it follows that the bottom $p_0$ rows of $N_r'$ are all composed of all $*$s.

Now let us apply $\mathtt{Right}^{p_0(\ell-1)}$ to $N_r' \mathbf{X}_{m \times (\ell-2)}$, the array formed by appending the $m \times (\ell-2)$ matrix of all $*$s to the bottom of $N_r'$ and flushed to the right.  Before each of these $p_0(\ell-1)$ applications of $\mathtt{Right}$  the rightmost entry in row $m$ is still $*$, from which it follows that the action of $\mathtt{Right}$ is just to move each entry one position forward in the order $\prec$  (and add a $1$ in the top left position).  Therefore $\mathtt{Right}^{p_0(\ell-1)}(N_r' \mathbf{X}_{m \times (\ell-2)})$ is just the matrix whose first $p_0$ rows are all $1$s and whose next $m-p_0$ rows are the top $m-p_0$ rows of $N_r'$ and whose last $m$ rows are all $*$s.

Now suppose that $\mathtt{right}^r(f(g(\lambda)))=[0,a_1,\ldots,a_{\ell}]$ which in turn  means that $\mathcal{F}(N_rN_r')=[0,a_1,\ldots,a_{\ell}]$.  Since column $1$ of $N_r$ contains $m-p_0$ $1$s this says that $a_1=m-p_0$. Now for $j \in [2, \ell]$ column $j$ of $N_rN_r'$ contains $m$ more $1$s than column $j-1$ of $N_r'$ and column $j-1$ of $\mathtt{Right}^{p_0(\ell-1)}(N_r' \mathbf{X}_{m \times (\ell-2)})$ contains $p_0$ more $1$s than column $j-1$ of $N_r'$. Since column $j$ of $N_rN_r'$ contains $a_j$ $1$s, column $j-1$ of $\mathtt{Right}^{p_0(\ell-1)}(N_r' \mathbf{X}_{m \times (\ell-2)})$  contains $a_j-m+p_0$ $1$s.  In other words we have $\mathcal{F}(\mathtt{Right}^{p_0(\ell-1)}(N_r' \mathbf{X}_{m \times (\ell-2)}))=[0,a_2-(m-p_0),\ldots,a_{\ell}-(m-p_0)]$.  However, it is not hard to see that 
$\mathcal{F}(\mathtt{Right}^{p_0(\ell-1)}(N_r' \mathbf{X}_{m \times (\ell-2)}))=\mathtt{right}^{p_0(\ell-1)}\mathcal{F}((N_r' \mathbf{X}_{m \times (\ell-2)}))$ and since we have $g(\mu)=N_r'$ this in turn  equals $\mathtt{right}^{p_0(\ell-1)}f(g(\mu)))$.

Let $x^q=\mathtt{right}^{r+q}(f(g(\lambda)))$ and let $y^q=\mathtt{right}^{p_0(\ell-1)+q}f(g(\mu)))$.  We claim that for any $q \geq 0$ such that $x^q$ and $y^q$ are defined if $x^q=[0,a_1^q,\ldots,a_{\ell}^q]$ then $a_1^q=m-p_0$, and $a_j^q \geq m$ for $j \in [2,\ell]$, and $y^q=[0,a_2^q-(m-p_0),\ldots,a_{\ell}^q-(m-p_0)]$.  Moreover, if $q>0$ then $a_2^q>m$.  

We have already established this for $q=0$.  Suppose it is true for some $q \geq 0$ and $x^q$ and $y^q$ are rightable.  Then $a_{\ell}^q \geq m$ so $point(x^q)>0$.  It follows from this that $point(y^q)=point(x^q)-1$ and that if $x_{q+1}=[0,a_1^{q+1},\ldots,a_{\ell}^{q+1}]$ then $a_1^{q+1}=m-p_0$ and  $y^q=[0,a_2^{q+1}-(m-p_0),\ldots,a_{\ell}^{q+1}-(m-p_0)]$, and since  $a_j^q \geq m$ for $j \in [2,\ell]$ that  $a_j^{q+1} \geq m$ for $j \in [2,\ell]$.   If $point(x^q)=1$ it also follows that $a_2^{q+1}>m$ since $a_{\ell}^q \geq m$.  Additionally if $point(x^q)>1$ and $q>0$  then $a_2^{q+1}=a_2^{q}$ so it follows that $a_2^{q+1}>m$ since $a_2^q >m$.  It is only left to show that $a_2^{q+1}>m$ if $point(x^q)>1$ and $q=0$.  But the fact that $Point(N_rN_r') \in \{0,1\}$ shows that  $point(x^0)>1$ is impossible.

 Now let $q$ be minimal such that one or both of $x^q$ and $y^q$ is not rightable.   Suppose that $point(x^q)=s$.  Then $s>0$ and $x^q$ is rightable if and only if $s< \ell$ and $a_i^q-a_{i+2}^q < -m$ for $i \in [0,s-2]$.  Meanwhile $y^q$ is rightable if and only if $s< \ell$ and $(a_i^q-(m-p_0))-(a_{i+2}^q-(m-p_0)) < -m$ for $i \in [1,s-2]$. But $a_0^q-a_2^q=-a_2^q \leq -m$ with strict inequailty if $q>0$.  Thus unless $q=0$ the statements are equivalent.  But if $q=0$ then $s=1$ and the sets $[0,s-2]$ and $[1,s-2]$ are empty so both $x^q$ and $y^q$ are rightable.  It follows that there is some $q$ such that $x^q=\mathtt{lowest}(f(g(\lambda)))=[0,a_1^q,\ldots,a_{\ell}^q]$ where $a_1^q=m-p_0$ and $y^q=\mathtt{lowest}(f(g(\mu)))=[0,a_2^q-(m-p_0),\ldots,a_{\ell}^q-(m-p_0)]$.  This completes the proof of the theorem.  
\end{proof}

\begin{example}
Let $m=3$ and $\ell=5$ and $\lambda=(4,2,1)=[1,0,1,1]$ (as in example \ref{mats}) and $\mu=(2,1)=[0,1,1]$.   For $q \geq -r$ set $x^{q}=\mathtt{right}^{r+q}(f(g(\lambda)))$
and similarly for each  $q \geq -p_0(\ell-1)$ set $y^{q}=\mathtt{right}^{p_0(\ell-1)+q}(f(g(\mu)))$.  We have $r=10$ and $p_0(\ell-1)=4$  so that $f(g(\lambda))=x^{-10}$ and $f(g(\mu))=y^{-4}$ (shown in green).  $x^{0}=\mathtt{right}^{r}(f(g(\lambda)))$
and $y^{0}=\mathtt{right}^{p_0(\ell-1)}(f(g(\mu)))$ are shown in red.  
Since $x^{19}$ and $y^{19}$ are not rightable, $\mathtt{lowest}(f(g(\lambda)))=x^{19}$ and $\mathtt{lowest}(f(g(\mu)))=y^{19}$ (shown in blue).  

\begin{eqnarray*}
\setcounter{MaxMatrixCols}{23}
\setlength\arraycolsep{2.7pt}\begin{matrix}
\textcolor{green}{x^{-10}}&=&[&\textcolor{green}0&\textcolor{green}0&\textcolor{green}2&\textcolor{green}1&\textcolor{green}2&\textcolor{green}1&]&\phantom{12}&&&&&&&&\\
x^{-9}&=&[&0&2&0&2&1&2&]&\phantom{12}&&&&&&&&\\
x^{-8}&=&[&0&3&2&0&2&1&]&\phantom{12}&&&&&&&&\\
x^{-7}&=&[&0&2&3&2&0&2&]&\phantom{12}&&&&&&&&\\
x^{-6}&=&[&0&3&2&3&2&0&]&\phantom{12}&&&&&&&&\\
x^{-5}&=&[&0&1&3&2&3&2&]&\phantom{12}&&&&&&&&\\
x^{-4}&=&[&0&3&1&3&2&3&]&\phantom{12}&[&\textcolor{green}0&\textcolor{green}0&\textcolor{green}1&\textcolor{green}0&\textcolor{green}1&]&=&\textcolor{green}{y^{-4}}\\
x^{-3}&=&[&0&3&4&1&3&2&]&\phantom{12}&[&0&2&0&1&0&]&=&y^{-3}\\
x^{-2}&=&[&0&3&3&4&1&3&]&\phantom{12}&[&0&1&2&0&1&]&=&y^{-2}\\
x^{-1}&=&[&0&3&4&3&4&1&]&\phantom{12}&[&0&2&1&2&0&]&=&y^{-1}\\
\textcolor{red}{x^0}&=&[&\textcolor{red}0&\textcolor{red}2&\textcolor{red}3&\textcolor{red}4&\textcolor{red}3&\textcolor{red}4&]&\phantom{12}&[&\textcolor{red}0&\textcolor{red}1&\textcolor{red}2&\textcolor{red}1&\textcolor{red}2&]&=&\textcolor{red}{y^0}\\
x^1&=&[&0&2&5&3&4&3&]&\phantom{12}&[&0&3&1&2&1&]&=&y^1\\
x^2&=&[&0&2&4&5&3&4&]&\phantom{12}&[&0&2&3&1&2&]&=&y^2\\
x^3&=&[&0&2&5&4&5&3&]&\phantom{12}&[&0&3&2&3&1&]&=&y^3\\
&\phantom{12}&\phantom{12}&\phantom{12}&\phantom{12}&\phantom{12}
&\phantom{12}&\phantom{12}&\phantom{12}&\phantom{12}&\phantom{12}
&\phantom{12}&\phantom{12}&\phantom{12}&\phantom{12}&\phantom{12}
&\phantom{12}&\phantom{12}&\phantom{12}&\phantom{12}\\
\vdots&&&\vdots&\vdots&\vdots&\vdots&\vdots&\vdots&&\phantom{12}&&\vdots&\vdots&\vdots&\vdots&\vdots&&&\vdots\\
&&&&&&&&&&\phantom{12}&&&\\
x^{15}&=&[&0&2&5&7&8&9&]&\phantom{12}&[&0&3&5&6&7&]&=&y^{15}\\
x^{16}&=&[&0&2&5&7&10&8&]&\phantom{12}&[&0&3&5&8&6&]&=&y^{16}\\
x^{17}&=&[&0&2&5&7&9&10&]&\phantom{12}&[&0&3&5&7&8&]&=&y^{17}\\
x^{18}&=&[&0&2&5&7&9&11&]&\phantom{12}&[&0&3&5&7&9&]&=&y^{18}\\
\textcolor{blue}{x^{19}}&=&[&\textcolor{blue}0&\textcolor{blue}2&\textcolor{blue}5&\textcolor{blue}7&\textcolor{blue}9&\textcolor{blue}{12}&]&\phantom{12}&[&\textcolor{blue}0&\textcolor{blue}3&\textcolor{blue}5&\textcolor{blue}7&\textcolor{blue}{10}&]&=&\textcolor{blue}{y^{19}}\\

\end{matrix}
\end{eqnarray*}
\end{example}

\section{Strings and their Extensions}

\begin{definition}
We say that $x \in \mathbf{D}_{\ell,m}$ is disconnected if there exists $i \geq 0$ such that $\mathtt{left}^i(x) \in \mathbf{D}_{\ell,m}$ and is unleftable.  We say $x \in \mathbf{D}_{\ell,m}$ is connected if it is not disconnected. 
\end{definition}

\begin{definition}
Let $\lambda \in \mathbf{P}_{\ell-1}^{<(\ell-1)m}$. Define $\mathtt{string}(\lambda)=\{v_0,\ldots,v_b\}$ where we define  $v_0=f(g(\lambda))$ and where $v_b=\mathtt{lowest}(v_0)$ and for $0 < i < b$ $v_i=\mathtt{right}^i(v_0)$.  (Note that the definition depends on $m$ since the definition of $\mathtt{lowest}$ depends on $m$.)
\end{definition}

\begin{claim}\label{connected}
Let $d < (\ell-1)m$ and let $\mathbf{C}_{\ell,m}^d \subseteq \mathbf{D}_{\ell,m}$ denote the subset of elements which are connected and have degree $d$.  
\begin{eqnarray*}
\mathbf{C}_{\ell,m}^d=\bigcup_{\lambda \in \mathbf{P}_{(\ell-1)}^d} \mathtt{string}(\lambda)
\end{eqnarray*}
where the union is disjoint and $\lambda \in \mathbf{P}_{(\ell-1)}^d$ if and only if $\lambda \in \mathbf{P}_{(\ell-1)}$ and $|\lambda|=d$.
\end{claim}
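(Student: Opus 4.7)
The plan is to verify the two inclusions together with disjointness of the union. The forward inclusion $\bigcup_\lambda \mathtt{string}(\lambda) \subseteq \mathbf{C}_{\ell,m}^d$ is the easier direction. Fixing $\lambda \in \mathbf{P}_{\ell-1}^d$ and writing $\mathtt{string}(\lambda) = \{v_0, \ldots, v_b\}$, Claims \ref{g} and \ref{f} give $v_0 = f(g(\lambda)) \in \mathbf{T}_{\ell,m}^{<(\ell-1)m}$ with $degr(v_0) = |\lambda| = d$, and Claim \ref{rightcheck}(3) propagates $degr(v_i) = d$ to every $v_i$. For connectedness, I would argue that the left-orbit of $v_i$ within $\mathbf{D}_{\ell,m}$ is exactly $\{v_0, \ldots, v_i\}$: Claim \ref{rightcheck}(2) gives $\mathtt{left}(v_k) = v_{k-1}$ for $k \geq 1$, while for $v_0$ itself Claim \ref{fitin} forces every position coordinate to be at most $m$, so $pair(v_0) = 0$ makes $v_0$ technically leftable, but $\mathtt{cycleleft}(0, v_0)$ has final coordinate $a_1 - 1 = -1$ and hence is not a Dyck path. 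Thus every element of the in-$\mathbf{D}_{\ell,m}$ left-orbit of $v_i$ is leftable, so no $v_i$ can witness the definition of disconnected.

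For the reverse inclusion, given $x \in \mathbf{C}_{\ell,m}^d$ I would iterate $\mathtt{left}$; since $\mathtt{left}$ strictly decreases area (Claim \ref{leftcheck}(4)) the process terminates at some $y = \mathtt{left}^k(x) \in \mathbf{D}_{\ell,m}$. Connectedness of $x$ forbids $y$ from being an unleftable element of $\mathbf{D}_{\ell,m}$, so termination must come from $\mathtt{left}(y) \notin \mathbf{D}_{\ell,m}$ even though $y$ is leftable. Inspecting the proof of Claim \ref{leftcheck}(1), the only available obstruction is the terminal coordinate $a_{pair(y)+1} - 1 \geq 0$ failing; when $pair(y) = r > 0$ this would force $a_{r-1} - a_{r+1} \geq -m$, contradicting the minimality of $r$. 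Hence $pair(y) = 0$ and $a_1 = 0$, i.e., $y \in \mathbf{T}_{\ell,m}^{<(\ell-1)m}$. Setting $\lambda = g^{-1}(f^{-1}(y))$, Claims \ref{f} and \ref{g} realize $y$ as the $v_0$ of $\mathtt{string}(\lambda)$ with $|\lambda| = degr(y) = d$, and since $x = \mathtt{right}^k(y)$ lies on the right-orbit that terminates at $\mathtt{lowest}(y) = v_b$, we obtain $x = v_k$ for some $k \leq b$.

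Disjointness follows because the terminal element $y$ produced by the iterated $\mathtt{left}$ in the previous paragraph is uniquely determined by $x$, so if $x \in \mathtt{string}(\lambda) \cap \mathtt{string}(\mu)$ then $v_0^{\lambda} = v_0^{\mu}$, forcing $\lambda = \mu$ by the injectivity of $f \circ g$. The main subtlety I expect to handle carefully is cleanly distinguishing \emph{unleftable in the sense of the definition} from \emph{the output of $\mathtt{left}$ fails to be a Dyck path}; the whole argument hinges on the observation that low-degree elements of $\mathbf{T}_{\ell,m}$ fall in the second category but not the first, and this is precisely what allows them to serve as the unique leftmost endpoints of connected strings.
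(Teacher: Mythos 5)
Your proposal is correct and takes essentially the same route as the paper's proof: iterate $\mathtt{left}$ (degree preserved and area decreased by Claim \ref{leftcheck}) until one reaches a leftable element whose image leaves $\mathbf{D}_{\ell,m}$, identify that element as lying in $\mathbf{T}_{\ell,m}^{<(\ell-1)m}$ via the obstruction $a_{pair+1}=0$ (equivalently, the contrapositive of Claim \ref{leftcheck}(1)), pull back through $f\circ g$, and get disjointness from the determinism of the left-iteration terminating at $v_0$, whose image under $\mathtt{left}$ has a negative final coordinate. The only real difference is that you also spell out the inclusion $\bigcup_{\lambda}\mathtt{string}(\lambda)\subseteq\mathbf{C}_{\ell,m}^d$ (connectedness and degree of every string element), which the paper leaves implicit, tacitly using, as the paper also does, that a negative coordinate persists under further applications of $\mathtt{left}$ so the orbit never re-enters $\mathbf{D}_{\ell,m}$.
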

\begin{proof}
Assume $x \in \mathtt{string}(\lambda) \cap \mathtt{string}(\lambda^*)$ where $\lambda \neq \lambda^*$. Thus we have  $\mathtt{left}^i(x)=t$ and $\mathtt{left}^{i^*}(x)=t^*$ for some $i \neq i^*$ and $t \neq t^* \in \mathbf{T}_{\ell,m}^d$ by injectivity of $f \circ g$ and the fact that $\mathtt{left}$ and $\mathtt{right}$ are inverses.  WLOG $i<i^*$ so $t^*=\mathtt{left}^{i^*-i}(t)$.  Writing $t=[0,a_1,\ldots,a_{\ell}]$ we have $a_1=0$ since $t \in \mathbf{T}_{\ell,m}^d$ which implies $a_2 \leq m$ so that $0-a_2 \geq -m$ and $pair(t)=0$.   Since $a_1=0$ this implies $\mathtt{left}(t)$ contains a negative position coordinate and therefore  so does $t^*=\mathtt{left}^{i^*-i}(t)$, which contradicts $t^* \in \mathbf{D}_{\ell,m}^d$.

Now let $x \in \mathbf{C}_{\ell,m}^d$. Suppose there is no $i \geq 0$ such that $\mathtt{left}^i(x) \in \mathbf{T}_{\ell,m}^d$.  Since it is clear we cannot have $\mathtt{left}^i(x) \in \mathbf{D}_{\ell,m}^d$ for all $i$ (eventually some position coordinate must become negative) there must exist $x' \in \mathbf{D}_{\ell,m}^d \setminus \mathbf{T}_{\ell,m}^d$ with $\mathtt{left}(x') \notin \mathbf{D}_{\ell,m}^d$.  But this contradicts claim \ref{leftcheck}.  Therefore such $i$ exists and $x \in \mathtt{string}(g^{-1}(f^{-1}( \mathtt{left}^i(x) )))$.

\end{proof}

\begin{claim}\label{takeoff}
Suppose $x=[0,a_1,\ldots,a_{\ell}] \in \mathbf{N}_{\ell,m}^d$ where $d < m(\ell-2)$.  Then $x'=[0,a_2-a_1,\ldots,a_{\ell}-a_1] \in \mathbf{N}_{\ell-1,m}^{d-(m-a_1)(\ell-1)}$
\end{claim}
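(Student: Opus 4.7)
The plan is to reduce everything to Corollary \ref{corfitsin}, which already performs exactly the coordinate subtraction required and yields both $x'\in\mathbf{D}_{\ell-1,m}$ and the identity $degr(x)=(m-a_1)(\ell-1)+degr(x')$, provided we can verify the hypothesis $a_j\geq m$ for every $j\in[2,\ell]$. Once that hypothesis is in hand, the degree computation is immediate and gives $degr(x')=d-(m-a_1)(\ell-1)$, matching the numerical content of the claim.

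The heart of the proof is therefore to deduce $a_j\geq m$ for $j\in[2,\ell]$ from the degree bound $d<m(\ell-2)$ together with the structural property built into $\mathbf{N}_{\ell,m}^d$ (by context, the set of terminal, i.e.\ unrightable, elements of strings of degree $d$). I would argue by contradiction: assume some $a_j<m$ with $j\in[2,\ell]$, and show $degr(x)\geq m(\ell-2)$. The natural tool is Corollary \ref{bound3}, whose hypotheses ask for a minimal index $i>0$ with $a_i-a_{i+2}\geq -m$ together with $a_{i+1}-a_\ell>m+1$. Unrightability of $x$ is precisely what forces the existence of such an $i$ (namely $i=pair(x)$, using the convention $a_s=0$ for $s>\ell$). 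The secondary inequality $a_{i+1}-a_\ell>m+1$ I would verify by showing that its failure would force $point(x)\leq i$, after which $x$ would be rightable at $pair(x)=i$, contradicting membership in $\mathbf{N}_{\ell,m}^d$.

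With $x'\in\mathbf{D}_{\ell-1,m}$ and the correct degree in hand, the final step is to verify that $x'$ inherits the defining property of $\mathbf{N}$, i.e.\ that $x'$ is itself unrightable as an $(\ell-1,m)$-Dyck path. Here I would argue that a hypothetical right-move on $x'$ at some index $r'$ would lift to a right-move on $x$ at index $r'+1$: the lifting simply restores $a_1$ in position $1$ and adds $a_1$ back to the entries in positions $2,\ldots,\ell$. One checks that all the inequalities $a_i-a_{i+2}<-m$ defining rightability in Definition of $\mathtt{right}$ are preserved under this shift (since shifting all relevant coordinates by the same constant does not affect the differences), contradicting unrightability of $x$.

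The main obstacle I anticipate is the deduction of $a_j\geq m$ from the degree bound. Corollary \ref{bound3} has several interlocking hypotheses, and in the edge case where $a_{i+1}-a_\ell$ is close to $m+1$ the interaction between $pair$ and $point$ becomes delicate. If a direct application of Corollary \ref{bound3} stalls, I would fall back on either truncating $x$ and applying Claim \ref{bound1} to a suitable prefix, or counting contact pairs of $0$s and $1$s in the array representation of $x$ directly, as in the proof of Claim \ref{fitin}, to obtain the required $(\ell-2)m$ lower bound on $degr(x)$ by hand. Everything else in the argument is formal manipulation of the definitions.
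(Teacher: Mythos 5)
Your outline founders on the identification of $\mathbf{N}_{\ell,m}^d$. This set is not the set of terminal (unrightable) elements of strings; it is the set of \emph{disconnected} paths of degree $d$, i.e.\ those $x$ for which some iterate $z=\mathtt{left}^q(x)$ lies in $\mathbf{D}_{\ell,m}^d$ and is unleftable --- this is exactly the property the paper's proof invokes in its first sentence, and it is what the definition of $\mathtt{extendable}$ requires (the extensions must partition the complement of $\bigcup_\lambda\mathtt{string}(\lambda)$). Elements of $\mathbf{N}_{\ell,m}^d$ need not be unrightable: in the paper's example with $(\ell,m,d)=(4,3,5)$, the extension element $[0,3,6,9,3]$ is in $\mathbf{N}_{4,3}^5$ and is rightable, with $\mathtt{right}([0,3,6,9,3])=[0,3,4,6,9]$. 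Consequently the structural input you lean on (``$x$ is unrightable'') is simply not available, and your final step, checking that $x'$ is unrightable as an $(\ell-1,m)$-path, verifies the wrong inheritance property: what must be shown is that some left-iterate of $x'$ is unleftable. (Your lifting argument also has an internal problem: rightability of $x$ at $r'+1$ would require $a_0-a_2<-m$, i.e.\ $a_2>m$, which is precisely what is being proved, so the ``constant shift'' does not dispose of the index-$0$ inequality.)

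Even after replacing your reading of $\mathbf{N}$ by the correct one, the plan of applying Corollary \ref{bound3} directly to $x$ does not work, because the hypothesis $a_{i+1}-a_\ell>m+1$ at $i=pair$ is an \emph{unleftability} condition, and it holds at the far end $z=\mathtt{left}^q(x)$ of the chain, not at $x$ itself; leftability of $x$ at $pair(x)$ is in no tension with membership in $\mathbf{N}_{\ell,m}^d$, so your contrapositive (``failure would make $x$ rightable'') has no force. The paper's proof supplies exactly the machinery your sketch is missing: it assumes some coordinate of some $\mathtt{left}^r(x)$ is small ($c_2\le m$ or $c_j<m$, $j\in(2,\ell]$), propagates this property forward along the left-chain by a maximal-$r$ argument until it reaches the unleftable element $z$, applies Corollary \ref{bound3} to $z$ (using $degr(z)=degr(x)=d<(\ell-2)m$) to get a contradiction, and only then concludes $c_2>m$ and $c_j\ge m$ for \emph{every} $\mathtt{left}^r(x)$, $0\le r\le q$. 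This uniform bound is what allows Corollary \ref{corfitsin} to be applied, guarantees $pair(\mathtt{left}^r(x))>0$ at every step so that $\mathtt{left}^r(x')$ is obtained from $\mathtt{left}^r(x)$ by deleting the second coordinate and subtracting $a_1$, and finally transfers unleftability from $z$ to $\mathtt{left}^q(x')$, establishing $x'\in\mathbf{N}_{\ell-1,m}^{d-(m-a_1)(\ell-1)}$. Without the chain-propagation and tracking induction, your argument establishes at most $x'\in\mathbf{D}_{\ell-1,m}$ with the right degree, not membership in $\mathbf{N}$.
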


\begin{proof}
Since $x \in \mathbf{N}_{\ell,m}^d$ there is $q \geq 0$ such that $z=\mathtt{left}^q(x)=[0,e_1,\ldots,e_{\ell}] \in \mathbf{D}_{\ell,m}^d$ is unleftable.  Suppose there is some $r \in [0,q]$ such that writing  $y=\mathtt{left}^r(x)=[0,c_1,\ldots,c_{\ell}]$ then either $c_2 \leq m$ or $c_j<m$ for some $j \in (2,\ell]$ and let us assume we have chosen the maximal such $r$.  Suppose that $r<q$ and let $\mathtt{left}(y)=[0,\overline{c_1},\overline{c_2},\ldots,\overline{c_{\ell}}]$.  If $c_2 \leq m$  then $pair(y)=0$ and so $\overline{c_{\ell}}=c_1-1 <m$ since $0-c_1 \geq -m$.  If $c_j<m$ for some $j \in (2,\ell]$  then either $\overline{c_j}=c_j$ or $\overline{c_{j-1}}=c_j$ or $\overline{c_{\ell}}=c_j-1$ so one of these is less than $m$.  Therefore either $\overline{c_2} \leq m$ or else $\overline{c_j}<m$ for some $j \in (2,\ell]$ which  contradicts the maximality of $r$.

Therefore we must have that $r=q$ so either $e_2 \leq m$ or $e_j<m$ for some $j \in (2,\ell]$.  Now $pair(z)>0$ since otherwise the fact that $z$ is unleftable would imply that $e_1-e_{\ell}>m+1$ which is impossible since $0-e_1 \geq -m$.     Therefore $i=pair(z)$ is the minimal $i>0$ such that $e_i-e_{i+2} \geq -m$ and  we have $e_{i+1}-e_{\ell}>m+1$ since $z$ is unleftable. Therefore the hypotheses of corollary \ref{bound3} apply to $z$ so we have $degr(z) \geq (\ell-2)m$ which is a contradiction since $degr(x)=degr(z)$.

 It follows that for all $r \in [0,q]$ such that $y=\mathtt{left}^r(x)=[0,c_1,\ldots,c_{\ell}]$ we have $c_2>m$ and $c_j \geq m$ for all $j \in (2, \ell]$.  Therefore corollary \ref{corfitsin} implies  that for all such $y$ we  have that the element $y'=[0,c_2-c_1,\ldots,c_{\ell}-c_1] \in \mathbf{D}_{\ell-1,m}^{d-(m-c_1)(\ell-1)}$. Moreover, since $c_2>m$ it follows that $pair(y)>0$ for all such $y$.  Since $x=[0,a_1,\ldots,a_{\ell}]$ and  $x'=[0,a_2-a_1,\ldots,a_{\ell}-a_1]$ this implies (by induction) that if $\mathtt{left}^r(x)=[0,c_1,\ldots,c_{\ell}]$ then  $\mathtt{left}^r(x')=[0,c_2-a_1,\ldots,c_{\ell}-a_1]$.    When $r=q$ this implies that  $\mathtt{left}^q(x')=z'$ where $z'=[0,e_2-a_1,\ldots,e_{\ell}-a_1]$.  It follows from the facts that $pair(z)>0$ and that $z=[0,e_1,\ldots,e_{\ell}]$ is unleftable that $z'$ is also unleftable which implies that    $x' \in \mathbf{N}_{\ell-1,m}^{d-(m-a_1)(\ell-1)}$.

\end{proof}

\begin{claim}\label{addon}
Suppose that $x=[0,a_2,\ldots,a_{\ell}] \in \mathbf{N}_{\ell-1,m}^d$ and suppose $a_1 \leq m$ is such that $(m-a_1)(\ell-1)+d < m(\ell-2)$.  Then $x'=[0,a_1,a_2+a_1,\ldots,a_{\ell}+a_1] \in  \mathbf{N}_{\ell,m}^{d+(m-a_1)(\ell-1)}$ 
\end{claim}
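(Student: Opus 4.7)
The plan is to mirror the strategy of Claim \ref{takeoff}, iterating the $\mathtt{left}$ map in parallel on $x$ and its lift $x'$ until both reach unleftable witnesses of the same degree.

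First I will apply Corollary \ref{corfitsout} to conclude $x' \in \mathbf{D}_{\ell,m}$ with $degr(x') \leq d+(m-a_1)(\ell-1)$, so that the hypothesis $(m-a_1)(\ell-1)+d<m(\ell-2)$ forces $degr(x')<m(\ell-2)$. Since $x \in \mathbf{N}_{\ell-1,m}^d$, there is $q\geq 0$ such that the iterates $x=x_0,x_1,\ldots,x_q$ under $\mathtt{left}$ all lie in $\mathbf{D}_{\ell-1,m}^d$ (by Claim \ref{leftcheck}) and $x_q$ is unleftable. Writing $x_k=[0,c_2^{(k)},\ldots,c_\ell^{(k)}]$, I define the lift $x_k':=[0,a_1,c_2^{(k)}+a_1,\ldots,c_\ell^{(k)}+a_1]$, which Corollary \ref{corfitsout} likewise places in $\mathbf{D}_{\ell,m}$.

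The heart of the argument is to show that no iterate $x_k$ triggers the first hypothesis of Corollary \ref{bound3} on $x_k'$---equivalently, $c_2^{(k)}>m-a_1$ and $c_j^{(k)}\geq m-a_1$ for every $j\in(2,\ell]$. I argue by contradiction: pick the maximal $k$ for which this fails and show $k=q$ by a case analysis on $pair(x_k)$ that parallels the one in the proof of Claim \ref{takeoff}. In each case the offending coordinate is either fixed in place, shifted to a still-bad position, or sent to the new trailing coordinate, where decrementing by one keeps it too small. When $k=q$, the unleftability of $x_q$ supplies the secondary hypothesis of Corollary \ref{bound3} for $x_q'$ with minimal index $i=pair(x_q)+1$, so the corollary yields $degr(x_q')\geq m(\ell-2)$, contradicting $degr(x_q')\leq d+(m-a_1)(\ell-1)<m(\ell-2)$.

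Once the good condition is secured, a straightforward induction identifies $\mathtt{left}^k(x')=x_k'$ for every $k\in[0,q]$: the strict inequality $c_2^{(k)}+a_1>m$ rules out $r=0$ in the computation of $pair(x_k')$, forcing $pair(x_k')=pair(x_k)+1$ and making $\mathtt{left}$ commute with the lift. Corollary \ref{corfitsin} then applies to $x_q'$ (whose coordinates past index $1$ are all $\geq m$) and gives $degr(x_q')=d+(m-a_1)(\ell-1)$, and since $\mathtt{left}$ preserves degree this equals $degr(x')$. The same $pair$ identification promotes the unleftability of $x_q$ to unleftability of $x_q'$, so $x'$ reaches an unleftable element of $\mathbf{D}_{\ell,m}$ under iteration of $\mathtt{left}$ and thus lies in $\mathbf{N}_{\ell,m}^{d+(m-a_1)(\ell-1)}$. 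I expect the main obstacle to be the case analysis that propagates the bad condition: because $c_2^{(k)}$ plays a subtly different structural role in the $(\ell-1,m)$-path $x_k$ than $c_2$ did in the $(\ell,m)$-path setting of Claim \ref{takeoff}, several sub-cases on $pair(x_k)$ must be checked by hand, and one must verify carefully that $pair(x_q)+1$ supplies the minimal index required by Corollary \ref{bound3}.
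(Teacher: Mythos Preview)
Your proposal is correct and follows essentially the same approach as the paper's own proof: both iterate $\mathtt{left}$ on $x$ to an unleftable witness $z=x_q$, show by a maximal-index contradiction argument (propagating the ``bad'' condition forward and then invoking Corollary~\ref{bound3} on the lift $z'$) that every iterate satisfies $c_2^{(k)}>m-a_1$ and $c_j^{(k)}\geq m-a_1$, and then use this to identify $\mathtt{left}^k(x')$ with the lift of $\mathtt{left}^k(x)$ and to transfer unleftability. The only cosmetic difference is that you apply Corollary~\ref{corfitsin} at $x_q'$ and then pull the degree back to $x'$ via degree-preservation of $\mathtt{left}$, whereas the paper applies it directly at $x'$; these are equivalent.
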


\begin{proof}

Since $x \in \mathbf{N}_{\ell,m}^d$ there is $q \geq 0$ such that $z=\mathtt{left}^q(x)=[0,e_2,\ldots,e_{\ell}] \in \mathbf{D}_{\ell,m}^d$ is unleftable.  Suppose there is some $r \in [0,q]$ such that writing  $y=\mathtt{left}^r(x)=[0,c_2,\ldots,c_{\ell}]$ then either $c_2 \leq m-a_1$ or $c_j<m-a_1$ for some $j \in (2,\ell]$ and let us assume we have chosen the maximal such $r$.  Suppose that $r<q$ and let $\mathtt{left}(y)=[0,\overline{c_2},\ldots,\overline{c_{\ell}}]$.  If $c_2 \leq m-a_1$  then either $\overline{c_2}=c_2 \leq m-a_1$ or else $\overline{c_{\ell}}=c_2-1<m-a_1$.  If $c_j<m$ for some $j \in (2,\ell]$  then either $\overline{c_j}=c_j$ or $\overline{c_{j-1}}=c_j$ or $\overline{c_{\ell}}=c_j-1$ so one of these is less than $m-a_1$.  Therefore either $\overline{c_2} \leq m-a_1$ or else $\overline{c_j}<m-a_1$ for some $j \in (2,\ell]$ which  contradicts the maximality of $r$.  

It follows that we must have that $r=q$ which means that either $e_2 \leq m-a_1$ or $e_j<m-a_1$ for some $j \in (2,\ell]$.  Since $z$ is unleftable, if $i$ is minimal such that $e_i-e_{i+2} \geq-m$, then $e_{i+1}-e_{\ell}>m+1$ (note that $i>0$ by definition).  Further corollary \ref{corfitsout} implies that the element $z'=[0,a_1,e_2+a_1,\ldots,e_{\ell}+a_1] \in \mathbf{D}_{\ell,m}^{\leq d+(m-a_1)(\ell-1)}$ and the previous sentence implies that if $i>0$ is minimal such that $(e_i+a_1)-(e_{i+2}+a_1) \geq-m$ then $(e_{i+1}+a_1)-(e_{\ell}+a_1)>m+1$. Since the first sentence of the paragraph implies that either $e_2+a_1 \leq m$ or $e_j+a_1<m$ for some $j \in (2,\ell]$ the hypotheses of corollary \ref{bound3} apply to $z'$.  It follows that we have $degr(z') \geq (\ell-2)m$ which means that $d+(m-a_1)(\ell-1) \geq (\ell-2)m$ which contradicts an assumption of the original claim.

It follows that for any $r \in [0,q]$, if we write $y=\mathtt{left}^r(x)=[0,c_2,\ldots,c_{\ell}]$ then $c_j \geq m-a_1$ for all $j \in (2,\ell]$ and  $c_2 > m-a_1$, the latter of which implies $pair([0,a_1,c_2+a_1,\ldots,c_{\ell}+a_1])>0$. Now since $x=[0,a_2,\ldots, a_{\ell}]$ and  $x'=[0,a_1,a_2+a_1,\ldots,a_{\ell}+a_1]$ this implies (by induction) that if $\mathtt{left}^r(x)=[0,c_2,\ldots,c_{\ell}$] we must have $\mathtt{left}^r(x')=[0,a_1,c_2+a_1,\ldots,c_{\ell}+a_1]$. When $r=q$ this means that we have $\mathtt{left}^q(x')=z'$ where  $z'=[0,a_1,e_2+a_1,\ldots,e_{\ell}+a_1]$ and that $pair(z')>0$, which in light of the fact that $z$ is unleftable implies $z'$ is unleftable.   When $r=0$ the first sentence implies that for all $j \in [2,\ell]$, $a_j \geq m-a_1$, i.e., $a_j+a_1 \geq m$ so that  we may apply corollary \ref{corfitsin} to $x'=[0,a_1,a_2+a_1,\ldots,a_{\ell}+a_1]$ to see that $degr(x')=degr(x)+(m-a_1)(\ell-1)$.  Since corollary \ref{corfitsout} implies $x' \in \mathbf{D}_{\ell,m}$, it follows that $x' \in  \mathbf{N}_{\ell,m}^{d+(m-a_1)(\ell-1)}$
\end{proof}

\begin{definition}
Fix $\ell$, $m$, and $d$ and set $M=m(\ell+1)(\ell)/2$.  Suppose that for each $\lambda \in \mathbf{P}_{\ell-1}^d$ such that $\mathtt{string}(\lambda)=(v_0,\ldots,v_s)$ we have either that $area(v_s)=M-|\lambda|-h(\lambda)$ or else  $area(v_s)<M-|\lambda|-h(\lambda)$ and there is a way to chose an extension $\mathtt{ext}(\lambda)=(w_{1},\ldots,w_e)$ with $w_i \in \mathbf{N}_{\ell,m}^d$  such $area(w_1)=area(v_s)+1$, $area(w_{i+1})=area(w_i)+1$, and $area(w_e)=M-|\lambda|-h(\lambda)$. Further, suppose this can be done in such a way that:
\begin{eqnarray*}
\mathbf{N}_{\ell,m}^d=\bigcup_{\lambda \in P_{\ell-1}^d} \mathtt{ext}(\lambda)
\end{eqnarray*}
where the union is disjoint.  In this case we define $\mathtt{extendable}(\ell,m,d)=\mathtt{true}$ and equal to $\mathtt{false}$ otherwise.
\end{definition}

\begin{example}
Let $\ell=4$, $m=3$, and $d=5$ so that $M=30$.  The five elements of  $\mathbf{P}_4^5$ are shown below along with their diagramatic representations above them.  Below each is an interval of the form $[P,Q/R]$ where $P=w^{\ell-1}(\lambda)=area(f(g(\lambda)))$ and $Q=area(\mathtt{lowest}(f(g(\lambda))))$ and $R=M-d-h(\lambda)$. Further below that are all elements (appearing in black) of $\mathtt{string}(\lambda)$ arranged by value of $area$ (the numerical value is shown on the left).  The red entries appearing at the bottom of a string comprise the elements of $\mathtt{ext}(\lambda)$.  Since the union of the six red elements turns out to be precisely $\mathbf{N}_{4,3}^5$ we see that $\mathtt{extendable}(4,3,5)=\mathtt{true}$.

 \begin{align*}
&&\begin{Young}
&&\cr
&\cr
\end{Young}
&&
\begin{Young}
&&\cr
\cr
\cr
\end{Young}
&&
\begin{Young}
&\cr
&\cr
\cr
\end{Young}
&&
\begin{Young}
&\cr
\cr
\cr
\cr
\end{Young}
&&
\begin{Young}
\cr
\cr
\cr
\cr
\cr
\end{Young}\\
&&\lambda=[1,1,0] &&\lambda=[1,0,2]&&\lambda=[0,2,1]&&\lambda=[0,1,3]&&\lambda=[0,0,5]\\
  &&   [5,23]        &&   [4,22]       &&       [3,20/22]    &&  [3,19/21]    &&  [2,18/20] \\
  &&                  &&                &&                 &&                &&  \\
2 &&                  &&                &&                 &&                && [0,0,0,2,0]\\
3 &&                  &&                &&  [0,0,2,0,1] && [0,0,1,2,0] && [0,1,0,0,2]\\
4 &&                  && [0,0,1,2,1] &&  [0,2,0,2,0] && [0,1,0,1,2] &&[0,3,1,0,0] \\
5 && [0,0,2,2,1]   && [0,2,0,1,2] &&  [0,1,2,0,2] && [0,3,1,0,1] &&[0,1,3,1,0] \\
6 && [0,2,0,2,2]   && [0,3,2,0,1] &&  [0,3,1,2,0]  && [0,2,1,1,3] &&[0,1,1,3,1]\\
\vdots  &&  \vdots          &&  \vdots       &&  \vdots         &&  \vdots         &&    \vdots     \\
17&& [0,2,4,6,5] && [0,2,5,4,6] && [0,3,3,6,5] && [0,3,5,3,6] && [0,3,4,7,3]\\
18&& [0,2,4,6,6] && [0,2,5,7,4] && [0,3,6,3,6] && [0,3,5,7,3] && [0,3,4,4,7]\\
19&& [0,2,4,7,6] && [0,2,5,5,7] && [0,3,6,7,3] && [0,3,4,5,7] &&  \textcolor{red}{ [0,3,5,8,3]}\\
20&& [0,2,4,7,7] && [0,2,5,8,5] && [0,3,4,6,7] &&  \textcolor{red}{[0,3,6,8,3]}&&  \textcolor{red}{ [0,3,4,5,8]} \\
21&& [0,2,4,7,8] && [0,2,5,6,8] && \textcolor{red} {[0,3,6,9,3]} &&  \textcolor{red} {[0,3,4,6,8] }         && \\
22&& [0,2,4,7,9] && [0,2,5,6,9] && \textcolor{red}  {[0,3,4,6,9]  }          &&                 && \\
23&& [0,2,4,7,10] &&                &&                 &&                && \\
\end{align*}

\end{example}

Before stating the next claim it will be convenient to translate Theorem \ref{project} and the last two claims into step coordinates:

\begin{corollary}\label{corproject}
Let $\lambda=[p_0,\ldots,p_{\ell-2}] \in \mathbf{P}_{\ell-1}$ be the partition with $p_i$ parts of size $\ell-1-i$ and let $\mu=[p_1,\ldots,p_{\ell-2}] \in \mathbf{P}_{\ell-2}$ be the partition with $p_i$ parts of size $\ell-1-i$. Further suppose that $|\lambda| < (\ell-2)m$.   Suppose that $x=\mathtt{lowest}(f(g(\mu)))$ and $x'=\mathtt{lowest}(f(g(\mu)))$.  Then if $x=(x_1,\ldots,x_{\ell-1},x_{\ell})$ we have $x'=(p_0,x_1,\ldots,x_{\ell-1},-)$.  
\end{corollary}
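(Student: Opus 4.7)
The plan is to read the corollary as a direct translation of Theorem~\ref{project} from position coordinates into step coordinates, using that $\mathtt{lowest}(f(g(\mu)))$ is an $(\ell-1,m)$-Dyck path (carrying $\ell$ step entries, indexed $1,\ldots,\ell$ in the corollary's notation) while $\mathtt{lowest}(f(g(\lambda)))$ is an $(\ell,m)$-Dyck path with $\ell+1$ step entries indexed $0,\ldots,\ell$. Under this reading the claim is that prepending $p_0$ to the step-coordinate sequence of the shorter path gives the step-coordinate sequence of the longer one, with the final entry determined by the others. First I would flag that the statement as printed sets both $x$ and $x'$ to $\mathtt{lowest}(f(g(\mu)))$, which is a typo: $x$ should denote $\mathtt{lowest}(f(g(\mu)))$ and $x'$ should denote $\mathtt{lowest}(f(g(\lambda)))$.

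Next, I would invoke the conversion stated at the start of Section~4: for $[a_0,\ldots,a_\ell]$ in position coordinates the step coordinates satisfy $x_i = m + a_i - a_{i+1}$. Writing $\mathtt{lowest}(f(g(\lambda))) = [0,a_1,\ldots,a_\ell]$ as in Theorem~\ref{project}, the associated step coordinates become $(m-a_1,\; m+a_1-a_2,\; \ldots,\; m+a_{\ell-1}-a_\ell,\; -)$. By the ``in particular'' clause of Theorem~\ref{project}, $a_1=m-p_0$, so the leading entry is exactly $p_0$.

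Applying the same conversion to $\mathtt{lowest}(f(g(\mu)))$: Theorem~\ref{project} supplies its position coordinates as $[0,\; a_2-(m-p_0),\; \ldots,\; a_\ell-(m-p_0)]$, a length-$\ell$ sequence as required for an $(\ell-1,m)$-Dyck path (the leading $0$ reappearing since $a_1-(m-p_0)=0$). Because subtracting the common constant $m-p_0$ leaves consecutive differences unchanged, the step coordinates read $(m+a_1-a_2,\; m+a_2-a_3,\; \ldots,\; m+a_{\ell-1}-a_\ell,\; -)$.

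Comparing the two sequences, the step coordinates $(x_1,\ldots,x_\ell)$ of $\mathtt{lowest}(f(g(\mu)))$ coincide with the entries in positions $1,\ldots,\ell-1$ of $\mathtt{lowest}(f(g(\lambda)))$ (with a final entry determined by the rest), yielding $x'=(p_0,x_1,\ldots,x_{\ell-1},-)$ as required. The whole argument is a notational unfolding, so no genuine obstacle is expected; the only real care needed is in the index bookkeeping between the $(\ell-1,m)$- and $(\ell,m)$-cases, which is why I would present the two position-to-step conversions side by side.
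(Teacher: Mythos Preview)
Your proposal is correct and matches the paper's own proof, which consists of the single sentence ``Translate Theorem 1 into step coordinates.'' You have simply spelled out that translation in detail (including catching the evident typo in the statement), so the approaches are identical.
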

\begin{proof}
Translate Theorem 1 into step coordinates.  
\end{proof}

\begin{corollary}\label{cortakeoff}
Suppose $x=(x_0,x_1,\ldots,x_{\ell-1},x_{\ell}) \in \mathbf{N}_{\ell,m}^d$ where $d < (\ell-2)m$.  Then $x'=(x_1,\ldots,x_{\ell-1},-) \in \mathbf{N}_{\ell-1,m}^{d-x_0(\ell-1)}$
\end{corollary}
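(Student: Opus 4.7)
The plan is to derive this corollary as a direct translation of Claim \ref{takeoff} from position coordinates to step coordinates, mirroring the pattern already established by Corollaries \ref{corfitsout}, \ref{corfitsin}, and \ref{bound3}.

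First I would recall the conversion between the two coordinate systems. If $x=(x_0,x_1,\ldots,x_{\ell})$ in step coordinates corresponds to $[a_0,a_1,\ldots,a_{\ell}]$ in position coordinates, then $a_0=0$ and $a_i=(m-x_0)+\cdots+(m-x_{i-1})$ for $i>0$. In particular $a_1 = m - x_0$, so $m - a_1 = x_0$, which immediately identifies the shift $(m-a_1)(\ell-1)$ appearing in Claim \ref{takeoff} with the quantity $x_0(\ell-1)$ appearing in the statement to be proved. Also the hypothesis $d < (\ell-2)m$ is identical in both formulations, as is membership in $\mathbf{N}_{\ell,m}^d$, which is a coordinate-free notion.

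Next I would verify that the object $x'=[0,a_2-a_1,\ldots,a_{\ell}-a_1]$ produced by Claim \ref{takeoff} corresponds, in step coordinates, to $(x_1,\ldots,x_{\ell-1},-)$. Writing $a_i' = a_{i+1}-a_1$ for $i \geq 1$ (and $a_0'=0$), the step coordinates of $x'$ are $x_i' = m + a_i' - a_{i+1}' = m + a_{i+1} - a_{i+2} = x_{i+1}$, which is precisely the desired truncation. Once this identification is made, the conclusion of Claim \ref{takeoff} that $x' \in \mathbf{N}_{\ell-1,m}^{d-(m-a_1)(\ell-1)}$ becomes exactly $x' \in \mathbf{N}_{\ell-1,m}^{d-x_0(\ell-1)}$, as required.

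There is no substantive obstacle here: the only work is bookkeeping between the two coordinate systems, and both the hypothesis and conclusion translate cleanly term by term. A one-line proof of the form ``translate Claim \ref{takeoff} from position coordinates to step coordinates'' is sufficient, exactly as with the preceding corollaries.
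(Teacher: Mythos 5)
Your proposal is correct and matches the paper's proof, which simply says ``translate Claim \ref{takeoff} into step coordinates''; your explicit bookkeeping ($a_1=m-x_0$, hence $(m-a_1)(\ell-1)=x_0(\ell-1)$, and $x_i'=x_{i+1}$ after subtracting $a_1$) is exactly the verification that translation entails.
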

\begin{proof}
Translate claim \ref{takeoff} into step coordinates.  
\end{proof}

\begin{corollary}\label{coraddon}
Suppose that $x=(x_1,\ldots,x_{\ell-1},x_{\ell}) \in \mathbf{N}_{\ell-1,m}^d$ and suppose $x_0$ is such that $x_0(\ell-1)+d < (\ell-2)m$.  Then $x'=(x_0,x_1,\ldots,x_{\ell-1},-)  \in  \mathbf{N}_{\ell,m}^{d+x_0(\ell-1)}$ 
\end{corollary}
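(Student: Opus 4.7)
The plan is to derive Corollary \ref{coraddon} from Claim \ref{addon} via the coordinate dictionary, exactly as Corollary \ref{cortakeoff} is derived from Claim \ref{takeoff}. The key observation is that the parameter $a_1$ prepended in Claim \ref{addon} corresponds to $m - x_0$ under the conversion formulas: ``prepending $a_1$ to a position-coordinate sequence'' is precisely the same operation as ``prepending $x_0 = m - a_1$ to a step-coordinate sequence.''

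Concretely, I would start from $x = (x_1, \ldots, x_{\ell-1}, x_\ell) \in \mathbf{N}_{\ell-1,m}^d$ in step coordinates and compute its position-coordinate form $[0, a_2, \ldots, a_\ell]$ via $a_j = \sum_{i=1}^{j-1}(m - x_i)$. Setting $a_1 := m - x_0$, the step-coordinate hypotheses $x_0 \in [0, m]$ and $x_0(\ell-1) + d < (\ell-2)m$ translate on the nose into the position-coordinate hypotheses $a_1 \in [0,m]$ and $(m - a_1)(\ell-1) + d < (\ell-2)m$ required by Claim \ref{addon}.

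Applying Claim \ref{addon} then produces $x'_{\text{pos}} := [0, a_1, a_2 + a_1, \ldots, a_\ell + a_1] \in \mathbf{N}_{\ell,m}^{d + (m - a_1)(\ell-1)} = \mathbf{N}_{\ell,m}^{d + x_0(\ell-1)}$. The final step is to convert $x'_{\text{pos}}$ back to step coordinates using $(x')_k = m + (x'_{\text{pos}})_k - (x'_{\text{pos}})_{k+1}$. A direct computation gives $(x')_0 = m - a_1 = x_0$, $(x')_1 = m + a_1 - (a_2 + a_1) = m - a_2 = x_1$, and more generally $(x')_k = m + a_k - a_{k+1} = x_k$ for $k \geq 2$, so $x' = (x_0, x_1, \ldots, x_{\ell-1}, -)$ as claimed. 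There is no real obstacle here: all of the combinatorial content is already encapsulated in Claim \ref{addon}, and the argument reduces to pure bookkeeping on the coordinate conversion.
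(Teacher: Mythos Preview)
Your proposal is correct and is exactly the approach the paper takes: the paper's entire proof is the single line ``Translate claim \ref{addon} into step coordinates,'' and you have simply spelled out that translation. The only minor remark is that the corollary does not explicitly assume $x_0\in[0,m]$; however, $x_0\ge 0$ is implicit for step coordinates and that is all you need to get $a_1=m-x_0\le m$, which is the sole constraint Claim~\ref{addon} imposes on $a_1$.
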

\begin{proof}
Translate claim \ref{addon} into step coordinates.  
\end{proof}

\begin{claim}\label{ext}
Fix $m$ and $\ell^*$ and some $d^*<(\ell^*-1)m$.  If for all $d \leq d^*$ we have $\mathtt{extendable}(\ell^*,m,d)=\mathtt{true}$ then for any $\ell>\ell^*$ we have  $\mathtt{extendable}(\ell,m,d)=\mathtt{true}$ for all $d \leq d^*$.
\end{claim}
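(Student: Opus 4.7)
I will proceed by induction on $\ell\geq\ell^*$, with base case $\ell=\ell^*$ supplied by hypothesis. For the inductive step, assume $\mathtt{extendable}(\ell-1,m,d)=\mathtt{true}$ for every $d\leq d^*$, and fix some $d\leq d^*$. Partitions $\lambda=[p_0,p_1,\ldots,p_{\ell-2}]\in\mathbf{P}_{\ell-1}^d$ are in natural bijection with pairs $(p_0,\mu)$, where $p_0\geq 0$ counts the parts of size $\ell-1$ in $\lambda$ and $\mu=[p_1,\ldots,p_{\ell-2}]\in\mathbf{P}_{\ell-2}^{d-p_0(\ell-1)}$ records the remaining parts. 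By the inductive hypothesis, each such $\mu$ carries an extension $\mathtt{ext}(\mu)=(w_1,\ldots,w_e)$ whose elements disjointly cover $\mathbf{N}_{\ell-1,m}^{d-p_0(\ell-1)}$.

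The key construction is to define $\mathtt{ext}(\lambda)$ by prepending $p_0$ as the zeroth step coordinate to each $w_i$:
\[
\mathtt{ext}(\lambda)\;=\;\bigl((p_0,w_1,-),\ldots,(p_0,w_e,-)\bigr).
\]
Since $\ell>\ell^*$ gives $(\ell-2)m\geq(\ell^*-1)m>d^*\geq d$, Corollary~\ref{coraddon} (applied with $x_0=p_0$) places each $(p_0,w_i,-)$ inside $\mathbf{N}_{\ell,m}^d$. A direct computation with $M=m\binom{\ell+1}{2}$ and $M'=m\binom{\ell}{2}$ yields $\mathrm{area}(p_0,w_i,-)=\ell(m-p_0)+\mathrm{area}(w_i)$, so the areas in $\mathtt{ext}(\lambda)$ are consecutive, and Corollary~\ref{corproject} identifies the last element of $\mathtt{string}(\lambda)$ as $(p_0,v_s,-)$ where $v_s$ is the last element of $\mathtt{string}(\mu)$, so the areas join smoothly across $\mathtt{string}(\lambda)$ and $\mathtt{ext}(\lambda)$. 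Using $|\lambda|=|\mu|+p_0(\ell-1)$ and $h(\lambda)=h(\mu)+p_0$ one checks $M-|\lambda|-h(\lambda)=\ell(m-p_0)+M'-|\mu|-h(\mu)$, so the terminal area is exactly what the definition of $\mathtt{extendable}$ demands.

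What remains is to verify $\bigsqcup_{\lambda\in\mathbf{P}_{\ell-1}^d}\mathtt{ext}(\lambda)=\mathbf{N}_{\ell,m}^d$. Disjointness follows from the bijection $\lambda\leftrightarrow(p_0,\mu)$: different $p_0$ values give different first step coordinates, while equal $p_0$ reduces disjointness to the inductive hypothesis on $\mathtt{ext}(\mu)$. For surjectivity, any $x=(x_0,x_1,\ldots,x_{\ell-1},-)\in\mathbf{N}_{\ell,m}^d$ satisfies $(x_1,\ldots,x_{\ell-1},-)\in\mathbf{N}_{\ell-1,m}^{d-x_0(\ell-1)}$ by Corollary~\ref{cortakeoff} (whose degree hypothesis is verified exactly as above); by induction this image lies in a unique $\mathtt{ext}(\mu)$, so $x$ lies in $\mathtt{ext}(\lambda)$ for the unique $\lambda$ obtained by prepending $x_0$ parts of size $\ell-1$ to $\mu$.

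The obstacle here is not conceptual but arithmetic: the substantive content has already been packaged into Theorem~\ref{project} and Corollaries~\ref{cortakeoff}--\ref{coraddon}, so the work in this proof reduces to tracking the area shift $\ell(m-p_0)$, the identity $|\lambda|+h(\lambda)=|\mu|+h(\mu)+p_0\ell$, and the degree bound $d<(\ell-2)m$ that makes the corollaries applicable. The main thing to watch is that the inductive hypothesis is invoked at degree $d-p_0(\ell-1)\leq d\leq d^*$, which is within the range for which extendability is assumed.
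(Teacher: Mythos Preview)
Your proposal is correct and follows essentially the same approach as the paper's proof: both induct from $\ell^*$ upward, use the bijection $\lambda\leftrightarrow(p_0,\mu)$ to define $\mathtt{ext}(\lambda)$ by prepending $p_0$ to each element of $\mathtt{ext}(\mu)$, invoke Corollary~\ref{corproject} to align the endpoints of $\mathtt{string}(\lambda)$ and $\mathtt{string}(\mu)$, apply Corollary~\ref{coraddon} (with the bound $d<(\ell-2)m$) to place the prepended elements in $\mathbf{N}_{\ell,m}^d$, and use Corollary~\ref{cortakeoff} for surjectivity. Your area bookkeeping and the identity $|\lambda|+h(\lambda)=|\mu|+h(\mu)+p_0\ell$ match the paper's computation exactly.
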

\begin{proof}

It suffices to assume that $\ell=\ell^*+1$. Suppose we have already constructed $\mathtt{ext}(\mu)$ for all $\mu \in \mathbf{P}_{\ell^*-1}^{\leq d^*}$.    Now fix $d \leq d^*$ and suppose that $\lambda=[p_0,\ldots,p_{\ell-2}] \in \mathbf{P}_{\ell-1}^d$ and let $\mu=[p_1,\ldots,p_{\ell-2}] \in \mathbf{P}_{\ell^*-1}^{d-p_0(\ell-1)}$ .  

Write  $x=\mathtt{lowest}(f(g((\mu)))$ and   $x'=\mathtt{lowest}(f(g(\lambda)))$.  It follows from corollary \ref{corproject} that if $x=(x_1,\ldots,x_{\ell-1},x_{\ell})$ we have $x'=(p_0,x_1,\ldots,x_{\ell-1},-)$.

  If $\mathtt{ext}(\mu)=\{w_{1},\ldots,w_e\}$ then define $\mathtt{ext}(\lambda)=\{w_{1}',\ldots,w_e'\}$  where for $1 \leq i \leq e$ we have that if $w_i=(y_1,\ldots,y_{\ell-1},y_{\ell})$ then  $w_i'=(p_0,y_1,\ldots,y_{\ell-1},-)$.  Note the following:

\begin{enumerate}
\item We have $area(w_{1}')- area(x')= area(w_{1} )- area(x)=1$
\item We have $area(w_{i+1}')-area(w_i')=area(w_{i+1})-area(w_i)=1$
\item We have \begin{eqnarray*}area(w_e')=area(w_e)+(m-p_0) \ell=\\
m(\ell^*+1)\ell^*/2-|\mu|-h(\mu) +(m-p_0) \ell=  \\
m\ell(\ell-1)/2-(|\lambda|-p_0(\ell-1))-(h(\lambda)-p_0) +(m-p_0) \ell=  \\
m\ell(\ell-1)/2-|\lambda|-h(\lambda)+p_0 \ell +(m-p_0) \ell=  \\
m(\ell+1)(\ell)/2-|\lambda|-h(\lambda)\end{eqnarray*}.
\item $w_i' \in  \mathbf{N}_{\ell,m}^{d}$.
\end{enumerate}
The last of these follows from corollary \ref{coraddon} since $p_0(\ell-1)+degr(w_i)= p_0(\ell-1) + |\mu| =|\lambda|<m(\ell-2)$.  Therefore  taking $\mathtt{ext}(\lambda)=(w_1',\ldots,w_e')$ gives a valid extension.  It remains to show:

\begin{eqnarray*}
\mathbf{N}_{\ell,m}^d=\bigcup_{\lambda \in P_{\ell-1}^d} \mathtt{ext}(\lambda)
\end{eqnarray*}
where the union is disjoint.

Now let $y'=(y_0,y_1,\ldots,y_{\ell-1},y_{\ell})  \in  \mathbf{N}_{\ell,m}^d$ be arbitrary. We must show $y'$ appears in the union above. Claim \ref{takeoff} implies that the element $y=(y_1,\ldots,y_{\ell-1},-) \in  \mathbf{N}_{(\ell-1),m}^{d-y_0(\ell-1)}$.  Therefore $y \in \mathtt{ext}(\mu)$ for some $\mu \in \mathbf{P}_{\ell-2}^{d-y_0(\ell-1)}$ since we are assuming the extensions for all such $\mu$ have already been (validly) constructed.  It follows by the construction in the proof that if $\mu=[p_1,\ldots,p_{\ell-1}]$ then $y' \in \mathtt{ext}(\lambda)$ where $\lambda=[y_0,p_1,\ldots,p_{\ell-1}]$.  Clearly $\lambda \in \mathbf{P}_{\ell-1}^d$  so $y'$ appears in the union above.

Finally we must show the union is disjoint.  Suppose that $\lambda \neq  \rho \in \mathbf{P}_{\ell-1}^d$ and $y' \in \mathtt{ext}(\lambda)$, and $y' \in \mathtt{ext}(\rho)$. If  $y'=(y_0,y_1,\ldots,y_{\ell-1},y_{\ell}) $ and $\lambda=[p_0,\ldots,p_{\ell-1}]$ and $\rho=[q_0,\ldots,q_{\ell-1}]$ we must have $p_0=y_0=q_0$ by construction.  Moreover we have $y=(y_1,\ldots,y_{\ell-1},-) \in \mathtt{ext}([p_1,\ldots,p_{\ell-1}])$ and also $y \in \mathtt{ext}([q_1,\ldots,q_{\ell-1}])$ which implies $p_i=q_i$ for $1 \leq i \leq \ell-1$ since otherwise we would have two distinct partitions in $\mathbf{P}_{\ell-2}^{d-y_0(\ell-1)}$ whose extensions intersect.  

\end{proof}

\section{The case of $\ell=2$}

We now specialize to the case that $\ell=2$.  We begin with the following claim:

\begin{claim}\label{maxarea}
Suppose that $x \in \mathbf{D}_{2,m}$.  Then $area(x)+2degr(x) \leq 3m$.  Moreover, for any $d \in [0,m]$ there is some $x \in \mathbf{D}_{2,m}$ with $degr(x)=d$ and $area(x)+2degr(x) = 3m$.
\end{claim}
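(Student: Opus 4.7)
The plan is to reduce the claim to an elementary case analysis, exploiting the fact that $\ell=2$ collapses most of the definitions. Writing $x=(x_0,x_1,x_2)\in\mathbf{D}_{2,m}$, the constraints become $x_0\leq m$, $x_0+x_1\leq 2m$, and $x_2=3m-x_0-x_1$, and the area formula reduces to $area(x)=3m-2x_0-x_1$ (here $M=m\binom{3}{2}=3m$). The degree sum collapses to the single index $i=j=1$, so $degr(x)=\delta_{11}^+(x)+\delta_{11}^-(x)$ where $\delta_{11}^+(x)=\min(x_1,\max(0,x_1-m-1))$ and $\delta_{11}^-(x)=\min(x_0,\max(0,m-x_1))$.

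The next step is to split into cases based on the value of $x_1$. If $x_1\leq m$, then $\delta_{11}^+(x)=0$ and $\delta_{11}^-(x)=\min(x_0,m-x_1)$, and I would compute $area(x)+2\,degr(x)$ in each of the two subcases $x_0\leq m-x_1$ (obtaining $3m-x_1$) and $x_0\geq m-x_1$ (obtaining $5m-2x_0-3x_1$, which is $\leq 3m$ iff $2(x_0+x_1)+x_1\geq 2m$, verified by $x_0+x_1\geq m$). If $x_1=m+1$ both deltas vanish and $area(x)+2\,degr(x)=2m-2x_0-1<3m$. Finally, if $x_1\geq m+2$, then $\delta_{11}^-(x)=0$ and $\delta_{11}^+(x)=x_1-m-1$, giving $area(x)+2\,degr(x)=m+x_1-2x_0-2$, bounded by $3m-2$ using $x_1\leq 2m$ and $x_0\geq 0$. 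Collecting the four cases proves $area(x)+2\,degr(x)\leq 3m$.

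For the ``moreover'' statement, I would simply exhibit for each $d\in[0,m]$ the path $x=(d,0,3m-d)$. This is a valid $(2,m)$-Dyck path since $d\leq m$, has $x_1=0\leq m$ with $x_0=d\leq m-0$, so $degr(x)=\min(d,m)=d$, and $area(x)=3m-2d$, yielding $area(x)+2\,degr(x)=3m$ as required. This realizes every value $d\in[0,m]$ simultaneously with the bound being saturated. I do not anticipate any real obstacle; the whole argument is a bookkeeping verification once the case split on $x_1$ has been made.
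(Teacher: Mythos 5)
Your proposal is correct and is essentially the paper's own argument: the paper also reduces $degr$ to the single term for $(i,j)=(1,1)$ (written in position coordinates $[0,a_1,a_2]$ via $\alpha$ rather than in step coordinates via $\delta_{11}^{\pm}$) and performs the same short case analysis, and your witness $(d,0,3m-d)$ is exactly the paper's path $[0,m-d,2m-d]$ translated to step coordinates. The only differences are the choice of coordinates and the slightly finer case split on $x_1$, which are cosmetic.
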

\begin{proof}
To show the first half, suppose that $x=[0,a_1,a_2]$.  We have that:
\begin{eqnarray*}
degr(x)=\alpha(a_1,a_2)-\alpha_0(a_2)\\
=max(a_2-a_1,a_1-a_2-1)-max(0,a_2-m)\\
\end{eqnarray*}
Thus if :
\begin{itemize}
\item $a_2 \geq m$ then $degr(x)=m-a_1$ so $area(x)+2degr(x)=a_2-a_1+2m \leq 3m$ since  $a_1-a_2 \geq -m$.
\item $a_2 < m$ and $a_2 \geq a_1$ then  $degr(x)=a_2-a_1-(0)$ so that $area(x)+2degr(x)=3a_2-a_1 < 3m$ since $a_2 < m$.
\item $a_2<a_1$ then $degr(x)=a_1-a_2-1-(0)$ since $a_2 <a_1 \leq m$ so that $area(x)+2degr(x)=3a_1-a_2-2 < 3m$ since $a_1 \leq m$.
\end{itemize}
To see the second part, simply take $x=[0,m-d,2m-d]$.
\end{proof}

\begin{claim}\label{equalstring}
If $d<m$ then $\mathbf{D}_{2,m}^d= \mathtt{string}([d])$ where $[d]$ is the partition composed of $d$ parts of size $1$ (i.e., the unique element of $\mathbf{P}_{1}^d$).
\end{claim}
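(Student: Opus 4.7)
The plan is to invoke claim \ref{connected} in the trivial case $\ell = 2$, once two preliminary observations are in place.

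First I would note that $\mathbf{P}_1^d = \{[d]\}$ is a singleton: every partition whose longest part is at most $1$ consists entirely of parts of size $1$, so requiring total size $d$ forces exactly $d$ parts. Consequently, for $\ell = 2$ the disjoint union on the right side of claim \ref{connected} collapses to the single term $\mathtt{string}([d])$.

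Second, I would verify that every $x = [0, a_1, a_2] \in \mathbf{D}_{2,m}$ is leftable. The defining inequalities of $\mathbf{D}_{2,m}$ translate to $0 \le a_1 \le m$ and $0 \le a_2 \le a_1 + m$. Since $\ell = 2$, the candidate values for $pair(x)$ are $r = 0$ and $r = 1$ (using the convention $a_3 = 0$). If $a_2 \le m$ then $a_0 - a_2 = -a_2 \ge -m$, so $pair(x) = 0$, and leftability at $r = 0$ reduces to $a_1 - a_2 \le m + 1$, which holds because $a_1 \le m$ and $a_2 \ge 0$. If instead $a_2 > m$, then minimality forces $pair(x) = 1$, since $a_1 - a_3 = a_1 \ge 0 \ge -m$; leftability at $r = 1$ reduces to $a_2 - a_2 = 0 \le m + 1$, which is trivial.

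With universal leftability established, no iterate of $\mathtt{left}$ applied to an $x \in \mathbf{D}_{2,m}^d$ can ever be simultaneously in $\mathbf{D}_{2,m}$ and unleftable, so every such $x$ is connected and $\mathbf{C}_{2,m}^d = \mathbf{D}_{2,m}^d$. Since the hypothesis $d < m$ is exactly the bound $d < (\ell - 1)m$ required by claim \ref{connected}, that claim applies and yields
\begin{eqnarray*}
\mathbf{D}_{2,m}^d \;=\; \mathbf{C}_{2,m}^d \;=\; \bigsqcup_{\lambda \in \mathbf{P}_1^d} \mathtt{string}(\lambda) \;=\; \mathtt{string}([d]),
\end{eqnarray*}
as desired. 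The only genuine check is the universal leftability statement; everything else is just assembling named results, so I do not expect any significant obstacle.
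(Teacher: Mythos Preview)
Your proposal is correct and follows essentially the same approach as the paper's own proof: both establish that every element of $\mathbf{D}_{2,m}$ is leftable by the same two-case analysis on $pair(x)\in\{0,1\}$, conclude $\mathbf{C}_{2,m}^d=\mathbf{D}_{2,m}^d$, and then apply claim~\ref{connected} together with the observation that $\mathbf{P}_1^d=\{[d]\}$. Your case split on $a_2\le m$ versus $a_2>m$ is just a slightly more explicit way of determining $pair(x)$ than the paper's direct ``if $pair(y)=0$ / if $pair(y)=1$'' phrasing, but the content is identical.
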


\begin{proof}
First we show that $\mathbf{C}_{2,m}^d=\mathbf{D}_{2,m}^d$.  If this were not true there would have to be some disconnected element $x \in \mathbf{D}_{2,m}^d$ and therefore some unleftable element $y=[0,a_1,a_2] \in \mathbf{D}_{2,m}^d$.  If $pair(y)=0$ this would imply that $a_1-a_2>m+1$ which is impossible as $a_1 \leq m$.   If $pair(y)=1$ this would imply that $a_2-a_2>m+1$ which is also impossible.  

Now, since $d<m=(\ell-1)m$ we may apply claim \ref{connected} to see
\begin{eqnarray*}
\mathbf{D}_{2,m}^d=\mathbf{C}_{2,m}^d=\bigcup_{\lambda \in \mathbf{P}_{1}^d} \mathtt{string}(\lambda)
\end{eqnarray*}
but $[d]$ is the only element in $\mathbf{P}_{1}^d$ so the claim follows.

\end{proof}

\begin{claim}\label{stringsym}
Writing $M=3m$ we have
\end{claim}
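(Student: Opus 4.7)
The plan is to show that the monomial generating function of $\mathtt{string}([d])$ (which by Claim \ref{equalstring} equals all of $\mathbf{D}_{2,m}^d$) is precisely $[d,M-2d]_{q,t}$. By Claim \ref{equalstring} we already know that the string exhausts the degree-$d$ Dyck paths, so the task reduces to locating its endpoints in the $(area,dinv)$-plane.

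First I would pin down the initial element $v_0=f(g([d]))$. Since $\ell=2$ the partition $[d]$ consists of $d$ parts of size $1$, so every row is an $(\ell-1)$-hook row (because iteratively $h_{i+1}=h_i+\ell-\lambda_{h_i}=h_i+1$). Thus $h([d])=d$, $|[d]|=d$, and $w^{\ell-1}([d])=d$. Applying Claim \ref{f} gives $area(v_0)=w(g([d]))=d$ and $degr(v_0)=|g([d])|=d$, and hence $dinv(v_0)=M-d-area(v_0)=3m-2d$.

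Next, I would use Claim \ref{rightcheck} to track the string: each application of $\mathtt{right}$ increases $area$ by $1$ and preserves $degr$, so if $\mathtt{string}([d])=\{v_0,v_1,\ldots,v_b\}$ then $area(v_i)=d+i$ and $dinv(v_i)=3m-2d-i$ for $i\in[0,b]$. To conclude the claim I must show $b=3m-3d$. The upper bound $area(v_b)\leq 3m-2d$ follows immediately from Claim \ref{maxarea} applied to $v_b$ (since $degr(v_b)=d$). For the lower bound, the ``moreover'' half of Claim \ref{maxarea} exhibits an element $x\in\mathbf{D}_{2,m}$ with $degr(x)=d$ and $area(x)=3m-2d$; by Claim \ref{equalstring} this $x$ lies in $\mathtt{string}([d])$, so $area(v_b)\geq 3m-2d$, forcing equality.

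Combining these, summing $q^{area(v_i)}t^{dinv(v_i)}$ over $i\in[0,3m-3d]$ yields $q^d t^{3m-2d}+q^{d+1}t^{3m-2d-1}+\cdots+q^{3m-2d}t^d=[d,M-2d]_{q,t}$, which is the desired identity. I don't anticipate a real obstacle: the cycle-map bookkeeping is handled by Claim \ref{rightcheck}, the degree/area computation for $v_0$ reduces to counting hook rows of a partition of shape $(1^d)$, and the two inequalities that squeeze $area(v_b)$ to the extremal value $3m-2d$ are direct applications of the two halves of Claim \ref{maxarea} together with Claim \ref{equalstring}. The only mildly delicate point is being careful that the hypothesis $d<m$ (which is implicit for the string decomposition to apply via Claim \ref{equalstring}) is indeed what the claim is asserting for, and that the extremal path produced by Claim \ref{maxarea} (namely $[0,m-d,2m-d]$) genuinely sits in $\mathbf{D}_{2,m}^d$ rather than in some boundary case — but this is immediate from its explicit form.
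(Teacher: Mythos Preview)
Your argument for the case $d<m$ is essentially the paper's own proof: identify $v_0=f(g([d]))$ with $area(v_0)=d$, use Claim~\ref{equalstring} so that $\mathtt{string}([d])=\mathbf{D}_{2,m}^d$, and squeeze $area(v_b)$ to $3m-2d$ using the two halves of Claim~\ref{maxarea}. (The paper is terser---it simply cites $area(f(g([d])))=d$ without the hook-row computation---but the logic is the same.)

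There is, however, a genuine gap. You assume the claim is only asserted for $d<m$, but it is stated (and used) without that restriction; the paper's proof explicitly treats $d\geq m$ as a separate case. Your route through Claim~\ref{equalstring} and Claim~\ref{maxarea} breaks down there: Claim~\ref{equalstring} requires $d<m$, and for $d>m$ the ``string'' $[d,M-2d]_{q,t}$ is empty (since $d>3m-2d$), which you never address. The missing piece is the short direct verification the paper supplies: for $x=[0,a_1,a_2]\in\mathbf{D}_{2,m}^d$ with $d\geq m$ one checks the three sign cases of $degr(x)$ and finds that only $x=[0,0,m]$ survives (with $d=m$, $area=m$), so $\mathbf{D}_{2,m}^m=\{[0,0,m]\}$ and $\mathbf{D}_{2,m}^d=\emptyset$ for $d>m$, matching the right-hand side in those cases.
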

\begin{eqnarray*}
\sum_{x \in \mathbf{D}_{2,m}^d} q^{area(x)}t^{M-d-area(x)}=\\ q^dt^{M-2d}+q^{d+1}t^{M-2d-1}+\cdots+q^{M-2d-1}t^{d+1}+q^{M-2d}t^{d}
\end{eqnarray*}
\begin{proof}

If $d<m$ let $b = \mathtt{lowest}(f(g([d])))$.  By  claim \ref{equalstring} we know that $b$ is the unique element of maximal area in $\mathbf{D}_{2,m}^d$ and by claim \ref{maxarea} that we know that such an element has area  of $M-2d$.  Since $area(f(g([d])))=d$ the claim follows for $d<m$.
On the other hand, suppose $x=[0,a_1,a_2] \in \mathbf{D}_{2,m}^d$ for some $d \geq m$.  If:
\begin{itemize}
\item $a_2 \geq m$ then $degr(x)=m-a_1$ which forces $a_1=0$ which in turn forces $a_2 \leq m$.  Thus we must have $x=[0,0,m]$.
\item $a_2 < m$ and $a_2 \geq a_1$ then  $degr(x)=a_2-a_1-(0)<m$ which is a contradiction.
\item $a_2<a_1$ then $degr(x)=a_1-a_2-1-(0)<m$ which is a contradiction.
\end{itemize}
Therefore $\mathbf{D}_{2,m}^m=\{[0,0,m]\}$ and $\mathbf{D}_{2,m}^d=\emptyset$ for $d>m$ which shows the claim is true for all $d \geq m$ since $area([0,0,m])=m$.  

\end{proof}

\section{The Conjecture and Evidence of its Truth}

We now consider rational Dyck paths $x$ of slope $n/(\ell+1)$ where $\ell+1$ and $n$ are relatively prime (the parameters $(\ell+1,n)$ correspond to the paramters $(s,r)$ appearing in the introduction). They are equivalent to sequences $x=(x_0,x_1,\ldots,x_{\ell})$ such that  $x_0+\ldots+x_i < n(i+1)/(\ell+1)$ for $0 \leq i \leq \ell -1$ and $x_0+\ldots+x_{\ell}=n$.  We define $\mathbf{D}_{n/(\ell+1)}$ to be the set of such sequences.  Note that if $n=(\ell+1)m+1$ for some $m$ then $\mathbf{D}_{n/(\ell+1)}=\mathbf{D}_{\ell,m}$ (more precisely, there is a bijection $\mathbf{D}_{\ell,m} \rightarrow \mathbf{D}_{n/(\ell+1)}$ given by adding $1$ to the last step coordinate).  We make the following definitions:

\begin{eqnarray*}
M=\sum_{i=0}^{\ell-1} \lfloor \frac{n}{\ell+1}(i+1) \rfloor\\
area(x)=M-(x_0(\ell)+x_1(\ell-1)+\cdots+x_{\ell-1}(1)+x_{\ell}(0))\\
\end{eqnarray*}

Note that if $n=(\ell+1)m+1$ for some $m$ then $M=m(\ell)(\ell+1)/2$ and the definition of $area(x)$ coincides with that given earlier.  Next, we define the following:

\begin{eqnarray*}
\beta_{ij}(x)=(x_i+\cdots+x_j)-\frac{n}{\ell+1}(j-i+1)\\
\gamma_{ij}(x)=min(x_{i-1},\lfloor | \beta_{ij}(x) | \rfloor) \text{   if   } \beta_{ij}(x)<0 \\
\gamma_{ij}(x)=min(x_{i},\lfloor | \beta_{ij}(x) | \rfloor) \text{   if   } \beta_{ij}(x)>0\\
degr(x)=\sum_{1 \leq i \leq j < \ell}\gamma_{ij}(x)\\
\end{eqnarray*}
Note that if $n=(\ell+1)m+1$  then $\gamma_{ij}(x)=\delta_{ij}^+(x)+\delta_{ij}^-(x)$ and so the definition of  $degr(x)$ agrees with that given earlier.  Next, we define:

\begin{eqnarray*}
under(x)=\min\limits_{i \in[0,\ell)} \left( \frac{n}{\ell+1}(i+1)-(x_0+\cdots+x_i) \right)\\
\mathbf{T}_{n/(\ell+1)}=\{x: x \in \mathbf{D}_{n/(\ell+1)}\text{,   }under(x)=1/(\ell+1)\}\\
\mathbf{D}_{n/(\ell+1)}^d=\{x: x \in \mathbf{D}_{n/(\ell+1)}\text{,   }degr(x)=d\}\\
\mathbf{T}_{n/(\ell+1)}^d=\{x: x \in \mathbf{T}_{n/(\ell+1)}\text{,   }degr(x)=d\}\\
\end{eqnarray*}

Note that if $n=(\ell+1)m+1$ and $x=(x_0,x_1,\ldots,x_{\ell}) \in  \mathbf{D}_{n/(\ell+1)}= \mathbf{D}_{\ell,m}$ then $under(x)=1/(\ell+1)$ is equivalent to the statement that $x_0=m$ which is equivalent to the statement that $x \in \mathbf{T}_{\ell,m}$.  Thus $\mathbf{T}_{n/(\ell+1)}=\mathbf{T}_{\ell,m}$.  Similarly, $\mathbf{T}_{n/(\ell+1)}^d=\mathbf{T}_{\ell,m}^d$.

\greybox{
\begin{conjecture}\label{con}
Define the degree $M-d$ part of the rational $q,t$-Catalan polynomial of slope $n/(\ell+1)$ to be:
\begin{eqnarray*}
\mathcal{C}_{n/(\ell+1)}^d:= \sum_{x \in \mathbf{D}_{n/(\ell+1)}^d} q^{area(x)}t^{M-d-area(x)}
\end{eqnarray*}
Further, let us define:
\begin{eqnarray*}
sym(a,b)= \frac{q^{b+1} t^a-q^a t^{b+1}}{q-t}
\end{eqnarray*}
Then we have that:
\begin{eqnarray*}
\mathcal{C}_{n/(\ell+1)}^d= \sum_{x \in \mathbf{T}_{n/(\ell+1)}^d}sym\left(area(x),M-d-area(x)\right)
\end{eqnarray*}
\end{conjecture}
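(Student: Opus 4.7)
The plan is to specialize throughout to the case $n = (\ell+1)m+1$, so that $\mathbf{D}_{n/(\ell+1)}$ is in bijection with $\mathbf{D}_{\ell,m}$ (as explained at the start of Section 2) and the definitions of $degr$ and $area$ of Section 2 agree with those of Section 9. I would then combine the extended-string decomposition developed in Sections 4--7 with the $0$-$1$ matrix involution of Claim \ref{i}, using the $\ell = 2$ analysis of Section 8 as the seed.

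The first ingredient is to upgrade $\mathtt{extendable}(\ell, m, d)$ to $\mathtt{true}$ for all $\ell$ and all $d \leq d^*$. By Claim \ref{ext}, it suffices to verify this at a single base $\ell^*$, chosen large enough relative to $d^*$ that the hypotheses of our structural claims apply throughout; this finite verification is exactly $\beta(d^*)$. Once it is in hand, the extended strings $\mathtt{string}(\lambda) \cup \mathtt{ext}(\lambda)$ indexed by $\lambda \in \mathbf{P}_{\ell-1}^d$ partition $\mathbf{D}_{\ell,m}^d$, and each consists of Dyck paths whose $area$ values run consecutively from $area(f(g(\lambda))) = w^{\ell-1}(\lambda)$ (by Claims \ref{g} and \ref{f}) up to $M - d - h(\lambda)$ (by the definition of $\mathtt{extendable}$). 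Since every $x \in \mathbf{D}_{\ell,m}^d$ satisfies $M - d - area(x) = dinv(x)$, this yields
\begin{equation*}
\mathcal{C}_{n/(\ell+1)}^d \;=\; \sum_{\lambda \in \mathbf{P}_{\ell-1}^d} \;\sum_{a=w^{\ell-1}(\lambda)}^{M-d-h(\lambda)} q^a\, t^{\,M-d-a}.
\end{equation*}

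The second ingredient is to transport the involution $\iota$ of Claim \ref{i} via the bijection $g$ of Claim \ref{g} to an involution $\sigma$ on $\mathbf{P}_{\ell-1}^d$ satisfying $h(\sigma(\lambda)) = w^{\ell-1}(\lambda)$ and $w^{\ell-1}(\sigma(\lambda)) = h(\lambda)$. The crucial algebraic identity is $sym(a, b) + sym(b+1, a-1) = 0$, which is immediate from the definition of $sym$. Writing $h = h(\lambda)$ and $w = w^{\ell-1}(\lambda)$ and pairing $\lambda$ with $\sigma(\lambda)$ in the displayed sum, a case analysis based on the relative positions of $h$, $w$, and $(M-d)/2$ shows that the combined monomial contribution of the pair equals $sym(w, M-d-w) + sym(h, M-d-h)$; fixed points of $\sigma$ automatically produce a symmetric string equal to $sym(h, M-d-h)$. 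Since $\lambda \mapsto f(g(\lambda))$ identifies $\mathbf{P}_{\ell-1}^d$ with $\mathbf{T}_{\ell,m}^d$ and takes $w^{\ell-1}(\lambda)$ to $area(f(g(\lambda)))$, summing over $\sigma$-orbits produces the desired formula.

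The main obstacle I anticipate lies in the case analysis of the second ingredient, especially the \emph{lower} orbits where $w > (M-d)/2$. In that regime the two intervals $[w, M-d-h]$ and $[h, M-d-w]$ are disjoint rather than nested, and the matching of their union with $sym(w, M-d-w) + sym(h, M-d-h)$ depends essentially on the negative-term interpretation of $sym(a,b)$ for $a > b$ via the identity above. A secondary but largely computational obstacle is completing $\beta(d^*)$ itself: for $d^* = 20$ the code in Appendix A must actually be executed, and one must confirm that the instructions $I(d^*)$ instantiate all required extensions in a manner consistent with the propagation of Claim \ref{ext}.
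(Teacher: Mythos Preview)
Your proposal is essentially the paper's own argument (Theorem 2): establish $\mathtt{extendable}(\ell^*,m,d)$ from the finite check (Lemmas \ref{1} and \ref{3}), propagate via Claim \ref{ext}, and then pass between the $str$-decomposition and the $sym$-formula using the involution $g^{-1}\circ\iota\circ g$ together with $f\circ g:\mathbf{P}_{\ell-1}^d\to\mathbf{T}_{\ell,m}^d$.

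The one point worth flagging is that the obstacle you anticipate does not exist. Since $d<(\ell-1)m$ forces $3d<3(\ell-1)m\leq \tfrac{(\ell+1)\ell}{2}m=M$, you get $2h(\lambda)\leq 2|\lambda|<M-|\lambda|$ and $2w^{\ell-1}(\lambda)\leq 2|\lambda|<M-|\lambda|$ (this is Remark \ref{mer}). Hence every interval $[w^{\ell-1}(\lambda),\,M-d-h(\lambda)]$ straddles $(M-d)/2$, so there are no ``lower'' orbits, and the permutation identity of Remark \ref{rem} replaces $M-d-h(\lambda)$ by $M-d-w^{\ell-1}(\sigma(\lambda))$ uniformly with no case analysis. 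Your use of $sym(a,b)+sym(b+1,a-1)=0$ is correct but unnecessary here; the paper simply never leaves the regime $a<b$.
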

}

\begin{remark}
This function first appears as definition 21 in section 7 of \cite{LW09}. More precisely, we have:
\begin{eqnarray*}
\sum_{d \geq 0} \mathcal{C}_{n/(\ell+1)}^d=C_{n,\ell+1,1}(q, t)
\end{eqnarray*}
where the left hand side is our notation and the right hand side is the notation of \cite{LW09}.  More precisely our statistic $area$ corresponds to their statistic $\text{area}^c$ and their $h^+_{r/s}$ (sometimes referred to as $dinv$) is equal to the value of our $M-degr-area$.  
\end{remark}

\begin{remark}
In the case that $n=(\ell+1)m+1$ we write $\mathcal{C}_{n/(\ell+1)}^d=\mathcal{C}_{\ell,m}^d$.  
\end{remark}

\begin{lemma}\label{1}
Set $d^*=20$ and fix $m \geq 1$.  Let $\ell^*$ be minimal such that $m(\ell^*-1)>d^*$.  Then for all $d \leq d^*$ and all $\lambda \in \mathbf{P}_{\ell^*-1}^{d}$ we have:
\begin{eqnarray*}
area(\mathtt{lowest}(f \circ g(\lambda))) \geq \frac{(\ell^*+1)(\ell^*)}{2}m-|\lambda|-h(\lambda)
\end{eqnarray*}
where $\mathtt{lowest}$ is defined with respect to the choice of $m$.
\end{lemma}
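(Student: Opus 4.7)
My plan is to proceed by induction on $\ell$, establishing the analogous bound for every $\ell \in \{2, 3, \ldots, \ell^*\}$ and every $\lambda \in \mathbf{P}_{\ell-1}^d$ with $d \leq d^*$; specializing to $\ell = \ell^*$ will then yield the lemma.

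For the base case $\ell = 2$, I would appeal to Claim \ref{stringsym}. For $\lambda = [d] \in \mathbf{P}_1^d$ with $d < m$ (which holds whenever $\ell^* = 2$, since then $m \geq 21 > 20 \geq d$, and more generally whenever the base case is invoked within the induction) one has $|\lambda| = h(\lambda) = d$, and the claim gives $area(\mathtt{lowest}(f \circ g(\lambda))) = 3m - 2d = m\binom{3}{2} - |\lambda| - h(\lambda)$, so equality (and in particular the required inequality) holds.

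For the inductive step from $\ell - 1$ to $\ell$, I would write $\lambda = [p_0, p_1, \ldots, p_{\ell - 2}]$ and set $\mu = [p_1, \ldots, p_{\ell - 2}] \in \mathbf{P}_{\ell - 2}^{d - p_0(\ell - 1)}$. When $|\lambda| < (\ell - 2)m$, Corollary \ref{corproject} tells us that if $\mathtt{lowest}(f \circ g(\mu)) = (x_1, \ldots, x_{\ell - 1}, x_\ell)$ in step coordinates, then $\mathtt{lowest}(f \circ g(\lambda)) = (p_0, x_1, \ldots, x_{\ell - 1}, -)$. Unpacking the area formula at each level yields $area(\mathtt{lowest}(f \circ g(\lambda))) = area(\mathtt{lowest}(f \circ g(\mu))) + \ell(m - p_0)$. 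Combined with the identities $|\mu| = |\lambda| - p_0(\ell - 1)$, $h(\mu) = h(\lambda) - p_0$, and $m\binom{\ell + 1}{2} = m\binom{\ell}{2} + \ell m$, the inductive hypothesis applied to $\mu$ then telescopes algebraically to the target bound
\[
area(\mathtt{lowest}(f \circ g(\lambda))) \geq m\binom{\ell+1}{2} - |\lambda| - h(\lambda).
\]

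The hard part is the complementary regime $|\lambda| \geq (\ell - 2)m$, in which Corollary \ref{corproject} is unavailable and the telescoping argument breaks. Since $|\lambda| \leq d^* = 20$, this regime forces $(\ell - 2)m \leq 20$, which for each fixed $m$ confines $\ell$ to a finite range and, for each such pair $(\ell, m)$, confines $\lambda$ to a finite subset of $\mathbf{P}_{\ell - 1}$. Consequently, the residual cases form a finite collection whose cardinality depends only on $d^*$ (not on $r$, $s$, or $d$), and I would verify them by direct computation. This exhaustive check is precisely the flavor of finite base case $\beta(d^*)$ alluded to in the introduction.
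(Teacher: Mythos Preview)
Your approach is correct but substantially different from the paper's. The paper does not use induction on $\ell$ at all for this lemma: it simply observes that when $m>20$ one has $\ell^*=2$, so the $\ell=2$ analysis (Claims~\ref{equalstring} and~\ref{stringsym}) already gives equality in the bound, while for $m\in[1,20]$ the triple $(\ell^*,m,d)$ ranges over a finite set and the inequality is verified by the explicit computation in Appendix~\ref{comp}. In particular, the paper does \emph{not} invoke Theorem~\ref{project} or Corollary~\ref{corproject} in proving Lemma~\ref{1}; that machinery is reserved for the \emph{upward} extension $\ell^*\to\ell>\ell^*$ in Claim~\ref{ext} and the main theorem. Your argument runs the projection theorem in the opposite (downward) direction, peeling off the largest part $p_0$ to descend from $\ell$ to $\ell-1$ and telescoping the area identity $area(\mathtt{lowest}(f\circ g(\lambda)))=area(\mathtt{lowest}(f\circ g(\mu)))+\ell(m-p_0)$; this is a valid and rather elegant use of the structural result, but it does not eliminate the need for a finite residual check (your ``complementary regime''), and the resulting check is comparable in scope to the paper's direct one. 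A small point of precision: your inductive hypothesis at level $\ell$ should be restricted to those $\lambda$ with $|\lambda|<(\ell-1)m$ (so that $f\circ g(\lambda)$ is defined); this is harmless since, as you note, the step at level $\ell$ only calls the hypothesis at level $\ell-1$ for $\mu$ with $|\mu|<(\ell-2)m$.
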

\begin{proof}
First suppose that $m>20$.  In this case we have $\ell^*=2$.   According to claim \ref{stringsym} the maximal area of an element of $\mathbf{D}_{2,m}^d$ is $3m-2d$.  According to claim \ref{equalstring} the element of maximum area in $\mathbf{D}_{2,m}^d$ is $\mathtt{lowest}(f \circ g([d])))$ so we conclude this element has area of $3m-2d$.  On the other hand, $3m-|[d]|-h([d])=3m-2d$ as well, so the claim holds.   Therefore it remains only to check the lemma in the case that $m \in [1,20]$.  But in this case the parameters $m$ and $d$ are both bounded and $\ell^*$ is a function of $m$.  Thus there are only a finite number of values of $(\ell^*,m,d)$ to check.  This is done in appendix \ref{comp}.  
\end{proof}

\begin{lemma}\label{3}
Set $d^*=20$ and fix $m \geq 1$.  Let $\ell^*$ be minimal such that $(\ell^*-1)m>d^*$.  Then for all $d \leq d^*$ and $\ell \leq \ell^*$ we have:
\begin{eqnarray}\label{toshow}
\mathcal{C}_{\ell,m}^d= \sum_{x \in \mathbf{T}_{\ell,m}^d}sym\left(area(x),\frac{(\ell+1)\ell}{2}m-d-area(x)\right)
\end{eqnarray}

\end{lemma}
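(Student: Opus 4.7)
The plan is to mirror the two-case structure used in the proof of Lemma \ref{1}.

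First, suppose $m > d^* = 20$. Then the defining inequality $m(\ell^*-1) > d^*$ is already satisfied by $\ell^* - 1 = 1$, so $\ell^* = 2$. The hypothesis $\ell \leq \ell^*$ therefore collapses to $\ell = 2$, and since $d \leq d^* < m$, Claim \ref{stringsym} gives
\[
\mathcal{C}_{2,m}^d = q^d t^{M-2d} + q^{d+1} t^{M-2d-1} + \cdots + q^{M-2d} t^d = sym(d, M-2d),
\]
where $M = 3m$. For the right-hand side of the identity, $\mathbf{P}_1^d$ contains only the partition $[d] = (1^d)$, for which $w^1([d]) = d$ (every row of $[d]$ is a $1$-hook row); hence $\mathbf{T}^d_{2,m}$ reduces to the singleton $\{f(g([d]))\}$ whose unique element has area $d$, contributing $sym(d, 3m - d - d) = sym(d, M - 2d)$. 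The two sides match.

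Second, suppose $m \leq d^* = 20$. Then $\ell^*$ is a specific function of $m$ bounded above by $d^* + 1 = 21$, so $\ell \in [2, \ell^*]$ and $d \in [0, d^*]$ range over finite sets, and only finitely many triples $(\ell, m, d)$ need to be considered. For each such triple, equation \ref{toshow} is a finite polynomial identity that I would verify directly by enumerating $\mathbf{D}^d_{\ell,m}$, computing $area$ and $degr$ on each element, identifying the subset $\mathbf{T}^d_{\ell,m}$, and comparing the two resulting polynomials in $q$ and $t$. This parallels the base-case check in the proof of Lemma \ref{1}, and the computation would be deferred to an appendix alongside the code used there.

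The main obstacle is managing the size of the finite computation in the second case. For the extremal parameters (for example $m=1$, $\ell=21$, $d=20$), the set $\mathbf{D}^d_{\ell,m}$ becomes large, so one needs efficient enumeration together with whatever structural shortcuts are available, such as the projection of Theorem \ref{project} and the addon/takeoff correspondences of Corollaries \ref{coraddon} and \ref{cortakeoff}, to keep the check tractable. This parallels the complexity considerations for Lemma \ref{1}, and no new combinatorial idea beyond what is already in Sections 2--8 should be required.
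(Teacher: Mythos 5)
Your proposal follows the paper's proof essentially verbatim: for $m>20$ you reduce to Claim \ref{stringsym} together with the identification of $\mathbf{T}_{2,m}^d$ as a singleton of area $d$, and for $m\le 20$ you reduce to a finite check of the boundedly many triples $(\ell,m,d)$ deferred to the appendix, which is exactly the paper's ``computation 2''. The only slips are minor: when $m>20$ the hypothesis $\ell\le\ell^*$ does not collapse to $\ell=2$ — the paper also verifies $\ell=1$, where $\mathcal{C}_{1,m}^d=0$ for $d>0$ and $\mathcal{C}_{1,m}^0=sym(0,m)$ matches the single term coming from $\mathbf{T}_{1,m}=\{[0,0]\}$ — and for $m\le 20$ the bound on $\ell^*$ is $d^*+2$ (e.g.\ $\ell^*=22$ when $m=1$) rather than $d^*+1$, though neither point affects the structure of the argument.
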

\begin{proof}
First suppose that $m>20$.  In this case we have $\ell^*=2$.  We need to consider the cases of $\ell=1$ and $\ell=2$.  If $\ell=2$ then $M=m(\ell+1)(\ell)/2=3m$. For $d \leq d^*$  we have
\begin{eqnarray*}
\mathcal{C}_{2,m}^d= \sum_{x \in \mathbf{D}_{2,m}^d} q^{area(x)}t^{M-d-area(x)}=\\
q^dt^{M-2d}+q^{d+1}t^{M-2d-1}+\cdots+q^{M-2d-1}t^{d+1}+q^{M-2d}t^{d}=\\
\sum_{x \in \mathbf{T}_{2,m}^d}sym\left(area(x),M-d-area(x)\right)
\end{eqnarray*}
The first equality is the definition.  The second equality is claim \ref{stringsym}.  The third equality follows from the fact that $\mathbf{T}_{2,m}^d=\{[0,0,d]\}$ for $d < m$ and the fact that $area([0,0,d])=d$.  Therefore equation \ref{toshow} holds for $\ell=2$ for all $d \leq d^*$  for $m>20$.  

On the other hand, if $\ell=1$ then $M=m$ and we have $\mathbf{D}_{1,m}=\{[0,0],[0,1],\ldots,[0,m]\}$. Since $degr([0,i])=0$ and $area([0,i])=i$ it follows that $\mathcal{C}_{1,m}^d=0$ if $d>0$ and
\begin{eqnarray*}
\mathcal{C}_{1,m}^0= q^0t^m+q^1t^{m-1}+\cdots+q^{m-1}t^1+q^mt^0=sym(0,m)
\end{eqnarray*}
Since $\mathbf{T}_{1,m}=\{[0,0]\}$ and $area([0,0])=0=degr([0,0])$ it follows that   equation \ref{toshow} holds for  $\ell=2$ for all $d \leq d^*$  for $m>20$.  

This proves the lemma in the case that $m>20$.  Therefore it remains only to check the lemma in the case that $m \in [1,20]$.  But in this case the parameters $m$ and $d$ are both bounded and $\ell^*$ is also finite by construction.  Thus there are only a finite number of values of $(\ell,m,d)$ to check.  This is done in appendix \ref{comp}.  
\end{proof}

\begin{theorem}
Suppose that $n=(\ell+1)m+1$ for some $m \geq 1$. Then Conjecture \ref{con} holds for all $d \leq 20$.
\end{theorem}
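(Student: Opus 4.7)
The plan is to split into the cases $\ell \leq \ell^*$ and $\ell > \ell^*$, where $\ell^*$ is the minimal integer with $(\ell^*-1)m > 20$, and to reduce the latter to the former via the extendibility machinery of claim \ref{ext}.

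For $\ell \leq \ell^*$ the conjecture is exactly the content of Lemma \ref{3}, so nothing further is needed. To handle $\ell > \ell^*$, I would first verify $\mathtt{extendable}(\ell^*, m, d) = \mathtt{true}$ for every $d \leq 20$. When $m > 20$ we have $\ell^* = 2$; in this range claim \ref{equalstring} yields $\mathbf{D}_{2,m}^d = \mathtt{string}([d])$, so $\mathbf{N}_{2,m}^d = \emptyset$, and Lemma \ref{1} combined with claim \ref{maxarea} forces the terminal element of this single string to have area exactly $3m-2d = M - |[d]| - h([d])$, so the empty choice of $\mathtt{ext}([d])$ works. When $m \leq 20$ the claim is a finite check, performed in Appendix A. Once this base case is in place, claim \ref{ext} propagates extendability to every $\ell > \ell^*$ with $d \leq 20$, producing a disjoint-union decomposition
\[
\mathbf{D}_{\ell,m}^d = \bigsqcup_{\lambda \in \mathbf{P}_{\ell-1}^d} \bigl(\mathtt{string}(\lambda) \cup \mathtt{ext}(\lambda)\bigr).
\]

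Next I would read off the area statistic piece by piece. By claims \ref{f} and \ref{g} the initial element $v_0 = f(g(\lambda))$ has area $w^{\ell-1}(\lambda)$; the area then increments by one along the string (claim \ref{rightcheck}) and along the extension (by the definition of $\mathtt{extendable}$), so the $\lambda$-block consists of paths whose areas exhaust the integer interval from $w^{\ell-1}(\lambda)$ up to $M - d - h(\lambda)$. Writing $W = w^{\ell-1}(\lambda)$ and $H = h(\lambda)$, the $\lambda$-contribution to $\mathcal{C}_{\ell,m}^d$ is therefore
\[
\sum_{i = W}^{M - d - H} q^i\, t^{M-d-i},
\]
while the $sym$-summand predicted by Conjecture \ref{con} for the same $\lambda$ is $sym(W, M-d-W) = \sum_{i=W}^{M-d-W} q^i t^{M-d-i}$. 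The two expressions differ by the signed sum $\varepsilon_\lambda \sum_{j=\min(W,H)}^{\max(W,H)-1} q^{M-d-j} t^j$, with $\varepsilon_\lambda = +1$ when $W > H$ and $\varepsilon_\lambda = -1$ when $W < H$ (and the two expressions agree when $W = H$).

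Transporting the involution $\iota$ of claim \ref{i} through the bijection $g$ of claim \ref{g} gives an involution $\bar\iota$ on $\mathbf{P}_{\ell-1}^d$ that swaps $W$ and $H$ while preserving $d$; under $\bar\iota$ the sign $\varepsilon$ flips while the range $[\min(W,H), \max(W,H)-1]$ is preserved, so the discrepancies for $\lambda$ and $\bar\iota(\lambda)$ cancel pairwise. Summing over $\lambda$ then gives
\[
\mathcal{C}_{\ell,m}^d = \sum_{\lambda \in \mathbf{P}_{\ell-1}^d} sym\bigl(w^{\ell-1}(\lambda), M - d - w^{\ell-1}(\lambda)\bigr),
\]
and the composite bijection $f \circ g : \mathbf{P}_{\ell-1}^d \to \mathbf{T}_{\ell,m}^d = \mathbf{T}_{n/(\ell+1)}^d$ (via the identifications made just before Conjecture \ref{con}) sends $w^{\ell-1}(\lambda)$ to $area(f(g(\lambda)))$, turning the right-hand side into precisely the formula of Conjecture \ref{con}.

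The principal obstacle is establishing $\mathtt{extendable}(\ell^*, m, d)$ at the base of the induction: for $m > 20$ this is structural via Lemma \ref{1}, claim \ref{maxarea}, and claim \ref{equalstring}, but for $m \leq 20$ it is a bounded computation whose cost is controlled only by $d^* = 20$, and is performed in Appendix A.
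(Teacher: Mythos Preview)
Your overall architecture matches the paper's: split at $\ell^*$, quote Lemma~\ref{3} for $\ell\le\ell^*$, establish $\mathtt{extendable}(\ell^*,m,d)$, propagate by Claim~\ref{ext}, read off the $str$-decomposition, and use the involution $g^{-1}\iota g$ to pass to the $sym$-form. Your discrepancy-cancellation computation is exactly the content of the second half of Remark~\ref{rem}, just written out termwise.

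There is one real gap. For $m\le 20$ you assert that $\mathtt{extendable}(\ell^*,m,d)$ is ``a finite check, performed in Appendix~A.'' It is not: Appendix~A verifies only Lemmas~\ref{1} and~\ref{3}. The paper does not check extendability computationally; it \emph{deduces} it from those two lemmas via the same involution/permutation trick you later use at general $\ell$. Concretely: Lemma~\ref{3} together with $f\circ g$ and Remark~\ref{rem} gives $\mathcal{C}_{\ell^*,m}^d=\sum_\lambda str(w^{\ell^*-1}(\lambda),\,M-d-h(\lambda),\,M-d)$, which pins down the area-multiset of $\mathbf{D}_{\ell^*,m}^d$. Claim~\ref{connected} gives the canonical strings over $\mathbf{C}_{\ell^*,m}^d$, and Lemma~\ref{1} says each canonical string already reaches area $\ge M-d-h(\lambda)$; comparing total sizes forces the canonical strings to account for the full target intervals, so the choice of $\mathtt{ext}(\lambda)$ from $\mathbf{N}_{\ell^*,m}^d$ exists. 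You need this argument (or an explicit replacement computation) to close the $m\le 20$ case; as written, your proof simply asserts a step the appendix does not perform.

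Your treatment of the $m>20$ case, using $\ell^*=2$ together with Claims~\ref{equalstring} and~\ref{maxarea} to get $\mathbf{N}_{2,m}^d=\emptyset$ directly, is actually cleaner than the paper's uniform argument there; but the uniform argument is precisely what you are missing for small $m$.
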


\begin{proof}
Let $d^*=20$ and choose $\ell^*$ minimal such that $(\ell^*-1)m>d^*$. First let us define:
\begin{eqnarray*}
str(a,b,c)=q^a t^{c-a}+ q^{a+1} t^{c-a-1}+\cdots+ q^{b-1} t^{c-b+1} + q^b t^{c-b}
\end{eqnarray*}

\begin{remark} \label{rem}
Note that if $a < b$ then $str(a,b,a+b)=sym(a,b)$.  Moreover, note that if for some index set $I$ we have $a_i < c/2 < b_i$ for $i \in I$ then for any permutation, $\sigma$, of the index set we have:
\begin{eqnarray*}
\sum_{i \in I} str(a_i,b_i,c)= \sum_{i \in I} str(a_i,b_{\sigma_i},c)
\end{eqnarray*}
\end{remark}

\begin{remark} \label{mer}
If  $\lambda \in \mathbf{P}_{\ell^-1}^d$for some $d < (\ell-1)m$ then we have $2h(\lambda)  < \frac{(\ell+1)\ell}{2}m-|\lambda|$ and $2w^{\ell-1}(\lambda)  < \frac{(\ell+1)\ell}{2}m-|\lambda|$.  This follows from the fact that
\begin{eqnarray*}
3|\lambda|=3d< 3(\ell-1)m = \\
((\ell-2) +(\ell-1) +(\ell))m \leq \\
 (1+2+\cdots+\ell^*)m = \\
 \frac{(\ell+1)\ell}{2}m \\
\end{eqnarray*}
along with the fact that  $h(\lambda) \leq |\lambda|$  and  $w^{\ell-1}(\lambda) \leq |\lambda|$.
\end{remark}

Now fix some $d \leq d^*$. We wish to evaluate $\mathtt{extendable}(\ell^*,m,d)$.

Note that since $d \leq d^* < (\ell^*-1)m$ it follows by claims \ref{g} and \ref{f} that the function $f \circ g$ is  a bijection $\mathbf{P}_{\ell^*-1}^d \rightarrow \mathbf{T}_{\ell^*,m}^d$ such that if $f \circ g(\lambda)=x$ then $area(x)=w^{\ell^*-1}(\lambda)$.  Hence by Lemma \ref{3} we have:
\begin{eqnarray*}
\mathcal{C}_{\ell^*,m}^d= \sum_{\lambda \in \mathbf{P}_{\ell^*-1}^d} sym\left(w^{\ell^*-1}(\lambda),\frac{(\ell^*+1)\ell^*}{2}-|\lambda|-w^{\ell^*-1}(\lambda) \right)
\end{eqnarray*}

By remark \ref{mer} we have $w^{\ell^*-1}(\lambda)  < \frac{(\ell^*+1)\ell^*}{2}m-|\lambda|-w^{\ell^*-1}(\lambda) $ so the first part of remark \ref{rem} implies that we may instead write:

\begin{eqnarray*}
\mathcal{C}_{\ell^*,m}^d= \sum_{\lambda \in \mathbf{P}_{\ell^*-1}^d} str\left(w^{\ell^*-1}(\lambda),\frac{(\ell^*+1)\ell^*}{2}-|\lambda|-w^{\ell^*-1}(\lambda), \frac{(\ell^*+1)\ell^*}{2}-|\lambda|\right)
\end{eqnarray*}
Since $d < (\ell^*-1)m$ we may combine claims \ref{g} and \ref{i} and consider the involution $g^{-1}\circ \iota \circ g$ on $\mathbf{P}_{\ell^*-1}^d$ which has the property that if $g^{-1}( \iota ( g (\lambda)))=\mu$ then $w^{\ell^*-1}(\lambda)=h(\mu)$.  From this and the inequalities of remark \ref{mer} and the second part of remark \ref{rem} it follows that:
\begin{eqnarray*}
\mathcal{C}_{\ell^*,m}^d= \sum_{\lambda \in \mathbf{P}_{\ell^*-1}^d} str\left(w^{\ell^*-1}(\lambda),\frac{(\ell^*+1)\ell^*}{2}-|\lambda|-h(\lambda), \frac{(\ell^*+1)\ell^*}{2}-|\lambda|\right)
\end{eqnarray*}
This implies that there is a way to write
\begin{eqnarray*}
\mathbf{D}_{\ell^*,m}^d= \bigcup_{\lambda \in \mathbf{P}_{\ell^*-1}^d} string(\lambda)
\end{eqnarray*}
 where for each $\lambda$ we have that if $string(\lambda)=(u_0,\ldots,u_e)$ then $area(u_0)=w^{\ell-1}(\lambda)$, while $area(u_{i+1})=area(u_i)+1$ and $area(u_e)=\frac{(\ell^*+1)\ell^*}{2}-|\lambda|-h(\lambda)$.
Now by claim \ref{connected} we have
\begin{eqnarray*}
\mathbf{C}_{\ell^*,m}^d= \bigcup_{\lambda \in \mathbf{P}_{\ell^*-1}^d} \mathtt{string}(\lambda)
\end{eqnarray*}
 and since Lemma \ref{1} states that $area(\mathtt{lowest}(f \circ g(\lambda))) \geq \frac{(\ell^*+1)(\ell^*)}{2}-|\lambda|-h(\lambda)$ for all $\lambda \in \mathbf{P}_{\ell^*-1}^d$  we may assume that $string(\lambda)=(v_0,\ldots,v_s,w_{s+1},\ldots,w_e)$ where $\mathtt{string}(\lambda)=(v_0,\ldots,v_s)$ and all $w_i \in \mathbf{N}_{\ell^*,m}^d$.  
Thus $\mathtt{extendable}(\ell^*,m,d)=\mathtt{true}$.  

Since we have shown that $\mathtt{extendable}(\ell^*,m,d)=\mathtt{true}$ for any $d \leq d^*$ we see from claim \ref{ext} that for all $\ell \geq \ell^*$ and $d \leq d^*$ we have $\mathtt{extendable}(\ell,m,d)=\mathtt{true}$.  From this it follows that for any $\ell \geq \ell^*$ and $d \leq d^*$ we have:

\begin{eqnarray*}
\mathcal{C}_{\ell,m}^d= \sum_{\lambda \in \mathbf{P}_{\ell-1}^d} str\left(w^{\ell-1}(\lambda),\frac{(\ell+1)\ell}{2}-|\lambda|-h(\lambda), \frac{(\ell+1)\ell}{2}-|\lambda|\right)   \\
\end{eqnarray*}
Now $d < (\ell^*-1)m \leq (\ell-1)m$ so $g^{-1}\circ \iota \circ g$ is an involution on $\mathbf{P}_{\ell-1}^d$ such that if $g^{-1}( \iota ( g (\lambda)))=\mu$ then $w^{\ell-1}(\lambda)=h(\mu)$ so again applying the second part of remark \ref{rem} in light of the inequalities of remark \ref{mer} we have:
\begin{eqnarray*}
\mathcal{C}_{\ell,m}^d= \sum_{\lambda \in \mathbf{P}_{\ell-1}^d} str\left(w^{\ell-1}(\lambda),\frac{(\ell+1)\ell}{2}-|\lambda|-w^{\ell-1}(\lambda), \frac{(\ell+1)\ell}{2}-|\lambda|\right)\\
= \sum_{\lambda \in \mathbf{P}_{\ell-1}^d} sym\left(w^{\ell-1}(\lambda),\frac{(\ell+1)\ell}{2}-|\lambda|-w^{\ell-1}(\lambda)\right)\\
\end{eqnarray*}
Finally  using the bijection $f \circ g: \mathbf{P}_{\ell-1}^d \rightarrow \mathbf{T}_{\ell,m}^d$ we have:
\begin{eqnarray*}
\mathcal{C}_{\ell,m}^d= \sum_{x \in \mathbf{T}_{\ell,m}^d} sym\left(area(x),\frac{(\ell+1)\ell}{2}-d-area(x)\right)\\
\end{eqnarray*}
We have shown this equation holds for all $\ell \geq \ell^*$ and $d \leq d^*$.  Since this equation also holds for all $\ell \leq \ell^*$ and $d \leq d^*$ by Lemma \ref{3} this completes the proof.

\end{proof}

\begin{example}\label{conex}
The elements of $\mathbf{D}_{13/8}^{19}$ are shown on the next page diagramatically.   For each $x \in \mathbf{D}_{13/8}^{19}$ the value of $area(x)$ is written inside the diagram of $x$.  This is equal to the number of unshaded boxes.   The  maximum possible area for any path is given by  $M=1+3+4+6+8+9+11=42$.  By the definition of $\mathcal{C}_{13/8}^{19}$ we may compute that:
\begin{eqnarray*}
\mathcal{C}_{13/8}^{19}=q^8t^{15}+3q^9t^{14}+6q^{10}t^{13}+8q^{11}t^{12}+8q^{12}t^{11}+6q^{13}t^{10}+3q^{14}t^9+q^{15}t^8
\end{eqnarray*}
The elements of $\mathbf{T}_{13/8}^{19}$ are those whose paths come within $1/8$ of the bounding diagonal.  These are specified above by a red circle where this occurs.  Therefore the conjecture states that:
\begin{align*}
\mathcal{C}_{13/8}^{19}&=\\
q^8t^{15}+q^9t^{14}+q^{10}t^{13}+q^{11}t^{12}&+q^{12}t^{11}+q^{13}t^{10}+q^{14}t^9+q^{15}t^8\\
+2q^9t^{14}+2q^{10}t^{13}+2q^{11}t^{12}&+2q^{12}t^{11}+2q^{13}t^{10}+2q^{14}t^9\\
+4q^{10}t^{13}+4q^{11}t^{12}&+4q^{12}t^{11}+4q^{13}t^{10}\\
+4q^{11}t^{12}&+4q^{12}t^{11}\\
+&4(0)\\
-2q^{11}t^{12}&-2q^{12}t^{11}\\
-q^{10}t^{13}-q^{11}t^{12}&-q^{12}t^{11}-q^{13}t^{10}\\
\end{align*}
Comparing the two expressions shows that the conjecture is correct in this case.

\newpage
\begin{multicols}{8}
\noindent \begin{tikzpicture}[scale=0.17] 
 \draw (0, 0)--(8, 13)--(8,0)--(0,0); 
 \draw[step=1cm,gray] (1,0) grid (8,1); 
 \draw[step=1cm,gray] (2,1) grid (8,3); 
 \draw[step=1cm,gray] (3,3) grid (8,4); 
 \draw[step=1cm,gray] (4,4) grid (8,6); 
 \draw[step=1cm,gray] (5,6) grid (8,7); 
 \draw[step=1cm,gray] (6,7) grid (8,9); 
 \draw[step=1cm,gray] (7,9) grid (8,10); 
 \filldraw[fill=blue!40!white, draw=black](0,0)--(0,0)--(1,0)--(1,1)--(2,1)--(2,1)--(3,1)--(3,1)--(4,1)--(4,4)--(5,4)--(5,8)--(6,8)--(6,8)--(7,8)--(7,11)--(8,11)--(8,13)--(8,0)--(0,0);
 \filldraw[black] (4.5,2) circle (0.1pt) node[anchor=west]{$8$}; 
  \draw[red] (5,8) circle (15pt); 
\end{tikzpicture}
\vfill\null
\columnbreak
\noindent
\begin{tikzpicture}[scale=0.17] 
 \draw (0, 0)--(8, 13)--(8,0)--(0,0); 
 \draw[step=1cm,gray] (1,0) grid (8,1); 
 \draw[step=1cm,gray] (2,1) grid (8,3); 
 \draw[step=1cm,gray] (3,3) grid (8,4); 
 \draw[step=1cm,gray] (4,4) grid (8,6); 
 \draw[step=1cm,gray] (5,6) grid (8,7); 
 \draw[step=1cm,gray] (6,7) grid (8,9); 
 \draw[step=1cm,gray] (7,9) grid (8,10); 
 \filldraw[fill=blue!40!white, draw=black](0,0)--(0,0)--(1,0)--(1,1)--(2,1)--(2,1)--(3,1)--(3,1)--(4,1)--(4,3)--(5,3)--(5,8)--(6,8)--(6,8)--(7,8)--(7,11)--(8,11)--(8,13)--(8,0)--(0,0);
 \filldraw[black] (4.5,2) circle (0.1pt) node[anchor=west]{9}; 
  \draw[red] (5,8) circle (15pt); 
\end{tikzpicture}
\begin{tikzpicture}[scale=0.17] 
 \draw (0, 0)--(8, 13)--(8,0)--(0,0); 
 \draw[step=1cm,gray] (1,0) grid (8,1); 
 \draw[step=1cm,gray] (2,1) grid (8,3); 
 \draw[step=1cm,gray] (3,3) grid (8,4); 
 \draw[step=1cm,gray] (4,4) grid (8,6); 
 \draw[step=1cm,gray] (5,6) grid (8,7); 
 \draw[step=1cm,gray] (6,7) grid (8,9); 
 \draw[step=1cm,gray] (7,9) grid (8,10); 
 \filldraw[fill=blue!40!white, draw=black](0,0)--(0,0)--(1,0)--(1,1)--(2,1)--(2,1)--(3,1)--(3,2)--(4,2)--(4,2)--(5,2)--(5,8)--(6,8)--(6,8)--(7,8)--(7,11)--(8,11)--(8,13)--(8,0)--(0,0);
 \filldraw[black] (4.5,2) circle (0.1pt) node[anchor=west]{9}; 
  \draw[red] (5,8) circle (15pt); 
\end{tikzpicture}
\begin{tikzpicture}[scale=0.17] 
 \draw (0, 0)--(8, 13)--(8,0)--(0,0); 
 \draw[step=1cm,gray] (1,0) grid (8,1); 
 \draw[step=1cm,gray] (2,1) grid (8,3); 
 \draw[step=1cm,gray] (3,3) grid (8,4); 
 \draw[step=1cm,gray] (4,4) grid (8,6); 
 \draw[step=1cm,gray] (5,6) grid (8,7); 
 \draw[step=1cm,gray] (6,7) grid (8,9); 
 \draw[step=1cm,gray] (7,9) grid (8,10); 
 \filldraw[fill=blue!40!white, draw=black](0,0)--(0,0)--(1,0)--(1,1)--(2,1)--(2,3)--(3,3)--(3,3)--(4,3)--(4,4)--(5,4)--(5,4)--(6,4)--(6,7)--(7,7)--(7,11)--(8,11)--(8,13)--(8,0)--(0,0);
 \filldraw[black] (4.5,2) circle (0.1pt) node[anchor=west]{9}; 
  \end{tikzpicture}
\vfill\null
\columnbreak
\noindent
\begin{tikzpicture}[scale=0.17] 
 \draw (0, 0)--(8, 13)--(8,0)--(0,0); 
 \draw[step=1cm,gray] (1,0) grid (8,1); 
 \draw[step=1cm,gray] (2,1) grid (8,3); 
 \draw[step=1cm,gray] (3,3) grid (8,4); 
 \draw[step=1cm,gray] (4,4) grid (8,6); 
 \draw[step=1cm,gray] (5,6) grid (8,7); 
 \draw[step=1cm,gray] (6,7) grid (8,9); 
 \draw[step=1cm,gray] (7,9) grid (8,10); 
 \filldraw[fill=blue!40!white, draw=black](0,0)--(0,0)--(1,0)--(1,0)--(2,0)--(2,0)--(3,0)--(3,0)--(4,0)--(4,4)--(5,4)--(5,8)--(6,8)--(6,9)--(7,9)--(7,11)--(8,11)--(8,13)--(8,0)--(0,0);
 \filldraw[black] (4.5,2) circle (0.1pt) node[anchor=west]{10}; 
  \draw[red] (5,8) circle (15pt); 
\end{tikzpicture}
\begin{tikzpicture}[scale=0.17] 
 \draw (0, 0)--(8, 13)--(8,0)--(0,0); 
 \draw[step=1cm,gray] (1,0) grid (8,1); 
 \draw[step=1cm,gray] (2,1) grid (8,3); 
 \draw[step=1cm,gray] (3,3) grid (8,4); 
 \draw[step=1cm,gray] (4,4) grid (8,6); 
 \draw[step=1cm,gray] (5,6) grid (8,7); 
 \draw[step=1cm,gray] (6,7) grid (8,9); 
 \draw[step=1cm,gray] (7,9) grid (8,10); 
 \filldraw[fill=blue!40!white, draw=black](0,0)--(0,0)--(1,0)--(1,0)--(2,0)--(2,1)--(3,1)--(3,1)--(4,1)--(4,3)--(5,3)--(5,8)--(6,8)--(6,8)--(7,8)--(7,11)--(8,11)--(8,13)--(8,0)--(0,0);
 \filldraw[black] (4.5,2) circle (0.1pt) node[anchor=west]{10}; 
  \draw[red] (5,8) circle (15pt); 
\end{tikzpicture}
\begin{tikzpicture}[scale=0.17] 
 \draw (0, 0)--(8, 13)--(8,0)--(0,0); 
 \draw[step=1cm,gray] (1,0) grid (8,1); 
 \draw[step=1cm,gray] (2,1) grid (8,3); 
 \draw[step=1cm,gray] (3,3) grid (8,4); 
 \draw[step=1cm,gray] (4,4) grid (8,6); 
 \draw[step=1cm,gray] (5,6) grid (8,7); 
 \draw[step=1cm,gray] (6,7) grid (8,9); 
 \draw[step=1cm,gray] (7,9) grid (8,10); 
 \filldraw[fill=blue!40!white, draw=black](0,0)--(0,0)--(1,0)--(1,1)--(2,1)--(2,1)--(3,1)--(3,1)--(4,1)--(4,2)--(5,2)--(5,8)--(6,8)--(6,8)--(7,8)--(7,11)--(8,11)--(8,13)--(8,0)--(0,0);
 \filldraw[black] (4.5,2) circle (0.1pt) node[anchor=west]{10}; 
  \draw[red] (5,8) circle (15pt); 
\end{tikzpicture}
\begin{tikzpicture}[scale=0.17] 
 \draw (0, 0)--(8, 13)--(8,0)--(0,0); 
 \draw[step=1cm,gray] (1,0) grid (8,1); 
 \draw[step=1cm,gray] (2,1) grid (8,3); 
 \draw[step=1cm,gray] (3,3) grid (8,4); 
 \draw[step=1cm,gray] (4,4) grid (8,6); 
 \draw[step=1cm,gray] (5,6) grid (8,7); 
 \draw[step=1cm,gray] (6,7) grid (8,9); 
 \draw[step=1cm,gray] (7,9) grid (8,10); 
 \filldraw[fill=blue!40!white, draw=black](0,0)--(0,0)--(1,0)--(1,1)--(2,1)--(2,1)--(3,1)--(3,2)--(4,2)--(4,2)--(5,2)--(5,8)--(6,8)--(6,9)--(7,9)--(7,9)--(8,9)--(8,13)--(8,0)--(0,0);
 \filldraw[black] (4.5,2) circle (0.1pt) node[anchor=west]{10}; 
  \draw[red] (5,8) circle (15pt); 
\end{tikzpicture}
\begin{tikzpicture}[scale=0.17] 
 \draw (0, 0)--(8, 13)--(8,0)--(0,0); 
 \draw[step=1cm,gray] (1,0) grid (8,1); 
 \draw[step=1cm,gray] (2,1) grid (8,3); 
 \draw[step=1cm,gray] (3,3) grid (8,4); 
 \draw[step=1cm,gray] (4,4) grid (8,6); 
 \draw[step=1cm,gray] (5,6) grid (8,7); 
 \draw[step=1cm,gray] (6,7) grid (8,9); 
 \draw[step=1cm,gray] (7,9) grid (8,10); 
 \filldraw[fill=blue!40!white, draw=black](0,0)--(0,0)--(1,0)--(1,1)--(2,1)--(2,3)--(3,3)--(3,3)--(4,3)--(4,4)--(5,4)--(5,5)--(6,5)--(6,5)--(7,5)--(7,11)--(8,11)--(8,13)--(8,0)--(0,0);
 \filldraw[black] (4.5,2) circle (0.1pt) node[anchor=west]{10}; 
  \end{tikzpicture}
\begin{tikzpicture}[scale=0.17] 
 \draw (0, 0)--(8, 13)--(8,0)--(0,0); 
 \draw[step=1cm,gray] (1,0) grid (8,1); 
 \draw[step=1cm,gray] (2,1) grid (8,3); 
 \draw[step=1cm,gray] (3,3) grid (8,4); 
 \draw[step=1cm,gray] (4,4) grid (8,6); 
 \draw[step=1cm,gray] (5,6) grid (8,7); 
 \draw[step=1cm,gray] (6,7) grid (8,9); 
 \draw[step=1cm,gray] (7,9) grid (8,10); 
 \filldraw[fill=blue!40!white, draw=black](0,0)--(0,0)--(1,0)--(1,1)--(2,1)--(2,3)--(3,3)--(3,3)--(4,3)--(4,4)--(5,4)--(5,4)--(6,4)--(6,6)--(7,6)--(7,11)--(8,11)--(8,13)--(8,0)--(0,0);
 \filldraw[black] (4.5,2) circle (0.1pt) node[anchor=west]{10}; 
  \end{tikzpicture}
\vfill\null
\columnbreak
\noindent
\begin{tikzpicture}[scale=0.17] 
 \draw (0, 0)--(8, 13)--(8,0)--(0,0); 
 \draw[step=1cm,gray] (1,0) grid (8,1); 
 \draw[step=1cm,gray] (2,1) grid (8,3); 
 \draw[step=1cm,gray] (3,3) grid (8,4); 
 \draw[step=1cm,gray] (4,4) grid (8,6); 
 \draw[step=1cm,gray] (5,6) grid (8,7); 
 \draw[step=1cm,gray] (6,7) grid (8,9); 
 \draw[step=1cm,gray] (7,9) grid (8,10); 
 \filldraw[fill=blue!40!white, draw=black](0,0)--(0,0)--(1,0)--(1,0)--(2,0)--(2,0)--(3,0)--(3,0)--(4,0)--(4,4)--(5,4)--(5,8)--(6,8)--(6,8)--(7,8)--(7,11)--(8,11)--(8,13)--(8,0)--(0,0);
 \filldraw[black] (4.5,2) circle (0.1pt) node[anchor=west]{11}; 
  \draw[red] (5,8) circle (15pt); 
\end{tikzpicture}
\begin{tikzpicture}[scale=0.17] 
 \draw (0, 0)--(8, 13)--(8,0)--(0,0); 
 \draw[step=1cm,gray] (1,0) grid (8,1); 
 \draw[step=1cm,gray] (2,1) grid (8,3); 
 \draw[step=1cm,gray] (3,3) grid (8,4); 
 \draw[step=1cm,gray] (4,4) grid (8,6); 
 \draw[step=1cm,gray] (5,6) grid (8,7); 
 \draw[step=1cm,gray] (6,7) grid (8,9); 
 \draw[step=1cm,gray] (7,9) grid (8,10); 
 \filldraw[fill=blue!40!white, draw=black](0,0)--(0,0)--(1,0)--(1,0)--(2,0)--(2,1)--(3,1)--(3,1)--(4,1)--(4,2)--(5,2)--(5,8)--(6,8)--(6,8)--(7,8)--(7,11)--(8,11)--(8,13)--(8,0)--(0,0);
 \filldraw[black] (4.5,2) circle (0.1pt) node[anchor=west]{11}; 
  \draw[red] (5,8) circle (15pt); 
\end{tikzpicture}
\begin{tikzpicture}[scale=0.17] 
 \draw (0, 0)--(8, 13)--(8,0)--(0,0); 
 \draw[step=1cm,gray] (1,0) grid (8,1); 
 \draw[step=1cm,gray] (2,1) grid (8,3); 
 \draw[step=1cm,gray] (3,3) grid (8,4); 
 \draw[step=1cm,gray] (4,4) grid (8,6); 
 \draw[step=1cm,gray] (5,6) grid (8,7); 
 \draw[step=1cm,gray] (6,7) grid (8,9); 
 \draw[step=1cm,gray] (7,9) grid (8,10); 
 \filldraw[fill=blue!40!white, draw=black](0,0)--(0,0)--(1,0)--(1,1)--(2,1)--(2,1)--(3,1)--(3,1)--(4,1)--(4,1)--(5,1)--(5,8)--(6,8)--(6,8)--(7,8)--(7,11)--(8,11)--(8,13)--(8,0)--(0,0);
 \filldraw[black] (4.5,2) circle (0.1pt) node[anchor=west]{11}; 
  \draw[red] (5,8) circle (15pt); 
\end{tikzpicture}
\begin{tikzpicture}[scale=0.17] 
 \draw (0, 0)--(8, 13)--(8,0)--(0,0); 
 \draw[step=1cm,gray] (1,0) grid (8,1); 
 \draw[step=1cm,gray] (2,1) grid (8,3); 
 \draw[step=1cm,gray] (3,3) grid (8,4); 
 \draw[step=1cm,gray] (4,4) grid (8,6); 
 \draw[step=1cm,gray] (5,6) grid (8,7); 
 \draw[step=1cm,gray] (6,7) grid (8,9); 
 \draw[step=1cm,gray] (7,9) grid (8,10); 
 \filldraw[fill=blue!40!white, draw=black](0,0)--(0,0)--(1,0)--(1,1)--(2,1)--(2,1)--(3,1)--(3,1)--(4,1)--(4,2)--(5,2)--(5,8)--(6,8)--(6,9)--(7,9)--(7,9)--(8,9)--(8,13)--(8,0)--(0,0);
 \filldraw[black] (4.5,2) circle (0.1pt) node[anchor=west]{11}; 
  \draw[red] (5,8) circle (15pt); 
\end{tikzpicture}
\begin{tikzpicture}[scale=0.17] 
 \draw (0, 0)--(8, 13)--(8,0)--(0,0); 
 \draw[step=1cm,gray] (1,0) grid (8,1); 
 \draw[step=1cm,gray] (2,1) grid (8,3); 
 \draw[step=1cm,gray] (3,3) grid (8,4); 
 \draw[step=1cm,gray] (4,4) grid (8,6); 
 \draw[step=1cm,gray] (5,6) grid (8,7); 
 \draw[step=1cm,gray] (6,7) grid (8,9); 
 \draw[step=1cm,gray] (7,9) grid (8,10); 
 \filldraw[fill=blue!40!white, draw=black](0,0)--(0,0)--(1,0)--(1,1)--(2,1)--(2,3)--(3,3)--(3,4)--(4,4)--(4,4)--(5,4)--(5,5)--(6,5)--(6,5)--(7,5)--(7,9)--(8,9)--(8,13)--(8,0)--(0,0);
 \filldraw[black] (4.5,2) circle (0.1pt) node[anchor=west]{11}; 
  \end{tikzpicture}
\begin{tikzpicture}[scale=0.17] 
 \draw (0, 0)--(8, 13)--(8,0)--(0,0); 
 \draw[step=1cm,gray] (1,0) grid (8,1); 
 \draw[step=1cm,gray] (2,1) grid (8,3); 
 \draw[step=1cm,gray] (3,3) grid (8,4); 
 \draw[step=1cm,gray] (4,4) grid (8,6); 
 \draw[step=1cm,gray] (5,6) grid (8,7); 
 \draw[step=1cm,gray] (6,7) grid (8,9); 
 \draw[step=1cm,gray] (7,9) grid (8,10); 
 \filldraw[fill=blue!40!white, draw=black](0,0)--(0,0)--(1,0)--(1,0)--(2,0)--(2,3)--(3,3)--(3,3)--(4,3)--(4,4)--(5,4)--(5,4)--(6,4)--(6,6)--(7,6)--(7,11)--(8,11)--(8,13)--(8,0)--(0,0);
 \filldraw[black] (4.5,2) circle (0.1pt) node[anchor=west]{11}; 
  \end{tikzpicture}
\begin{tikzpicture}[scale=0.17] 
 \draw (0, 0)--(8, 13)--(8,0)--(0,0); 
 \draw[step=1cm,gray] (1,0) grid (8,1); 
 \draw[step=1cm,gray] (2,1) grid (8,3); 
 \draw[step=1cm,gray] (3,3) grid (8,4); 
 \draw[step=1cm,gray] (4,4) grid (8,6); 
 \draw[step=1cm,gray] (5,6) grid (8,7); 
 \draw[step=1cm,gray] (6,7) grid (8,9); 
 \draw[step=1cm,gray] (7,9) grid (8,10); 
 \filldraw[fill=blue!40!white, draw=black](0,0)--(0,0)--(1,0)--(1,1)--(2,1)--(2,3)--(3,3)--(3,3)--(4,3)--(4,4)--(5,4)--(5,4)--(6,4)--(6,5)--(7,5)--(7,11)--(8,11)--(8,13)--(8,0)--(0,0);
 \filldraw[black] (4.5,2) circle (0.1pt) node[anchor=west]{11}; 
  \end{tikzpicture}
\begin{tikzpicture}[scale=0.17] 
 \draw (0, 0)--(8, 13)--(8,0)--(0,0); 
 \draw[step=1cm,gray] (1,0) grid (8,1); 
 \draw[step=1cm,gray] (2,1) grid (8,3); 
 \draw[step=1cm,gray] (3,3) grid (8,4); 
 \draw[step=1cm,gray] (4,4) grid (8,6); 
 \draw[step=1cm,gray] (5,6) grid (8,7); 
 \draw[step=1cm,gray] (6,7) grid (8,9); 
 \draw[step=1cm,gray] (7,9) grid (8,10); 
 \filldraw[fill=blue!40!white, draw=black](0,0)--(0,0)--(1,0)--(1,1)--(2,1)--(2,3)--(3,3)--(3,3)--(4,3)--(4,3)--(5,3)--(5,3)--(6,3)--(6,7)--(7,7)--(7,11)--(8,11)--(8,13)--(8,0)--(0,0);
 \filldraw[black] (4.5,2) circle (0.1pt) node[anchor=west]{11}; 
  \end{tikzpicture}
\vfill\null
\columnbreak
\noindent
\begin{tikzpicture}[scale=0.17] 
 \draw (0, 0)--(8, 13)--(8,0)--(0,0); 
 \draw[step=1cm,gray] (1,0) grid (8,1); 
 \draw[step=1cm,gray] (2,1) grid (8,3); 
 \draw[step=1cm,gray] (3,3) grid (8,4); 
 \draw[step=1cm,gray] (4,4) grid (8,6); 
 \draw[step=1cm,gray] (5,6) grid (8,7); 
 \draw[step=1cm,gray] (6,7) grid (8,9); 
 \draw[step=1cm,gray] (7,9) grid (8,10); 
 \filldraw[fill=blue!40!white, draw=black](0,0)--(0,0)--(1,0)--(1,0)--(2,0)--(2,0)--(3,0)--(3,0)--(4,0)--(4,3)--(5,3)--(5,8)--(6,8)--(6,8)--(7,8)--(7,11)--(8,11)--(8,13)--(8,0)--(0,0);
 \filldraw[black] (4.5,2) circle (0.1pt) node[anchor=west]{12}; 
  \draw[red] (5,8) circle (15pt); 
\end{tikzpicture}
\begin{tikzpicture}[scale=0.17] 
 \draw (0, 0)--(8, 13)--(8,0)--(0,0); 
 \draw[step=1cm,gray] (1,0) grid (8,1); 
 \draw[step=1cm,gray] (2,1) grid (8,3); 
 \draw[step=1cm,gray] (3,3) grid (8,4); 
 \draw[step=1cm,gray] (4,4) grid (8,6); 
 \draw[step=1cm,gray] (5,6) grid (8,7); 
 \draw[step=1cm,gray] (6,7) grid (8,9); 
 \draw[step=1cm,gray] (7,9) grid (8,10); 
 \filldraw[fill=blue!40!white, draw=black](0,0)--(0,0)--(1,0)--(1,0)--(2,0)--(2,1)--(3,1)--(3,1)--(4,1)--(4,1)--(5,1)--(5,8)--(6,8)--(6,8)--(7,8)--(7,11)--(8,11)--(8,13)--(8,0)--(0,0);
 \filldraw[black] (4.5,2) circle (0.1pt) node[anchor=west]{12}; 
  \draw[red] (5,8) circle (15pt); 
\end{tikzpicture}
\begin{tikzpicture}[scale=0.17] 
 \draw (0, 0)--(8, 13)--(8,0)--(0,0); 
 \draw[step=1cm,gray] (1,0) grid (8,1); 
 \draw[step=1cm,gray] (2,1) grid (8,3); 
 \draw[step=1cm,gray] (3,3) grid (8,4); 
 \draw[step=1cm,gray] (4,4) grid (8,6); 
 \draw[step=1cm,gray] (5,6) grid (8,7); 
 \draw[step=1cm,gray] (6,7) grid (8,9); 
 \draw[step=1cm,gray] (7,9) grid (8,10); 
 \filldraw[fill=blue!40!white, draw=black](0,0)--(0,0)--(1,0)--(1,1)--(2,1)--(2,1)--(3,1)--(3,1)--(4,1)--(4,1)--(5,1)--(5,8)--(6,8)--(6,9)--(7,9)--(7,9)--(8,9)--(8,13)--(8,0)--(0,0);
 \filldraw[black] (4.5,2) circle (0.1pt) node[anchor=west]{12}; 
  \draw[red] (5,8) circle (15pt); 
\end{tikzpicture}
\begin{tikzpicture}[scale=0.17] 
 \draw (0, 0)--(8, 13)--(8,0)--(0,0); 
 \draw[step=1cm,gray] (1,0) grid (8,1); 
 \draw[step=1cm,gray] (2,1) grid (8,3); 
 \draw[step=1cm,gray] (3,3) grid (8,4); 
 \draw[step=1cm,gray] (4,4) grid (8,6); 
 \draw[step=1cm,gray] (5,6) grid (8,7); 
 \draw[step=1cm,gray] (6,7) grid (8,9); 
 \draw[step=1cm,gray] (7,9) grid (8,10); 
 \filldraw[fill=blue!40!white, draw=black](0,0)--(0,0)--(1,0)--(1,1)--(2,1)--(2,1)--(3,1)--(3,1)--(4,1)--(4,2)--(5,2)--(5,8)--(6,8)--(6,8)--(7,8)--(7,9)--(8,9)--(8,13)--(8,0)--(0,0);
 \filldraw[black] (4.5,2) circle (0.1pt) node[anchor=west]{12}; 
  \draw[red] (5,8) circle (15pt); 
\end{tikzpicture}
\begin{tikzpicture}[scale=0.17] 
 \draw (0, 0)--(8, 13)--(8,0)--(0,0); 
 \draw[step=1cm,gray] (1,0) grid (8,1); 
 \draw[step=1cm,gray] (2,1) grid (8,3); 
 \draw[step=1cm,gray] (3,3) grid (8,4); 
 \draw[step=1cm,gray] (4,4) grid (8,6); 
 \draw[step=1cm,gray] (5,6) grid (8,7); 
 \draw[step=1cm,gray] (6,7) grid (8,9); 
 \draw[step=1cm,gray] (7,9) grid (8,10); 
 \filldraw[fill=blue!40!white, draw=black](0,0)--(0,0)--(1,0)--(1,0)--(2,0)--(2,3)--(3,3)--(3,3)--(4,3)--(4,4)--(5,4)--(5,4)--(6,4)--(6,5)--(7,5)--(7,11)--(8,11)--(8,13)--(8,0)--(0,0);
 \filldraw[black] (4.5,2) circle (0.1pt) node[anchor=west]{12}; 
  \end{tikzpicture}
\begin{tikzpicture}[scale=0.17] 
 \draw (0, 0)--(8, 13)--(8,0)--(0,0); 
 \draw[step=1cm,gray] (1,0) grid (8,1); 
 \draw[step=1cm,gray] (2,1) grid (8,3); 
 \draw[step=1cm,gray] (3,3) grid (8,4); 
 \draw[step=1cm,gray] (4,4) grid (8,6); 
 \draw[step=1cm,gray] (5,6) grid (8,7); 
 \draw[step=1cm,gray] (6,7) grid (8,9); 
 \draw[step=1cm,gray] (7,9) grid (8,10); 
 \filldraw[fill=blue!40!white, draw=black](0,0)--(0,0)--(1,0)--(1,1)--(2,1)--(2,3)--(3,3)--(3,3)--(4,3)--(4,4)--(5,4)--(5,4)--(6,4)--(6,4)--(7,4)--(7,11)--(8,11)--(8,13)--(8,0)--(0,0);
 \filldraw[black] (4.5,2) circle (0.1pt) node[anchor=west]{12}; 
  \end{tikzpicture}
\begin{tikzpicture}[scale=0.17] 
 \draw (0, 0)--(8, 13)--(8,0)--(0,0); 
 \draw[step=1cm,gray] (1,0) grid (8,1); 
 \draw[step=1cm,gray] (2,1) grid (8,3); 
 \draw[step=1cm,gray] (3,3) grid (8,4); 
 \draw[step=1cm,gray] (4,4) grid (8,6); 
 \draw[step=1cm,gray] (5,6) grid (8,7); 
 \draw[step=1cm,gray] (6,7) grid (8,9); 
 \draw[step=1cm,gray] (7,9) grid (8,10); 
 \filldraw[fill=blue!40!white, draw=black](0,0)--(0,0)--(1,0)--(1,1)--(2,1)--(2,3)--(3,3)--(3,4)--(4,4)--(4,4)--(5,4)--(5,4)--(6,4)--(6,5)--(7,5)--(7,9)--(8,9)--(8,13)--(8,0)--(0,0);
 \filldraw[black] (4.5,2) circle (0.1pt) node[anchor=west]{12}; 
  \end{tikzpicture}
\begin{tikzpicture}[scale=0.17] 
 \draw (0, 0)--(8, 13)--(8,0)--(0,0); 
 \draw[step=1cm,gray] (1,0) grid (8,1); 
 \draw[step=1cm,gray] (2,1) grid (8,3); 
 \draw[step=1cm,gray] (3,3) grid (8,4); 
 \draw[step=1cm,gray] (4,4) grid (8,6); 
 \draw[step=1cm,gray] (5,6) grid (8,7); 
 \draw[step=1cm,gray] (6,7) grid (8,9); 
 \draw[step=1cm,gray] (7,9) grid (8,10); 
 \filldraw[fill=blue!40!white, draw=black](0,0)--(0,0)--(1,0)--(1,0)--(2,0)--(2,3)--(3,3)--(3,3)--(4,3)--(4,3)--(5,3)--(5,3)--(6,3)--(6,7)--(7,7)--(7,11)--(8,11)--(8,13)--(8,0)--(0,0);
 \filldraw[black] (4.5,2) circle (0.1pt) node[anchor=west]{12}; 
  \end{tikzpicture}
\vfill\null
\columnbreak
\noindent
\begin{tikzpicture}[scale=0.17] 
 \draw (0, 0)--(8, 13)--(8,0)--(0,0); 
 \draw[step=1cm,gray] (1,0) grid (8,1); 
 \draw[step=1cm,gray] (2,1) grid (8,3); 
 \draw[step=1cm,gray] (3,3) grid (8,4); 
 \draw[step=1cm,gray] (4,4) grid (8,6); 
 \draw[step=1cm,gray] (5,6) grid (8,7); 
 \draw[step=1cm,gray] (6,7) grid (8,9); 
 \draw[step=1cm,gray] (7,9) grid (8,10); 
 \filldraw[fill=blue!40!white, draw=black](0,0)--(0,0)--(1,0)--(1,0)--(2,0)--(2,0)--(3,0)--(3,1)--(4,1)--(4,1)--(5,1)--(5,8)--(6,8)--(6,8)--(7,8)--(7,11)--(8,11)--(8,13)--(8,0)--(0,0);
 \filldraw[black] (4.5,2) circle (0.1pt) node[anchor=west]{13}; 
  \draw[red] (5,8) circle (15pt); 
\end{tikzpicture}
\begin{tikzpicture}[scale=0.17] 
 \draw (0, 0)--(8, 13)--(8,0)--(0,0); 
 \draw[step=1cm,gray] (1,0) grid (8,1); 
 \draw[step=1cm,gray] (2,1) grid (8,3); 
 \draw[step=1cm,gray] (3,3) grid (8,4); 
 \draw[step=1cm,gray] (4,4) grid (8,6); 
 \draw[step=1cm,gray] (5,6) grid (8,7); 
 \draw[step=1cm,gray] (6,7) grid (8,9); 
 \draw[step=1cm,gray] (7,9) grid (8,10); 
 \filldraw[fill=blue!40!white, draw=black](0,0)--(0,0)--(1,0)--(1,1)--(2,1)--(2,1)--(3,1)--(3,1)--(4,1)--(4,1)--(5,1)--(5,8)--(6,8)--(6,8)--(7,8)--(7,9)--(8,9)--(8,13)--(8,0)--(0,0);
 \filldraw[black] (4.5,2) circle (0.1pt) node[anchor=west]{13}; 
  \draw[red] (5,8) circle (15pt); 
\end{tikzpicture}
\begin{tikzpicture}[scale=0.17] 
 \draw (0, 0)--(8, 13)--(8,0)--(0,0); 
 \draw[step=1cm,gray] (1,0) grid (8,1); 
 \draw[step=1cm,gray] (2,1) grid (8,3); 
 \draw[step=1cm,gray] (3,3) grid (8,4); 
 \draw[step=1cm,gray] (4,4) grid (8,6); 
 \draw[step=1cm,gray] (5,6) grid (8,7); 
 \draw[step=1cm,gray] (6,7) grid (8,9); 
 \draw[step=1cm,gray] (7,9) grid (8,10); 
 \filldraw[fill=blue!40!white, draw=black](0,0)--(0,0)--(1,0)--(1,0)--(2,0)--(2,3)--(3,3)--(3,3)--(4,3)--(4,4)--(5,4)--(5,4)--(6,4)--(6,4)--(7,4)--(7,11)--(8,11)--(8,13)--(8,0)--(0,0);
 \filldraw[black] (4.5,2) circle (0.1pt) node[anchor=west]{13}; 
  \end{tikzpicture}
\begin{tikzpicture}[scale=0.17] 
 \draw (0, 0)--(8, 13)--(8,0)--(0,0); 
 \draw[step=1cm,gray] (1,0) grid (8,1); 
 \draw[step=1cm,gray] (2,1) grid (8,3); 
 \draw[step=1cm,gray] (3,3) grid (8,4); 
 \draw[step=1cm,gray] (4,4) grid (8,6); 
 \draw[step=1cm,gray] (5,6) grid (8,7); 
 \draw[step=1cm,gray] (6,7) grid (8,9); 
 \draw[step=1cm,gray] (7,9) grid (8,10); 
 \filldraw[fill=blue!40!white, draw=black](0,0)--(0,0)--(1,0)--(1,1)--(2,1)--(2,3)--(3,3)--(3,3)--(4,3)--(4,4)--(5,4)--(5,4)--(6,4)--(6,5)--(7,5)--(7,9)--(8,9)--(8,13)--(8,0)--(0,0);
 \filldraw[black] (4.5,2) circle (0.1pt) node[anchor=west]{13}; 
  \end{tikzpicture}
\begin{tikzpicture}[scale=0.17] 
 \draw (0, 0)--(8, 13)--(8,0)--(0,0); 
 \draw[step=1cm,gray] (1,0) grid (8,1); 
 \draw[step=1cm,gray] (2,1) grid (8,3); 
 \draw[step=1cm,gray] (3,3) grid (8,4); 
 \draw[step=1cm,gray] (4,4) grid (8,6); 
 \draw[step=1cm,gray] (5,6) grid (8,7); 
 \draw[step=1cm,gray] (6,7) grid (8,9); 
 \draw[step=1cm,gray] (7,9) grid (8,10); 
 \filldraw[fill=blue!40!white, draw=black](0,0)--(0,0)--(1,0)--(1,1)--(2,1)--(2,3)--(3,3)--(3,4)--(4,4)--(4,4)--(5,4)--(5,4)--(6,4)--(6,4)--(7,4)--(7,9)--(8,9)--(8,13)--(8,0)--(0,0);
 \filldraw[black] (4.5,2) circle (0.1pt) node[anchor=west]{13}; 
  \end{tikzpicture}
\begin{tikzpicture}[scale=0.17] 
 \draw (0, 0)--(8, 13)--(8,0)--(0,0); 
 \draw[step=1cm,gray] (1,0) grid (8,1); 
 \draw[step=1cm,gray] (2,1) grid (8,3); 
 \draw[step=1cm,gray] (3,3) grid (8,4); 
 \draw[step=1cm,gray] (4,4) grid (8,6); 
 \draw[step=1cm,gray] (5,6) grid (8,7); 
 \draw[step=1cm,gray] (6,7) grid (8,9); 
 \draw[step=1cm,gray] (7,9) grid (8,10); 
 \filldraw[fill=blue!40!white, draw=black](0,0)--(0,0)--(1,0)--(1,0)--(2,0)--(2,3)--(3,3)--(3,3)--(4,3)--(4,3)--(5,3)--(5,3)--(6,3)--(6,6)--(7,6)--(7,11)--(8,11)--(8,13)--(8,0)--(0,0);
 \filldraw[black] (4.5,2) circle (0.1pt) node[anchor=west]{13}; 
  \end{tikzpicture}
\vfill\null
\columnbreak
\noindent
\begin{tikzpicture}[scale=0.17] 
 \draw (0, 0)--(8, 13)--(8,0)--(0,0); 
 \draw[step=1cm,gray] (1,0) grid (8,1); 
 \draw[step=1cm,gray] (2,1) grid (8,3); 
 \draw[step=1cm,gray] (3,3) grid (8,4); 
 \draw[step=1cm,gray] (4,4) grid (8,6); 
 \draw[step=1cm,gray] (5,6) grid (8,7); 
 \draw[step=1cm,gray] (6,7) grid (8,9); 
 \draw[step=1cm,gray] (7,9) grid (8,10); 
 \filldraw[fill=blue!40!white, draw=black](0,0)--(0,0)--(1,0)--(1,1)--(2,1)--(2,1)--(3,1)--(3,1)--(4,1)--(4,1)--(5,1)--(5,8)--(6,8)--(6,8)--(7,8)--(7,8)--(8,8)--(8,13)--(8,0)--(0,0);
 \filldraw[black] (4.5,2) circle (0.1pt) node[anchor=west]{14}; 
  \draw[red] (5,8) circle (15pt); 
\end{tikzpicture}
\begin{tikzpicture}[scale=0.17] 
 \draw (0, 0)--(8, 13)--(8,0)--(0,0); 
 \draw[step=1cm,gray] (1,0) grid (8,1); 
 \draw[step=1cm,gray] (2,1) grid (8,3); 
 \draw[step=1cm,gray] (3,3) grid (8,4); 
 \draw[step=1cm,gray] (4,4) grid (8,6); 
 \draw[step=1cm,gray] (5,6) grid (8,7); 
 \draw[step=1cm,gray] (6,7) grid (8,9); 
 \draw[step=1cm,gray] (7,9) grid (8,10); 
 \filldraw[fill=blue!40!white, draw=black](0,0)--(0,0)--(1,0)--(1,0)--(2,0)--(2,3)--(3,3)--(3,3)--(4,3)--(4,3)--(5,3)--(5,4)--(6,4)--(6,4)--(7,4)--(7,11)--(8,11)--(8,13)--(8,0)--(0,0);
 \filldraw[black] (4.5,2) circle (0.1pt) node[anchor=west]{14}; 
  \end{tikzpicture}
\begin{tikzpicture}[scale=0.17] 
 \draw (0, 0)--(8, 13)--(8,0)--(0,0); 
 \draw[step=1cm,gray] (1,0) grid (8,1); 
 \draw[step=1cm,gray] (2,1) grid (8,3); 
 \draw[step=1cm,gray] (3,3) grid (8,4); 
 \draw[step=1cm,gray] (4,4) grid (8,6); 
 \draw[step=1cm,gray] (5,6) grid (8,7); 
 \draw[step=1cm,gray] (6,7) grid (8,9); 
 \draw[step=1cm,gray] (7,9) grid (8,10); 
 \filldraw[fill=blue!40!white, draw=black](0,0)--(0,0)--(1,0)--(1,1)--(2,1)--(2,3)--(3,3)--(3,3)--(4,3)--(4,4)--(5,4)--(5,4)--(6,4)--(6,4)--(7,4)--(7,9)--(8,9)--(8,13)--(8,0)--(0,0);
 \filldraw[black] (4.5,2) circle (0.1pt) node[anchor=west]{14}; 
  \end{tikzpicture}
\vfill\null
\columnbreak
\noindent
\begin{tikzpicture}[scale=0.17] 
 \draw (0, 0)--(8, 13)--(8,0)--(0,0); 
 \draw[step=1cm,gray] (1,0) grid (8,1); 
 \draw[step=1cm,gray] (2,1) grid (8,3); 
 \draw[step=1cm,gray] (3,3) grid (8,4); 
 \draw[step=1cm,gray] (4,4) grid (8,6); 
 \draw[step=1cm,gray] (5,6) grid (8,7); 
 \draw[step=1cm,gray] (6,7) grid (8,9); 
 \draw[step=1cm,gray] (7,9) grid (8,10); 
 \filldraw[fill=blue!40!white, draw=black](0,0)--(0,0)--(1,0)--(1,1)--(2,1)--(2,3)--(3,3)--(3,3)--(4,3)--(4,3)--(5,3)--(5,4)--(6,4)--(6,4)--(7,4)--(7,9)--(8,9)--(8,13)--(8,0)--(0,0);
 \filldraw[black] (4.5,2) circle (0.1pt) node[anchor=west]{15}; 
  \end{tikzpicture}
\end{multicols}

\end{example}

\appendix

\section{Completing the proof of Lemmas \ref{1} and \ref{3}} \label{comp}
In this appendix we give the python code needed to carry out the computations needed to complete the proofs of  Lemmas \ref{1} and \ref{3}.  We refer to ``computation 1" as the computation needed to complete Lemma \ref{1} and ``computation 2" as the computation needed to complete Lemma \ref{3}.  The code  should not take more than a few minutes to run on a personal computer.  
\lstset{language=Python}
\lstset{frame=lines}
\lstset{label={lst:code_direct}}
\lstset{basicstyle=\footnotesize}
\begin{lstlisting}
#We use position coordinates to define Dyck paths in this code

import math

#Define the basic statistics
def alpha(a,b,m):
    if a<=b:
        return(min(b-a,m))
    if a>b:
        return(min(a-b-1,m))
   
def alpha0(a,m):
    return(max(0,a-m))

def point(x,m):
    for i in range(len(x)-1,-1,-1):
        if x[i]-x[len(x)-1]>-m:
            pt=i
    return(pt)
       
def pair(x,m):
    pr=len(x)-2
    for i in range(len(x)-3,-1,-1):
        if x[i]-x[i+2]>=-m:
            pr=i
    return(pr)

#Define right and left
def right(x,m):
    pt=point(x,m)
    pr=pair(x,m)
    if pt<=pr+1 and pt<len(x)-1:
        y=x[0:pt+1]
        y+=[x[len(x)-1]+1]
        y+=x[pt+1:len(x)-1]
        return(y)
    if pt>pr+1 or pt==len(x)-1:
        return(x)
   
def left(x,m):
    pr=pair(x,m)
    if x[len(x)-1]-x[pr+1]>=-m-1:
        y=x[0:pr+1]
        y+=x[pr+2:len(x)]
        y+=[x[pr+1]-1]  
        return(y)
    if x[len(x)-1]-x[pr+1]<-m-1:
        return(x)

#Define lowest    
def lowest(x,m):
    Record=[]
    y=None
    while x!=y:
        y=x
        x=right(x,m)
        Record.append(x)
    return(x)

#Define height of A to be height of P such that f(g(P))=A
def height(x):
    greatest=0
    j=0
    for i in range(0,len(x)):
        if x[i]>=greatest:
            greatest=x[i]
            j=i
    return((greatest-1)*(len(x)-1)+j-sum(x))


           
#Generate all relevant Dyck paths: Format [m,d,[a0,a1,...,a_l]]
def generate():
    Dyck_Paths=[]
    #We need to check m=1,2,3,...,20
    for m in range(20,0,-1):
        print('generating Dyck Paths with m = ' + str(m) )
        #Compute lstar and dstar for given m
        lstar=int(math.ceil(20/m+1.001))
        dstar=20
        #Initialize A as all paths for a given m
        A=[[m,0,[0]]]
        #Initialize X as paths with current number of coordinates
        X=[[m,0,[0]]]
        while len(X[0][2])<lstar+1:
            #Initialize Y as paths of with one more coordinate than X
            Y=[]
            for i in range(0,len(X)):
                x=X[i][2]
                last=x[len(x)-1]
                #Create paths with l+1 coordinates from a path with l
                for j in range(0,last+m+1):
                    d=X[i][1]
                    #Compute increase in degr statistic
                    for k in range(1,len(x)):
                        d+=alpha(x[k],j,m)
                    d-=alpha0(j,m)
                    #If degr<=dstar record the path
                    if d<=dstar:
                        Y.append([m,d,x+[j]])
                        A.append([m,d,x+[j]])
            #Reset X to Y
            X=Y
        #Collect all A together in variable Dyck_Paths
        Dyck_Paths+=A
    return(Dyck_Paths)

#Perform computation 1 for single Dyck path of form A=[m,d,[a0,a1,...,a_l]]
def string_okay(A):
    m=A[0]
    d=A[1]
    x=A[2]
    #Compute lstar and dstar for given m
    lstar=int(math.ceil(20/m+1.001))
    dstar=20
    #If l<lstar there is nothing to check
    if len(x)<lstar+1:
        return(True)
    if len(x)==lstar+1:
        #If x is not in T there is nothing to check
        if x[1]>0:
            return(True)
        if x[1]==0:
            #Compute M, h, and b=lowest(x)
            M=m*len(x)*(len(x)-1)/2
            h=height(x)
            b=lowest(x,m)
            #Compare area(b) to its supposed bound
            if sum(b)<=M-h-d:
                return(True)
            if sum(b)>M-h-d:
                print(A)
                return(False)
           
#Check computation 1 for all Dyck Paths in 'dp'        
def all_strings_okay(dp):
    okay=True
    for v in range(0,len(dp)):
        if string_okay(dp[v])==False:
            okay=False
    return(okay)


#Sorting parameter to use later
def sort_rule(X):
    return(X[1]+X[0]/1000)

#Check computation 2 for all Dyck Paths in 'dp'
def conjecture(dp):
    okay=True
    start=0
    end=0
    while end<len(dp):
        #Group together paths
        #if they have the same value of m
        #and they have the same value of l
        while end<len(dp)-1 and dp[end][0]==dp[end+1][0] and\
            len(dp[end][2])==len(dp[end+1][2]):
            end+=1
        end+=1
        #into a variable called 'Dlm'
        Dlm=dp[start:end]
        l=len(Dlm[0][2])-1
        #The case l=0 is trivial
        if l>0:
            m=Dlm[0][0]
            M=m*(l+1)*(l)/2
            #Plus represents all positive monomials appearing in RHS of computation 2
            Plus=[]
            #Minus represents all negative monomials appearing in RHS of computation 2
            Minus=[]
            #All represents all monomials appearing in LHS of computation 2
            All=[]
            #Monomial format is [q degree, sum of q and t degree]
            for i in range(0,len(Dlm)):
                x=Dlm[i][2]
                d=Dlm[i][1]
                a=sum(x)
                #Add a monomial to All for every element of Dlm
                All.append([a,M-d])
                #Check if x is also an element of Tlm
                if x[1]==0:
                    a=sum(x)
                    if a<=M-a-d:
                        #Add monomial string to Plus if coefficients are positive
                        for j in range(a,int(M-a-d+1)):
                            Plus.append([j,M-d])
                    if M-a-d<a:
                        #Add monomial string to Minus if coefficients are negative
                        for j in range(int(M-a-d+1),a):
                            Minus.append([j,M-d])
            #Check that the union of All and Minus gives Plus
            All_Minus=All+Minus
            Plus.sort(key=sort_rule)
            All_Minus.sort(key=sort_rule)
            if Plus==All_Minus:
                print('m=' + str(m) + ', l=' + str(l) + ' is ok')
            if Plus!=All_Minus:
                okay=False
        start=end
    return(okay)



#Generate all relevant Dyck paths
DP=generate()
print(len(DP))


#Perform computation 1
str_check=all_strings_okay(DP)
print(str_check)

#Perform computation 2
con_check=conjecture(DP)
print(con_check)


\end{lstlisting}

\section{Checking Conjecture \ref{con}}
Using this python code the individual may check whether Conjecture \ref{con} holds for any pair of relatively prime numbers.   
\lstset{language=Python}
\lstset{frame=lines}
\lstset{label={lst:code_direct}}
\lstset{basicstyle=\footnotesize}
\begin{lstlisting}
#We use step coordinates to define Dyck paths in this code

import math

#Define the basic statistics
def beta(n,l,i,j,x):
    return(sum(x[i:j+1]) -n*(j-i+1)/(l+1))

#Define the basic statistics

def gamma(n,l,i,j,x):
    b=beta(n,l,i,j,x)
    if b<0:
        g=min(x[i-1],math.floor(-b))
    if b>0:
        g=min(x[i],math.floor(b))
    return(g)

#Generate all relevant Dyck paths: Format [d,(x0,x1,...,x{l-1})]
def generate(l,n):
    #Initialize X as paths with current number of coordinates
    X=[[0,[]]]
    while len(X[0][1])<l:
        #Initialize Y as paths of with one more coordinate than X
        Y=[]
        for i in range(0,len(X)):
            x=X[i][1]
            room=math.floor((len(x)+1)*n/(l+1)-sum(x))
            #Create paths with l+1 coordinates from a path with l
            for j in range(0,int(room)+1):
                d=X[i][0]
                xj=x+[j]
                #Compute increase in degr statistic
                for k in range(1,len(xj)):
                    d+=gamma(n,l,k,len(xj)-1,xj)
                #Append path to Y
                Y.append([d,xj])
        #Reset X to Y
        X=Y
    return(X)

#Sorting parameter to use later
def sort_rule(X):
    return(X[1]+X[0]/1000)

#Test the conjecture for a pair of relatively prime integers p and n.  
def conjecture(p,n):
    #Use l=p-1 to agree with notation in paper
    l=p-1
    #Find the point where the path can come within 1/(l+1) of bounding diagonal
    for q in range(0,l):
        if  round((l+1)*( (q+1)*n/(l+1)-math.floor((q+1)*n/(l+1)) ))==1:
            #Record as [horizontal coordinate, vertical coordinate]
            closest=[q,math.floor((q+1)*n/(l+1))]
    dp=generate(l,n)
    #Compute max possible area, 'M'
    M=0
    for i in range(0,l):
        M+=int(math.floor(i+1)*n/(l+1))
    #Plus represents all positive monomials appearing in RHS of conjecture
    Plus=[]
    #Plus represents all negative monomials appearing in RHS of conjecture
    Minus=[]
    #All represents all monomials appearing in LHS of conjecture
    All=[]
    #Monomial format is [q degree, sum of q and t degree]
    for i in range(0,len(dp)):
        x=dp[i][1]
        d=dp[i][0]
        #Compute area, 'a'
        a=M
        for p in range(0,len(x)):
            a-=sum(x[0:p+1])
        #Add a monomial to All for every element
        All.append([a,M-d])
        #Check if x is in T
        if sum(x[0:closest[0]+1])==closest[1]:
            if a<=M-a-d:
            #Add monomial string to Plus if coefficients are positive
                for j in range(a,int(M-a-d+1)):
                    Plus.append([j,M-d])
            if M-a-d<a:
            #Add monomial string to Minus if coefficients are negative
                for j in range(int(M-a-d+1),a):
                    Minus.append([j,M-d])  
    print(len(All))
    print(len(Plus))
    print(len(Minus))
    All_Minus=All+Minus
    Plus.sort(key=sort_rule)
    All_Minus.sort(key=sort_rule)
    #Check that the union of All and Minus gives Plus
    if Plus==All_Minus:
        okay=True
    if Plus!=All_Minus:
        okay=False
    return(okay)  

#Test the conjecture on your favorite pair of relatively prime integers
print(conjecture(12,17))
\end{lstlisting}

\section*{Funding and/or Conflicts of interests/Competing interests}
The author has no conflicts of interest or competing interests to declare.  The author was  supported by ISF 711/18 during some of this research.

\end{document}